\DeclareMathOperator{\supp}{supp}
\newtheorem{thm}{Theorem}[section]
\newtheorem{thm*}{Theorem}
\newtheorem{theorem}{Theorem}[section]
\newtheorem{theorem*}{Theorem}
\newtheorem{corollary}[thm]{Corollary}
\newtheorem{lemma}[thm]{Lemma}
\theoremstyle{definition}
\newtheorem{defn*}{Definition}
\newtheorem{definition}{Definition}[section]
\newtheorem{definition-st}{Definition}
\newtheorem{example}{Example}[section]
\newtheorem{example-st}{Example}
\newtheorem{rem-st}{Remark}
\newtheorem{remark-st}{Remark}
\numberwithin{equation}{section}
\numberwithin{figure}{section}
\title{Multiplicity of measures under factor codes and class degree joinings}
\author{Jisang Yoo}
\address{Ajou University, Suwon, South Korea}
\email{(replace X with my last name) jisang.X.ac+rs12@gmail.com}
\keywords{degree, measure, finite-to-one code, SFT, sofic subshfit, factor map}
\subjclass[2010]{Primary 37B10; Secondary 37D35} % Symbolic dynamics; thermo
\begin{document}
\maketitle
\begin{abstract} Given a finite-to-one factor code $\pi: X \to Y$ between irreducible sofic shifts and an ergodic $\nu$ on $Y$ with full support, it is known that the fiber $\pi^{-1}_*(\nu)$ has at most $d_\pi$ ergodic measures in it where $d_\pi$ is the degree of $\pi$. We introduce the notion of multiplicity for ergodic measures on $X$ (that depends on $\pi$) and we prove that $d_\pi$ is the sum of the multiplicity of $\mu$ where $\mu$ runs over the ergodic measures in $\pi^{-1}_*(\nu)$. We also build an appropriate generalization to infinite-to-one factor codes in relation to class degree and relatively maximal measures. We also define the notion of degree joining (for finite-to-one factor codes) and class degree joining (for infinite-to-one factor codes) which are the main tool for establishing our results.
\end{abstract}

\section{Introduction}

% the notion of degree of a measure, conflicts with other definition.
% establishes nontrivial results that ....

It is known that any irreducible SFT has exactly one invariant measure of maximal entropy. It is also known that as soon as we move to the relative setting (where we restrict our attention to the measures in $\pi^{-1}(\nu)$ given a factor code $\pi:X\to Y$ and an ergodic measure $\nu$ on $Y$), the realm of uniqueness is lost. The simplest well known example demonstrating this is the following example:

Let $X, Y$ be full two shifts. Define $\pi$ by $\pi(x) = y$ where $y_i = x_i + x_{i+1} \pmod 2$. The factor code $\pi$ is 2-to-1. For each $0 < p < 1$, define $\mu_p$ to be the Bernoulli measure on $X$ with probability $p$ for value $1$. Let $\mu'_{p} = \mu_{1-p}$. Then $\mu$ and $\mu'$ project to a common measure $\nu = \pi(\mu) = \pi(\mu')$ on $Y$. The two measures $\mu_p, \mu'_p$ are distinct unless $p = \frac12$. The two measures are both ergodic measures of maximal relative entropy: measures which maximize entropy within $\pi^{-1}(\nu)$.

Petersen, Quas, and Shin showed a few years ago that the number of ergodic measures of maximal relative entropy (given a (possibly infinite-to-one) factor code $\pi: X\to Y$ on an irreducible SFT and an ergodic measure $\nu$ on $Y$) is always finite \cite{PQS-MaxRelEnt}. Allahbakhshi and Quas then improved the result and obtained a conjugacy-invariant upper bound and named it class degree \cite{all2013classdegrelmaxent}. Class degree is roughly the finite number of equivalence classes, called transition classes, in the fiber $\pi^{-1}(y)$ where $y$ is some typical point in $Y$. They asked if the similar result can be established for the number of ergodic measures of maximal pressure of f, i.e., ones maximizing $h(\mu)+ \int f d\mu$ within $\pi^{-1}(\nu)$, where $f$ is some regular enough potential function on $X$. We will call this \emph{Question A}. Recently a positive answer was given for potential functions with summable variation \cite{yoo2014releqclass} and the effort to extend to other potential functions is in plan by the author, Allahbakhshi and Antonioli.

Aside from thermodynamic consideration, the notion of class degree has been shown to be interesting on its own as Allahbakhshi, Hong and Jung showed that it reveals the topological structure of infinite-to-one factor codes \cite{all2014structureclass} and they asked if each transition class over a $\nu$-generic point contains some generic point for an ergodic measure of maximal relative entropy. This question, which we will call Question B, can be answered negatively with a quick example, so we modify the question by weakening it in two says:
\begin{enumerate}
\item replace $\nu$-generic points with $\nu$-a.e. points,
\item and replace maximization within $\pi^{-1}(\nu)$ (measures of maximal relative entropy) with maximization within an equivalence class in $\pi^{-1}(\nu)$ (class maximal measures)
\end{enumerate}
and ask if each transition class can be associated with at least one class maximal ergodic measure. To give a precise formulation of this modified question, which we will call \emph{Question B'}, and answering it affirmatively and also answering its obvious generalization to maximization of pressure of $f$ is one of the our main results.

Question B' is dual to Question A', a generalization of Question A to class maximal measures. We note that the proof of the positive answer to Question A for $f$ with summable variation relies on a lemma which essentially proves the positive answer to the stronger Question A' for such $f$. Therefore, combining answers to Question A' and Question B' for $f=0$ in particular, the realm of uniqueness is restored by moving further to the setting of restricting our attention to an equivalence class in $\pi^{-1}(\nu)$. In order to establish this setting, we develop a way to join class maximal measures together called class degree joining and show that they can be joined together in a unique way and introduce the notion of class multiplicity.

We first establish specialized results for finite-to-one factor codes. The specialized results for finite-to-one factor codes enable us to prove, in some concrete example of a 5-to-1 factor code and a broad class of $\nu$, that the number of ergodic measures in $\pi^{-1}(\nu)$ is 3. Previously there has been no tools to establish examples of finite-to-one factor codes such that one can exhibit a class of measures $\nu$ on $Y$ with the property that the number of its ergodic lifts is strictly between 1 and the degree of the factor code. We also demonstrate that, given a finite-to-one factor code and an ergodic measure $\mu$ on $X$, as soon as one knows a concrete way to list all points in $\pi^{-1}(\pi(x))$ from $x$, one also has a way to list all ergodic measures in $\pi^{-1}(\pi(\mu))$ and count the number of them. Using this, we prove a nontrivial result that in an appropriate generalization of the mod 2 factor code example, the number of ergodic measures in $\pi^{-1}(\pi(\mu))$ always divides the degree of the factor code.

The problem of counting the number of ergodic measures of maximal relative entropy under an infinite-to-one factor code on an SFT and a \emph{fully supported} ergodic measure $\nu$ on $Y$ reduces to the problem of counting the same under a \emph{finite-to-one} factor code on a \emph{sofic} shift space \cite{JungYoo2015decomp}. In order to complement it, our results concern mainly the following two cases:
\begin{enumerate}
\item a finite-to-one factor code $\pi: X \to Y$ on a sofic shift space $X$ and a fully supported ergodic $\nu$ on $Y$
\item a possibly infinite-to-one factor code $\pi: X \to Y$ on an SFT $X$ and an ergodic $\nu$ on $Y$ which may not have full support
\end{enumerate}

By allowing $\nu$ to not have full support, we hope that some further structure of infinite-to-one factor codes can be studied by using measures on $Y$ that are between two opposite examples: fully supported measures and periodic orbits.

Our results reveal that there are only two causes for the number of ergodic measures of maximal relative entropy to be strictly smaller than the class degree:
\begin{enumerate}
\item A class maximal measure having a different entropy than another class maximal measure (of the same pushforward image)
\item A class multiplicity of a class maximal measure being greater than one
\end{enumerate}

% regardless of f.....

% study of irregular measures on Y is still important, because they are images of regular measures.

\section{Background}

Unless stated otherwise, a topological dynamical system here means a self homeomorphism $T: X \to X$ of a compact metric space $X$, and a shift of finite type (SFT) means a two-sided shift of finite type with finite alphabet, and a measure on $X$ means a Borel probability measure on it. A \emph{factor code} is a factor map between two shift spaces. When we say $\pi: X\to Y$ is a factor code on an SFT $X$, it is therefore assumed that $Y$ is the image of $X$ under $\pi$ (and hence a sofic shift space).
Given a factor map $\pi: (X, T) \to (Y,S)$ between topological dynamical systems, we will say a measure $\mu$ on $X$ is a \emph{lift} or a \emph{preimage} of a measure $\nu$ on $Y$ if the pushforward image of $\mu$ under $\pi$ is $\nu$, i.e., if $\pi\mu = \nu$.

%+ irreducible sofic factor triple?
%+ facts on finite-to-one for irreducible sofic?
For the section on finite-to-one factor codes, we will use the following facts.
\begin{theorem}[Theorem  8.1.19 in~\cite{LM}]\label{thm:fto-conditions}
  Let $X$ be an irreducible sofic shift and $\pi: X\to Y$ a factor code. Then the following are equivalent.
  \begin{enumerate}
  \item For every $y \in Y$, the fiber $\pi^{-1}(y)$ is countable.
  \item For every $y \in Y$, the fiber $\pi^{-1}(y)$ is finite.
  \item There is $M \in \mathbb N$ such that, for every $y \in Y$, $|\pi^{-1}(y)| \le M$.
  \item $h(X) = h(Y)$.
  \end{enumerate}
\end{theorem}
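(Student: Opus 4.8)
The plan is to prove all four conditions equivalent by reducing them to a single combinatorial property of a suitable presentation, the absence of a \emph{diamond}, and then showing that this property makes all four conditions simultaneously true (and its negation makes them simultaneously false). The implications $(3) \Rightarrow (2) \Rightarrow (1)$ are immediate, so the real content is in tying $(1)$ and $(4)$ to $(3)$. First I would reduce to a $1$-block factor code on an irreducible edge shift: take the minimal right-resolving (Fischer) presentation $(G,\mathcal{L})$ of $X$, giving an irreducible edge shift $X_G$ and a labeling $\psi \colon X_G \to X$. Because the labeling of a right-resolving presentation admits no two distinct equal-length paths from a common vertex with a common label, $\psi$ is bounded-to-one (at most $|V(G)|^2$-to-one, by the no-diamond counting of the next paragraph, so there is no circularity), whence $|\pi^{-1}(y)| \le |(\pi\circ\psi)^{-1}(y)| \le K\,|\pi^{-1}(y)|$ for a fixed $K$ and all $y$, and $h(X) = h(X_G)$. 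Thus $(1)$--$(4)$ hold for $\pi$ iff they hold for $\pi\circ\psi$, and after a higher-block recoding (preserving irreducibility, entropy, and fiber cardinalities) we may assume $\pi$ is a $1$-block code from an irreducible edge shift $X_G$ onto $Y$. Call $\pi$ a \emph{diamond} configuration when $G$ has two distinct paths of equal length sharing initial vertex, terminal vertex, and label.

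The easy half is that \emph{no diamond implies all four conditions}. With no diamond, any $Y$-word $v$ of length $N$ has at most one realizing $G$-path for each prescribed (initial, terminal) vertex pair, hence at most $|V(G)|^2$ realizing paths altogether; applying this on windows $[-N,N]$ and letting $N \to \infty$ bounds $|\pi^{-1}(y)| \le |V(G)|^2$ uniformly, which is $(3)$ and so also $(2)$ and $(1)$. The same per-word bound gives $\#\{\text{paths of length }N\} \le |V(G)|^2 \cdot \#\{Y\text{-words of length }N\}$, so $h(X) \le h(Y)$; the reverse inequality holds because $Y$ is a factor of $X$, giving $(4)$.

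It remains to show that \emph{a diamond falsifies all four conditions}, for which it suffices, by the trivial implications, to falsify $(1)$ and $(4)$. Fix distinct paths $\gamma_0 \ne \gamma_1$ from $u$ to $v$ of length $\ell$ with common label $w$, and use irreducibility to pick a return path $\delta$ from $v$ to $u$, so that $\gamma_0\delta$ and $\gamma_1\delta$ are two loops at $u$ with a common label $b$. Concatenating such loops according to an arbitrary element of $\{0,1\}^{\mathbb{Z}}$ produces uncountably many distinct preimages of the single point with label $b^\infty$, so $(1)$ fails. For $(4)$ I would pass to the Parry measure $\mu$ on $X_G$, which has entropy $h(X)$ and charges the cylinder of the diamond pattern; by the ergodic theorem the pattern then occurs at a positive frequency $\rho > 0$ of positions for $\mu$-a.e.\ path, and switching each occurrence independently between $\gamma_0$ and $\gamma_1$ leaves the label unchanged. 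Hence every label arising from a $\mu$-typical length-$N$ window has at least $2^{\rho N(1+o(1))}$ preimage paths, while atypical windows are exponentially negligible by large deviations; grouping the $e^{h(X)N(1+o(1))}$ paths by label yields at most $e^{(h(X)-c)N}$ labels for some $c > 0$, i.e.\ $h(Y) \le h(X) - c < h(X)$, so $(4)$ fails. Since every $\pi$ either has a diamond or does not, the two halves give $(1) \Leftrightarrow (2) \Leftrightarrow (3) \Leftrightarrow (4)$.

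The main obstacle is this last entropy estimate: promoting the purely local, single-diamond obstruction to a global exponential gap between $h(X)$ and $h(Y)$. The measure-theoretic route above (positive occurrence frequency for the Parry measure, independent switching, and a large-deviation bound discarding atypical windows) is the cleanest; a purely combinatorial substitute exists but needs the same renewal and irreducibility input to ensure that a positive density of the entropy-carrying paths actually encounters the diamond. Everything else — the cover reduction, the $|V(G)|^2$ bound, and the uncountable-fiber construction — is routine once the diamond dichotomy is in place.
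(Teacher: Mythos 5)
The first thing to note is that the paper does not prove this statement at all: it is quoted verbatim as background, with the proof deferred to Lind--Marcus (Theorem 8.1.19 there). So there is no in-paper argument to compare against, and your proposal has to be judged on its own and against the standard textbook proof. Your overall architecture is exactly that standard one: the trivial chain $(3)\Rightarrow(2)\Rightarrow(1)$, reduction to a $1$-block code on an irreducible edge shift via the Fischer cover (and you correctly defuse the apparent circularity, since right-resolving kills diamonds directly), and then the diamond dichotomy. The no-diamond half (the $|V(G)|^2$ fiber bound and the path-versus-word count giving $h(X)=h(Y)$) and the ``diamond $\Rightarrow\neg(1)$'' half (two distinct equal-labeled loops at a vertex, choices indexed by $\{0,1\}^{\mathbb{Z}}$, uncountable fiber over $b^\infty$) are both correct and are the classical arguments.

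The one genuine gap is where you say it is: diamond $\Rightarrow h(Y)<h(X)$. Two concrete problems. First, ``switching each occurrence independently between $\gamma_0$ and $\gamma_1$'' is not well defined when occurrences of $\gamma_0$ overlap; you must first extract a positive-density family of \emph{pairwise disjoint} occurrences (from $\rho N$ occurrences one gets at least $\rho N/\ell$ disjoint ones), which is a small repair. Second, and more seriously, ``atypical windows are exponentially negligible by large deviations'' is the crux of the whole direction and is left as a black box. Note that what you need is a \emph{counting} statement --- the number of length-$N$ paths whose frequency of $\gamma_0$-occurrences falls below $\rho'$ grows at exponential rate strictly below $h(X)$ --- not mere $\mu$-negligibility of the atypical set. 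This is true: it follows either from uniqueness of the measure of maximal entropy (a family of word-sets with counting rate $\ge h(X)-\delta$ generates invariant measures that must converge to the Parry measure, which charges $[\gamma_0]$), or from Markov-chain large deviations combined with the Gibbs bounds $\mu[p]\asymp e^{-h(X)N}$. But either route is a nontrivial theorem, so as written your proof has exactly one load-bearing step that is invoked rather than proved.

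It is worth knowing that the classical proof sidesteps this machinery entirely. Extend the diamond to two \emph{distinct} equal-labeled loops at a common vertex (as you already did for $\neg(1)$); in the power graph $G^m$ these become two parallel self-loops carrying the same label; delete one of them. Every point of the image still has a preimage in the smaller graph, because rerouting every occurrence of the deleted self-loop through the retained one preserves the label and is automatically well defined --- distinct occurrences of a single edge in a path can never overlap, so the disjointness issue evaporates. Thus the image sofic shift is unchanged while the presenting graph became a proper subgraph, and the strict inequality $h(Y)<h(X)$ follows from the fact that a proper subshift of an irreducible SFT has strictly smaller entropy (modulo minor bookkeeping with the periodicity of $G^m$). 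This replaces your ergodic-theoretic step with finitary combinatorics, and is essentially the proof in Lind--Marcus.
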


\begin{theorem}[Lemma 9.1.13 in~\cite{LM}]\label{thm:fto-and-transitive}
  Let $X$ be an irreducible sofic shift and $\pi: X\to Y$ a finite-to-one factor code. Then a point $x \in X$ is doubly transitive if and only if $\pi(x)$ is.
\end{theorem}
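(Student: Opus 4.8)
The plan is to recast double transitivity in terms of the forward and backward limit sets and then exploit that a finite-to-one code cannot lose topological entropy. Recall that the $\omega$-limit set $\omega(x) = \bigcap_{N\ge 0}\overline{\{\sigma^n x : n \ge N\}}$ is exactly the set of points approached by $\sigma^n x$ along some sequence $n \to +\infty$, and dually for the $\alpha$-limit set $\alpha(x)$ with $n \to -\infty$. Since a word $w\in B(X)$ occurs infinitely often to the right in $x$ precisely when $\omega(x)$ meets the clopen cylinder $[w]$, and the cylinders form a basis, one checks that $x$ is doubly transitive if and only if $\omega(x) = X$ and $\alpha(x) = X$. I would record this reformulation first, and I would likewise note the standard fact that $\pi(\omega(x)) = \omega(\pi(x))$ and $\pi(\alpha(x)) = \alpha(\pi(x))$: the inclusion $\subseteq$ is continuity, while $\supseteq$ follows by lifting a convergent sequence $\sigma^{n_k}\pi(x) \to z'$ to a convergent subsequence of $\sigma^{n_k} x$ using compactness of $X$.

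With this in hand the forward implication is immediate: if $x$ is doubly transitive then $\omega(\pi(x)) = \pi(\omega(x)) = \pi(X) = Y$, and likewise $\alpha(\pi(x)) = Y$, so $\pi(x)$ is doubly transitive. This direction uses only that $\pi$ is a continuous surjection.

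For the converse, suppose $y = \pi(x)$ is doubly transitive, so that $\omega(y) = \alpha(y) = Y$. The set $\omega(x)$ is closed and $\sigma$-invariant, hence a subshift of $X$, and $\pi(\omega(x)) = \omega(y) = Y$. Therefore $h(\omega(x)) \ge h(Y)$, because $\pi$ maps $\omega(x)$ onto $Y$ and factor maps do not increase entropy, while $h(\omega(x)) \le h(X)$ since $\omega(x) \subseteq X$. Because $\pi$ is finite-to-one, Theorem~\ref{thm:fto-conditions}(4) gives $h(X) = h(Y)$, so $h(\omega(x)) = h(X)$. Now I would invoke the structural fact that a proper subshift of an irreducible sofic shift has strictly smaller topological entropy than the ambient shift; this forces $\omega(x) = X$. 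Running the identical argument with $\alpha$ in place of $\omega$ gives $\alpha(x) = X$, and the reformulation then yields that $x$ is doubly transitive.

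The main obstacle is choosing the right invariant to track. The naive attempt of comparing the two-sided orbit closure $\overline{\{\sigma^n x : n\in\mathbb Z\}}$ fails: that closure can equal all of $X$ while $x$ is not doubly transitive (for instance, a point that is eventually constant on one ray but enumerates every word on the other). Passing to $\omega(x)$ and $\alpha(x)$ separately is exactly what captures ``infinitely often in each direction,'' and it is essential that these sets, not merely the orbit closure, surject onto $Y$. The remaining nontrivial input is the strict entropy drop for proper subshifts of an irreducible sofic shift; for the SFT case this is standard, and for the sofic case I would reduce to an irreducible SFT cover (or cite the sofic version directly), taking some care that the full-entropy subshift $\omega(x)$ need not itself be irreducible.
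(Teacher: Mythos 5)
Your proof is correct, and there is nothing in the paper to compare it against: the paper states this result as background, quoting it from \cite{LM} without proof. Your reformulation of double transitivity as $\omega(x)=X$ and $\alpha(x)=X$ is valid (a word occurs infinitely often to the right exactly when the closed, shift-invariant set $\omega(x)$ meets the corresponding cylinder, and a closed set meeting every cylinder is all of $X$), the identity $\pi(\omega(x))=\omega(\pi(x))$ holds by continuity plus compactness as you say, and the converse direction correctly combines $h(X)=h(Y)$ (condition (4) of Theorem~\ref{thm:fto-conditions}) with the strict entropy drop for proper subshifts of an irreducible sofic shift. That last fact is exactly the one the paper itself invokes in Lemma~\ref{lem:fto-full}, whose proof (applied to $\supp(\mu)$ rather than to $\omega(x)$) has the same skeleton as your converse; so your argument is very much in the spirit of the paper's own methods, and is essentially the standard textbook proof. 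Your closing caveat is harmless: the entropy-drop result requires irreducibility only of the ambient shift, not of the subshift $\omega(x)$, so no additional care is actually needed there beyond citing the sofic version (or passing to the minimal right-resolving cover, under which the preimage of a proper subshift is again a proper subshift).
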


\begin{theorem}[Corollary  9.1.14 in \cite{LM}]\label{thm:fto-has-degree}
  Let $X$ be an irreducible sofic shift and $\pi: X\to Y$ a finite-to-one factor code. There is $d_\pi \in \mathbb N$ such that each doubly transitive point in $Y$ has exactly $d_\pi$ pre-images. This number $d_\pi$ is called the \emph{degree} of the factor code $\pi$.
\end{theorem}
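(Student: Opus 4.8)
The plan is to identify $d_\pi$ with the minimal fiber cardinality and to show, via a \emph{magic word}, that this minimum is attained at every doubly transitive point. First I would recode $\pi$ to a $1$-block code and, if $X$ is merely sofic, pass to an irreducible SFT cover, so that $X$ may be taken to be an irreducible edge shift $X_G$ and $\pi$ an edge labeling; the value $d_\pi$ and the doubly transitive points are preserved under these reductions (for the cover one also uses Theorem~\ref{thm:fto-and-transitive}). By Theorem~\ref{thm:fto-conditions} the fiber cardinalities $|\pi^{-1}(y)|$ are bounded by some $M$, so the function $y \mapsto |\pi^{-1}(y)|$ takes finitely many positive integer values; define $d_\pi := \min_{y \in Y} |\pi^{-1}(y)|$, a minimum that is attained. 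The goal is then precisely to prove that $|\pi^{-1}(y)| = d_\pi$ for every doubly transitive $y$.

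The central device is a magic word. Fix a point $y^{0}$ with $|\pi^{-1}(y^{0})| = d_\pi$ and list its distinct preimages $x^{1}, \dots, x^{d_\pi}$. Since these are finitely many distinct points, there is $N$ for which the central blocks $x^{i}_{[-N,N]}$ are pairwise distinct; set $w := y^{0}_{[-N,N]}$. I would prove that $w$ is \emph{magic} in the sense that it has exactly $d_\pi$ preimage blocks in $X$ and that, whenever $w$ occurs in a point $y$, each of these $d_\pi$ blocks extends in exactly one way to a bi-infinite preimage of $y$, while distinct preimages of $y$ restrict to distinct blocks on the occurrence. The two ingredients here are the minimality of $d_\pi$ (which forbids $w$ from acquiring an extra preimage block, since such a block could be completed, using irreducibility, to a point with fewer than $d_\pi$ preimages) and the fact that a finite-to-one code admits no \emph{diamonds} --- no two distinct paths of $X_G$ share both endpoints and their label --- which yields the uniqueness of the extension and the resolving property.

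With the magic word in hand the theorem follows quickly. If $y$ is doubly transitive, then by definition every word of $Y$, in particular $w$, occurs in $y$; fixing one occurrence, the map sending a preimage of $y$ to its restriction on that occurrence is an injection into the $d_\pi$ preimage blocks of $w$, and conversely each such block extends to a unique bi-infinite preimage of $y$ by the magic property together with irreducibility of $X_G$. Hence $|\pi^{-1}(y)| = d_\pi$, independently of $y$ and of the chosen occurrence, which is the assertion. I expect the main obstacle to be the construction of the magic word in the previous paragraph: verifying that minimality really rules out additional preimage blocks of $w$ and that no two local threads can merge downstream or upstream requires the careful use of the no-diamond characterization of finite-to-one codes together with recurrence of occurrences of $w$ in the relevant points, and this is the step that genuinely uses the finite-to-one hypothesis rather than just the uniform bound of Theorem~\ref{thm:fto-conditions}.
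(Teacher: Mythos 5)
You should first be aware that the paper itself gives no proof of this statement: it is quoted verbatim as background from \cite{LM} (Corollary 9.1.14), so your attempt can only be compared with the standard textbook argument, which is indeed the magic-word argument you are reaching for. Your scaffolding is the right one: recode to a $1$-block code on an irreducible edge shift, handle the strictly sofic case through the minimal right-resolving cover (this is also how the paper itself passes between $X$ and $X_R$ later, in the proof of Theorem~\ref{thm:degree-joining-exists}), then produce a word with a resolving property and use the no-diamond characterization of finite-to-one codes.

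The genuine gap is in your construction of the magic word, and it is not a repairable detail but the heart of the proof. You set $d_\pi := \min_y |\pi^{-1}(y)|$, pick a minimizing point $y^{0}$, and let $w = y^{0}_{[-N,N]}$ where $N$ merely separates the finitely many preimages of $y^{0}$. This $w$ need not have only $d_\pi$ preimage blocks, and your justification --- that an extra preimage block could be completed ``to a point with fewer than $d_\pi$ preimages'' --- is backwards: an $X$-word mapping to $w$ need not extend to any point of $\pi^{-1}(y^{0})$ at all (it can simply fail to continue along $y^{0}$), and such dead blocks contradict nothing about the minimality of $|\pi^{-1}(y^{0})|$. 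Concretely, if $d_\pi = 1$ your prescription allows $N=0$, so $w$ is a single symbol, which for a non-injective degree-one code has several preimage symbols and no resolving property whatsoever. The correct construction (Lind--Marcus's) minimizes over \emph{words}, not points: $d_\pi := \min_{w,i} d_*(w,i)$, where $d_*(w,i)$ is the number of distinct symbols at coordinate $i$ seen among $X$-word preimages of $w$. The crucial feature, which has no analogue in your setup, is monotonicity: extending a word can only shrink the symbol set at a given coordinate, so every extension of a magic word is again magic; this is what drives both the upper bound (injectivity of restriction) and the lower bound (each magic symbol survives into arbitrarily long extensions along $y$, hence extends to a genuine preimage by compactness). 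Showing that your $w$, even for large $N$, is magic would in effect require already knowing that the fiber over $y^{0}$ is mutually separated --- i.e.\ Theorem~\ref{thm:fto-mut-sep}, which is itself proved via magic words --- so your route is circular. A secondary gap: injectivity of the restriction map on a \emph{single} occurrence of $w$ does not follow from no-diamonds alone, since two distinct preimages of $y$ can agree on that window and differ forever afterwards without ever closing up a diamond; the standard argument needs occurrences of the magic word arbitrarily far in both directions (which double transitivity supplies) and a counting argument across pairs of occurrences.
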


A set of points in a 1-step SFT is \emph{mutually separated} if they never occupy the same symbol at the same time. Mutual separatedness for a set of words of length $m$ in a 1-step SFT is defined similarly.

\begin{theorem}[Variation of Proposition 9.1.9 in \cite{LM}]\label{thm:fto-mut-sep}
  Let $X$ be an irreducible 1-step SFT and $\pi: X\to Y$ a finite-to-one 1-block factor code. Let $y \in Y$. Then there are at least $d_\pi$ mutually separated points in the fiber $\pi^{-1}(y)$.
\end{theorem}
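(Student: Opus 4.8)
The plan is to prove the bound first on the dense set of doubly transitive points of $Y$ and then to propagate it to an arbitrary $y$ by a compactness argument; the propagation is what turns the equality statement for doubly transitive points into the inequality ``at least $d_\pi$'' valid for all $y$, and is presumably the ``variation'' of the cited proposition. Since $Y$ is irreducible sofic, its doubly transitive points are dense, and for such a point $y_0$ the fiber $\pi^{-1}(y_0)$ has exactly $d_\pi$ elements by Theorem~\ref{thm:fto-has-degree}. Thus it will suffice to show that these $d_\pi$ preimages are pairwise mutually separated, for then every doubly transitive point already carries $d_\pi$ mutually separated preimages.

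For the propagation step I would fix an arbitrary $y\in Y$, choose doubly transitive points $y^{(n)}\to y$, and for each $n$ pick mutually separated preimages $x^{(n,1)},\dots,x^{(n,d_\pi)}$, regarding the list $(x^{(n,1)},\dots,x^{(n,d_\pi)})$ as a point of the compact space $X^{d_\pi}$. Passing to a convergent subsequence yields a limit $(x^{1},\dots,x^{d_\pi})$ whose entries are preimages of $y$, since $\pi$ is continuous and $\pi(x^{(n,k)})=y^{(n)}\to y$. The crucial observation is that mutual separatedness is preserved in the limit: for each fixed pair $k\neq l$ and each coordinate $i$ the condition $x^{k}_i\neq x^{l}_i$ depends on a single coordinate of a discrete alphabet, hence is clopen in $X^{d_\pi}$, so the set of mutually separated $d_\pi$-tuples is an intersection of clopen sets and therefore closed. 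Consequently the limit tuple is mutually separated; in particular its entries are pairwise distinct, and $y$ has at least $d_\pi$ mutually separated preimages.

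The remaining and essential case is the doubly transitive point $y_0$, and here I would argue by contradiction, supposing that two distinct preimages $x,x'$ coincide at some coordinate. First I would show that their agreement set $\{i: x_i=x'_i\}$ is an interval: if $x$ and $x'$ agreed at two coordinates $p<q$ but differed somewhere between them, then the restrictions of $x$ and $x'$ to $[p,q]$ would be two distinct words of equal length with a common $\pi$-image and common initial and terminal symbols---a diamond---and gluing copies of such a diamond along an itinerary of $Y$ would manufacture uncountably many preimages of a single point, contradicting the bounded-to-one property of Theorem~\ref{thm:fto-conditions}. Hence $x$ and $x'$ agree exactly on some interval $[s,t]$, and since $x\neq x'$ at least one endpoint is finite; taking $t<\infty$, a single symbol $\alpha=x_t=x'_t$ then admits two forward preimage continuations over $y_0|_{[t,\infty)}$ that never meet again.

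The hard part will be to derive a contradiction from this branching, and this is exactly where the degree theory behind Proposition~9.1.9 of \cite{LM} is needed. Since $y_0$ is doubly transitive the local pattern that supports the branch recurs infinitely often, and one wants to combine this recurrence with the minimality of $d_\pi=\min_{y}|\pi^{-1}(y)|$ (which is attained at $y_0$) to build a point of $Y$ with strictly fewer than $d_\pi$ preimages, for instance by rerouting one of the two coinciding rays through a rigid magic-word continuation, which is impossible. I expect this minimal-routing argument to be the main obstacle; once it is in place, the compactness step of the second paragraph delivers the theorem for every $y$.
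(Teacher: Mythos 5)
Your outer scaffolding is sound, but the proof is not complete: everything you actually establish is the easy part, and the step you defer in your final paragraph is the entire content of the theorem. For the record, the paper itself gives no proof of this statement --- it quotes it as a variation of Proposition 9.1.9 of \cite{LM} --- so the benchmark is the Lind--Marcus argument, whose engine is the magic-word theory of their Section 9.1. Your reductions are correct as far as they go: the set of mutually separated $d_\pi$-tuples is an intersection of clopen conditions, hence closed, so mutual separation survives passage to limits and your propagation from doubly transitive points (which are indeed dense in the irreducible sofic shift $Y$) to arbitrary $y$ works; likewise the diamond argument via Theorem~\ref{thm:fto-conditions} correctly shows that the agreement set $\{i : x_i = x'_i\}$ of two preimages is an interval of $\mathbb{Z}$.

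The gap is that you never prove the claim everything hinges on: that the $d_\pi$ preimages of a doubly transitive point $y_0$ are mutually separated, i.e., that the branching configuration you isolate (two preimages agreeing on an interval and disagreeing at every coordinate thereafter) cannot occur over $y_0$. You describe wanting to ``build a point of $Y$ with strictly fewer than $d_\pi$ preimages \dots by rerouting one of the two coinciding rays through a rigid magic-word continuation'' and call this the main obstacle --- but that rerouting \emph{is} the magic-word machinery of \cite{LM} (existence of a word $w$ and coordinate $i$ such that preimages of any $Y$-word containing $w$ show exactly $d_\pi$ symbols at that coordinate, together with the gluing/exchange argument that exploits the 1-step structure), and without it nothing forces a contradiction. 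Note the configuration you want to exclude is not intrinsically absurd: over non-doubly-transitive points, two preimages can perfectly well merge at a coordinate and then diverge forever, so any contradiction must use double transitivity quantitatively, which is exactly what magic words provide. Moreover, your appeal to $d_\pi = \min_y |\pi^{-1}(y)|$ being attained at $y_0$ is itself a theorem of that same theory: the paper's Theorem~\ref{thm:fto-has-degree} defines $d_\pi$ by counting fibers over doubly transitive points and says nothing about minimality over all fibers. So as written, your argument reduces the statement to unproven results of comparable depth rather than to the background facts the paper actually supplies.
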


% fact on m.r.r.?

%+ 
%+ some descriptive set theory?
\section{Measures with full support}
\begin{lemma}\label{lem:fto-full}
  Let $X$ be an irreducible sofic shift and $\pi: X\to Y$ a finite-to-one factor code. An invariant measure $\mu$ on $X$ has full support if and only if the pushforward image $\pi\mu$ has full support.
\end{lemma}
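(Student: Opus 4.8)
The plan is to reduce both implications to the set identity $\supp(\pi\mu) = \pi(\supp\mu)$ and then to use entropy to control the support. First I would establish this identity: since $\pi$ is continuous and $X$ is compact, $\pi(\supp\mu)$ is compact, hence a closed shift-invariant subset of $Y$; and because $\pi\mu(V) = \mu(\pi^{-1}V)$ for every open $V \subseteq Y$, a point $y$ lies in $\supp(\pi\mu)$ exactly when $\pi^{-1}(V)$ meets $\supp\mu$ for every neighborhood $V$ of $y$, which is equivalent to $y \in \pi(\supp\mu)$. Granting the identity, the forward implication is immediate: if $\supp\mu = X$ then $\supp(\pi\mu) = \pi(X) = Y$. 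This direction uses only surjectivity of $\pi$ and not the finite-to-one hypothesis.

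For the converse I would argue by contraposition. Suppose $\mu$ does not have full support, and set $K = \supp\mu$, a proper $T$-invariant subshift of $X$; by the identity it suffices to show $\pi(K) \neq Y$. The restriction $\pi|_K \colon K \to \pi(K)$ is again a factor code, and each of its fibers is contained in a fiber of $\pi$, so by Theorem \ref{thm:fto-conditions} they are uniformly bounded by the same $M$; thus $\pi|_K$ is finite-to-one. Since a finite-to-one factor code between subshifts preserves topological entropy (the uniform fiber bound controls the growth of admissible words), I obtain $h(K) = h(\pi(K))$.

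The heart of the matter, and the step I expect to be the main obstacle, is to rule out that a \emph{proper} subshift $K \subsetneq X$ still carries the full entropy of $X$. Here I would invoke that an irreducible sofic shift has a \emph{unique} measure of maximal entropy and that this measure has full support. If $h(K) = h(X)$, the variational principle would produce an ergodic measure on $K$ of maximal entropy for $X$, which by uniqueness must be the measure of maximal entropy of $X$; but that measure is fully supported, contradicting its being carried by $K \subsetneq X$. Hence $h(K) < h(X)$. Combining this with $h(X) = h(Y)$, which holds by Theorem \ref{thm:fto-conditions} because $\pi$ is finite-to-one on the irreducible sofic shift $X$, yields $h(\pi(K)) = h(K) < h(X) = h(Y)$, so $\pi(K)$ is a proper subshift of $Y$. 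By the identity, $\supp(\pi\mu) = \pi(K) \subsetneq Y$, so $\pi\mu$ fails to have full support. This completes the contrapositive and hence the lemma.
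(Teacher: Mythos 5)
Your proof is correct and takes essentially the same route as the paper: both rest on the identity $\supp(\pi\mu)=\pi(\supp\mu)$ plus the entropy comparison $h(\supp\mu) < h(X) = h(Y)$, with the strict inequality coming from properness of the support in an irreducible sofic shift and the equality from Theorem~\ref{thm:fto-conditions}. The only differences are cosmetic: the paper argues by contradiction rather than contraposition, cites the strict entropy drop for proper subshifts directly instead of deriving it from intrinsic ergodicity, and uses the trivial inequality $h(K)\ge h(\pi(K))$ where you invoke the (unneeded, though true) equality for the restricted finite-to-one code.
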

\begin{proof}
  The nontrivial direction is the if direction. Suppose $\pi\mu$ has full support but $\mu$ does not. Let $X_0 = \supp(\mu)$. Then $X_0$ is a proper subshift of $X$, hence $h(X_0) < h(X)$, but we also have $\pi(X_0) = \supp(\pi\mu) = Y$ and hence $h(X_0) \ge h(Y)$. Therefore, $h(X) > h(Y)$ which contradicts the assumption that $\pi: X\to Y$ is finite-to-one.
\end{proof}

\section{Relative joinings}
Given a factor map $\pi: (X,T) \to (Y,S)$ between topological dynamical systems, an \emph{$n$-fold $\pi$-relative joining} is an invariant measure $\lambda$ on $X^n$ for which the subset $X^n_\pi := \{(x^{(1)}, x^{(2)}, \dots, x^{(n)}) \in X^n : \pi(x^{(1)}) = \pi(x^{(2)}) = \dots = \pi(x^{(n)})\}$ is a full measure set. We will say that such a measure $\lambda$ is a relative joining of margins $\mu_1, \dots, \mu_n$ over image $\nu$ if $p_i\lambda = \mu_i$ for each $i$, where $p_i: X^n \to X$ is the projection to the $i$-th, and $\pi p_i \lambda = \nu$ for some $i$ (and hence for all $i$). We will say such a measure $\lambda$ is \emph{separating} if for $\lambda$-a.e. $(x^{(1)}, x^{(2)}, \dots, x^{(n)})$, the points $x^{(1)}, x^{(2)}, \dots, x^{(n)}$ are $n$ distinct points.

In the following theorem, we define and prove the existence of a degree joining, which is a particular way of joining together all ergodic pre-images of a fully supported ergodic measure on $Y$. This proof is redundant since its generalization to arbitrary finite-to-one factor maps (between general dynamical systems) is proved in one of the next sections, but it is this proof that generalizes to the similar result for infinite-to-one factor codes later (between symbolic systems).
\begin{theorem}\label{thm:degree-joining-exists}
  Let $X$ be an irreducible sofic shift and $\pi: X\to Y$ a finite-to-one factor code. Let $\nu$ be a fully supported ergodic measure on $Y$. Then there exists an ergodic $d_\pi$-fold separating relative joining over $\nu$. We will call such a joining a \emph{degree joining} over $\nu$ with respect to $\pi$.
\end{theorem}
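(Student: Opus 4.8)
The plan is to build a single invariant measure on $X^{d_\pi}$ that records all $d_\pi$ preimages of a $\nu$-typical point at once, symmetrized over the ambiguity in how those preimages are ordered, and then to pass to an ergodic component.

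First I would pin down the fibers over $\nu$-typical points. Since $X$ is irreducible sofic, so is $Y = \pi(X)$, and since $\nu$ is ergodic with full support, every cylinder of $Y$ has positive measure; applying the Birkhoff ergodic theorem to the (countably many) cylinders, and the same argument to $S^{-1}$, shows that $\nu$-a.e.\ $y$ has dense forward and backward orbit, i.e.\ is doubly transitive. By Theorem~\ref{thm:fto-has-degree} the fiber $\pi^{-1}(y)$ then consists of exactly $d_\pi$ distinct points.

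Next I would define a measure $\lambda$ on $X^{d_\pi}$ by
\[
  \int F\, d\lambda \;=\; \int_Y \frac{1}{d_\pi!}\sum_{(x^{(1)},\dots,x^{(d_\pi)})} F\bigl(x^{(1)},\dots,x^{(d_\pi)}\bigr)\, d\nu(y)
\]
for bounded Borel $F$, where for $\nu$-a.e.\ $y$ the inner sum ranges over the $d_\pi!$ orderings of the finite fiber $\pi^{-1}(y)$. Checking that the integrand is a measurable function of $y$ is the one technical point here: it follows from a measurable-uniformization (Lusin--Novikov) argument, since the restriction of $\pi$ to the preimage of the doubly transitive points is a Borel, exactly-$d_\pi$-to-one surjection between standard Borel spaces, so the set of ordered fibers is a Borel set admitting countably many Borel sections. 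Taking $F\equiv 1$ shows $\lambda$ is a probability measure; by construction it is supported on $X^{d_\pi}_\pi$ and on the set of tuples with $d_\pi$ distinct coordinates, it is invariant under the coordinate-permutation action of the symmetric group $\mathfrak{S}_{d_\pi}$ (so all margins $p_i\lambda$ agree and $\pi p_i\lambda = \nu$), and it is shift-invariant because $\pi^{-1}(Sy)=T\pi^{-1}(y)$ and $\nu$ is $S$-invariant, so that reindexing the orderings matches $\lambda$ with its pushforward under the product shift. Thus $\lambda$ is an invariant separating relative joining over $\nu$ --- except that it is typically not ergodic.

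The final and decisive step is to extract ergodicity via the ergodic decomposition $\lambda=\int \lambda_\omega\, d\rho(\omega)$. Support on $X^{d_\pi}_\pi$ and separation are full-measure conditions for $\lambda$, hence hold for $\rho$-a.e.\ component. For the image, write $\Pi=\pi\circ p_1$ on $X^{d_\pi}_\pi$; then $\nu=\Pi\lambda=\int \Pi\lambda_\omega\, d\rho$ exhibits the ergodic measure $\nu$ as an average of the ergodic measures $\Pi\lambda_\omega$, so uniqueness of the ergodic decomposition of $\nu$ forces $\Pi\lambda_\omega=\nu$, i.e.\ $\pi p_i\lambda_\omega=\nu$ for all $i$, for $\rho$-a.e.\ $\omega$. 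Any such $\lambda_\omega$ is then an ergodic $d_\pi$-fold separating relative joining over $\nu$, which is the desired degree joining. I expect the main obstacle to be precisely this passage to an ergodic component: the symmetric measure $\lambda$ is never itself ergodic, and the content of the proof is that ergodicity can be recovered while simultaneously preserving the support, separation, and (via ergodicity of $\nu$) the image constraints.
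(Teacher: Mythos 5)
Your proof is correct, but it is not the route the paper takes for Theorem~\ref{thm:degree-joining-exists}; it is essentially the argument the paper gives later for arbitrary factor maps between topological dynamical systems. The paper's own proof here stays symbolic: it recodes so that $X$ is a 1-step SFT and $\pi$ is 1-block, forms the auxiliary shift space $Z \subset X^{d_\pi}$ of mutually separated $d_\pi$-tuples with a common $\pi$-image, invokes Theorem~\ref{thm:fto-mut-sep} to see that the induced code $Z \to Y$ is onto and hence a factor code, lifts $\nu$ through $Z \to Y$ to an ergodic measure (which is automatically a degree joining), and then reduces the strictly sofic case to the SFT case via the minimal right-resolving presentation, getting separation from injectivity of a degree-1 code on doubly transitive points. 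You instead build the symmetrized ordered-fiber measure over $\nu$ and pass to an ergodic component, using extremality of the ergodic $\nu$ to preserve the image constraint and the fact that full-measure sets pass to a.e.\ ergodic component to preserve support and separation; this matches, and slightly streamlines, the paper's later proof via the canonical lift $\ell_\pi(\nu)$ and its relatively independent self-joining, where the distinctness set only has positive measure and one must select among the components, whereas your measure is carried by distinct tuples from the start so a.e.\ component works. What each approach buys: yours needs no recoding, no SFT-versus-sofic case split, and applies to any factor map whose $\nu$-a.e.\ fiber has constant finite cardinality, at the cost of a descriptive-set-theoretic measurability step (your Lusin--Novikov appeal, which the paper's general section handles with analytic sets and universal measurability); the paper's symbolic proof, which the paper itself calls redundant in view of the general one, is retained because it is the template that generalizes to infinite-to-one factor codes, where fibers are uncountable, no symmetrization over fibers is available, and the space $Z$ is replaced by the bi-transition forbidding product. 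One side remark of yours is inaccurate but harmless: the symmetrized measure can perfectly well be ergodic (e.g.\ for the mod-2 code over the image of the uniform Bernoulli measure), in which case it is itself a degree joining; the passage to an ergodic component is needed only when it is not.
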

\begin{proof}
Denote $d = d_\pi$.
We prove first for the case when $X$ is an irreducible SFT. We may assume that $\pi$ is 1-block and $X$ is 1-step.
Let $Z$ be the set of all $(x^{(1)}, x^{(2)}, \dots, x^{(d)}) \in X^d$ such that $\pi(x^{(1)}) = \pi(x^{(2)}) = \dots = \pi(x^{(d)})$ and that the points $x^{(1)}, x^{(2)}, \dots, x^{(d)}$ are $d$ distinct mutually separated points. Then $Z$ is a shift space. Theorem~\ref{thm:fto-mut-sep} implies that the obvious sliding block code $Z \to Y$ is a factor code. Therefore we can lift the ergodic $\nu$ on $Y$ to an ergodic measure $\lambda$ on $Z$ via the factor code $Z \to Y$. It is easy to check that this measure $\lambda$ is a degree joining over $\nu$.

Now suppose $X$ is strictly sofic.
Let $\pi_R: X_R \to X$ be the minimal right resolving presentation of $X$. We have $d = d_\pi = d_{\pi\circ\pi_R}$.
We may assume that $\pi$ and $\pi_R$ are 1-block codes and $X_R$ is a 1-step SFT.
Let $\lambda_R$ be a degree joining over $\nu$ with respect to $\pi\circ\pi_R$. Let $\lambda$ be the pushforward of $\lambda_R$ under the obvious sliding block code $X_R^d \to X^d$. It is easy to check that $\lambda$ is a $d$-fold relative joining over $\nu$.
To see that $\lambda$ is separating, first notice that $\nu$-a.e. $y$ is doubly transitive, because $\nu$ has full support. Hence, for $\lambda_R$-a.e. $(x^{(1)}, x^{(2)}, \dots, x^{(d)}) \in X_R^d$, each $x^{(i)}$ is doubly transitive by Theorem~\ref{thm:fto-and-transitive}, but $\pi_R$ must be injective on doubly transitive points because $\pi_R$ has degree 1, and therefore images of $x^{(1)}, x^{(2)}, \dots, x^{(d)}$ under $\pi_R$ are $d$ distinct points. Therefore $\lambda$ is separating.
\end{proof}

\begin{lemma}\label{lem:ergodic-rel-join-exists}
  Let $(X, T, \mu)$ and $(Y, S, \nu)$ be two ergodic measure preserving systems with $(Z, R, \rho)$ as its common factor. Then there is an ergodic relative joining of the two systems over the common factor, i.e., there is a measure on $X \times Y$ which is an ergodic joining of $\mu$ and $\nu$ and is a relative joining over $\rho$.
\end{lemma}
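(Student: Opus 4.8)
The plan is to first produce at least one relative joining of $\mu$ and $\nu$ over $\rho$ with the prescribed margins, and then to pass to its ergodic decomposition to extract an ergodic one. Write $\varphi: X \to Z$ and $\psi: Y \to Z$ for the two factor maps, so that $\varphi\mu = \rho = \psi\nu$, let $p_X: X\times Y \to X$ and $p_Y: X\times Y \to Y$ be the coordinate projections, and let $W := \{(x,y) \in X\times Y : \varphi(x) = \psi(y)\}$ be the fiber product, a closed $(T\times S)$-invariant set on which every relative joining over $\rho$ must concentrate.

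For the existence step I would form the relatively independent joining. Disintegrating $\mu = \int_Z \mu_z\, d\rho(z)$ and $\nu = \int_Z \nu_z\, d\rho(z)$ over the common factor, with $\mu_z$ and $\nu_z$ concentrated on $\varphi^{-1}(z)$ and $\psi^{-1}(z)$ respectively, I would set $\lambda := \int_Z (\mu_z \times \nu_z)\, d\rho(z)$. By construction $\lambda$ concentrates on $W$ and has margins $\mu$ and $\nu$; its $(T\times S)$-invariance follows from the essential uniqueness of the disintegration, which forces $T\mu_z = \mu_{Rz}$ and $S\nu_z = \nu_{Rz}$ for $\rho$-a.e.\ $z$, so that $(T\times S)\lambda = \int (\mu_{Rz}\times\nu_{Rz})\,d\rho(z) = \int(\mu_z\times\nu_z)\,d(R\rho)(z) = \lambda$. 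Thus the set of relative joinings over $\rho$ with the given margins is nonempty.

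The heart of the argument is the final step. Take any such relative joining $\lambda$ and write its ergodic decomposition $\lambda = \int \lambda_\omega\, dP(\omega)$ into $(T\times S)$-ergodic measures. Since $W$ is invariant with $\lambda(W)=1$, almost every $\lambda_\omega$ concentrates on $W$, so each such component is again supported on the fiber product, and it remains only to check its margins. Here is where I expect the one genuinely substantive point to lie: a priori the margins $p_X\lambda_\omega$ and $p_Y\lambda_\omega$ could be any invariant measures averaging to $\mu$ and $\nu$. The resolution is that each $p_X\lambda_\omega$ is the image of an ergodic measure under the factor map $p_X$, hence itself ergodic, so $\mu = \int p_X\lambda_\omega\, dP(\omega)$ is a representation of $\mu$ as an integral of ergodic measures; by the uniqueness of the ergodic decomposition together with the ergodicity of $\mu$, this forces $p_X\lambda_\omega = \mu$ for $P$-a.e.\ $\omega$, and symmetrically $p_Y\lambda_\omega = \nu$. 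Choosing any $\omega$ in the resulting common full-measure set then yields an ergodic $\lambda_\omega$ concentrated on $W$ with margins $\mu$ and $\nu$, which (since $\varphi p_X\lambda_\omega = \varphi\mu = \rho$) is precisely the desired ergodic relative joining over $\rho$.
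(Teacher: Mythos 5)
Your proof is correct and follows essentially the same route as the paper: form the relatively independent joining $\mu\otimes_\rho\nu$ and pass to its ergodic decomposition, noting that almost every ergodic component still concentrates on the fiber product and, since the margins of ergodic components are ergodic and average to the ergodic measures $\mu$ and $\nu$, must have margins exactly $\mu$ and $\nu$. The only difference is that you spell out the details (invariance of the relatively independent joining, the uniqueness-of-ergodic-decomposition argument) that the paper leaves as assertions.
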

\begin{proof}
  Let $\mu \otimes_\rho \nu$ be the relatively independent joining of the two measures over $\rho$. Since $\mu, \nu, \rho$ are ergodic, almost all measures in the ergodic decomposition of $\mu \otimes_\rho \nu$ must also have $\mu, \nu$ as their margins and $\rho$ as their image in $Z$. It is also easy to check that almost all measures in the ergodic decomposition is a relative joining.
\end{proof}

Degree joinings are universal with respect to other relative joinings over the same image in the following sense.
\begin{theorem}\label{thm:degree-joining-universal}
  Let $X$ be an irreducible sofic shift and $\pi: X\to Y$ a finite-to-one factor code. Let $\nu$ be a fully supported ergodic measure on $Y$. Let $\lambda$ be a degree joining over $\nu$ and $\lambda'$ an $n$-fold ergodic relative joining over $\nu$. Then there is a function $f: \{1,\dots, n\} \to \{1,\dots,d_\pi\}$ such that $\lambda' = p_f \lambda$ where $p_f: X^{d_\pi} \to X^n$ is the obvious map induced by $f$.
\end{theorem}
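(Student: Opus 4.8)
The plan is to realize $\lambda'$ as a pushforward of $\lambda$ by first joining the two relative joinings together over $\nu$ and then reading off the (almost surely constant) matching between their coordinates.

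First I would invoke Lemma~\ref{lem:ergodic-rel-join-exists} to produce an ergodic relative joining $\Lambda$ of the two ergodic systems $(X^{d_\pi}, \lambda)$ and $(X^n, \lambda')$ over their common factor $(Y, \nu)$; the relevant factor maps are $\pi p_i$ on each side, and both systems do project to $\nu$ precisely because $\lambda$ and $\lambda'$ are relative joinings over $\nu$. The resulting measure $\Lambda$ lives on $X^{d_\pi} \times X^n = X^{d_\pi + n}$, is ergodic, has $\lambda$ and $\lambda'$ as its two block margins, and is itself a relative joining over $\nu$; thus $\Lambda$-almost every tuple $(x^{(1)}, \dots, x^{(d_\pi)}, z^{(1)}, \dots, z^{(n)})$ has all of its $d_\pi + n$ coordinates in a single fiber $\pi^{-1}(y)$.

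Next I would exploit full support of $\nu$ together with Theorems~\ref{thm:fto-and-transitive} and~\ref{thm:fto-has-degree}: since $\nu$-a.e.\ $y$ is doubly transitive, such $y$ has exactly $d_\pi$ preimages. Because $\lambda$ is separating, the first $d_\pi$ coordinates $x^{(1)}, \dots, x^{(d_\pi)}$ are distinct $\Lambda$-a.e., and being $d_\pi$ distinct points in a fiber of size exactly $d_\pi$, they exhaust $\pi^{-1}(y)$. Consequently each $z^{(j)}$, lying in the same fiber, equals exactly one of the $x^{(i)}$, and this index is unique by distinctness. This lets me define, for each $j \in \{1, \dots, n\}$, a measurable map $f_j$ on the support of $\Lambda$ by the rule $z^{(j)} = x^{(i)}$ with $i = f_j$. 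The key step is then to show each $f_j$ is $\Lambda$-a.e.\ constant: applying the product shift sends the defining relation $z^{(j)} = x^{(f_j)}$ to $Tz^{(j)} = Tx^{(f_j)}$, and since $Tx^{(1)}, \dots, Tx^{(d_\pi)}$ remain distinct, the matching index is unchanged, so $f_j$ is shift-invariant. As $f_j$ takes finitely many values and $\Lambda$ is ergodic, it equals a constant $f(j)$ almost everywhere. This yields $f: \{1, \dots, n\} \to \{1, \dots, d_\pi\}$ with $(z^{(1)}, \dots, z^{(n)}) = p_f(x^{(1)}, \dots, x^{(d_\pi)})$ holding $\Lambda$-a.e.; pushing forward by the block projections, the $\lambda'$-margin equals $p_f$ applied to the $\lambda$-margin, i.e.\ $\lambda' = p_f\lambda$.

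I expect the main obstacle to be justifying cleanly that the matching $f_j$ is genuinely measurable and shift-invariant as a function on $X^{d_\pi+n}$, rather than merely fiber-by-fiber; everything hinges on the degree-joining block having distinct coordinates, so that ``which $x^{(i)}$ equals $z^{(j)}$'' is well defined and is preserved by the dynamics. That distinctness, in turn, is exactly what the separating property of $\lambda$ together with the full support of $\nu$ (via double transitivity and the degree count) supply.
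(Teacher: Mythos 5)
Your proposal is correct and follows essentially the same route as the paper's own proof: an ergodic relative joining of $\lambda$ and $\lambda'$ over $\nu$ via Lemma~\ref{lem:ergodic-rel-join-exists}, the observation that the separating property plus double transitivity of $\nu$-a.e.\ $y$ forces the $d_\pi$ coordinates of $\lambda$ to exhaust the fiber, and then ergodicity to make the coordinate-matching index a.e.\ constant. Your added care about measurability and shift-invariance of the matching function is a reasonable elaboration of a step the paper treats tersely, not a departure from its argument.
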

\begin{proof}
  There is an ergodic relative joining, say $\lambda''$, of $\lambda$ and $\lambda'$ over $\nu$ in the sense of the previous lemma. $\lambda''$ is a measure on $X^{d} \times X^{n}$. For $\lambda''$-a.e. $(x_1, \dots, x_d, x'_1, \dots, x'_n)$, we have that the points $x_1, \dots, x_d$ are $d$ distinct pre-images of a doubly transitive point in $Y$ (hence they are all pre-images of that point in $Y$) and that $x'_1,\dots,x'_n$ are pre-images of the same point in $Y$, and therefore in particular, the point $x'_1$ is equal to one and only point among $x_1, \dots, x_d$. Therefore there is a measurable function $g: X^{d} \times X^{n} \to \{1,\dots, d\}$ such that $$x'_1 = x_{g(x_1, \dots, x_d, x'_1, \dots, x'_n)}$$ holds for $\lambda''$-almost all $(x_1, \dots, x_d, x'_1, \dots, x'_n)$.

Since $g$ is invariant and $\lambda''$ is ergodic, $g$ must be constant $\lambda''$-a.e.. Define $f(1)$ to be this constant. Define $f(2), \dots, f(n)$ similarly. The function $f: \{1,\dots, n\} \to \{1,\dots,d\}$ defined in this way has the desired property because 
$$(x'_1,\dots, x'_n) = (x_{f(1)}, \dots, x_{f(n)}) = p_f(x_1,\dots,x_d)$$
holds for $\lambda''$-a.e. $(x_1, \dots, x_d, x'_1, \dots, x'_n)$.
\end{proof}
Since any relative joining over $\nu$ decomposes by ergodic decomposition into ergodic relative joinings over $\nu$, we have just classified all possible relative joinings over $\nu$.

Universality implies uniqueness of degree joining up to permutation as proved in the following theorem.
\begin{theorem}\label{thm:degree-joining-unique}
  Let $X$ be an irreducible sofic shift and $\pi: X\to Y$ a finite-to-one factor code. Let $\nu$ be a fully supported ergodic measure on $Y$. If $\lambda$ and $\lambda'$ are degree joinings over $\nu$, then there is a permutation $f$ of $\{1, \dots, d_\pi\}$ such that $\lambda' = p_f\lambda$.
\end{theorem}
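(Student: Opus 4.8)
The plan is to deduce this almost immediately from the universality theorem just proved. Since a degree joining is by definition an ergodic $d_\pi$-fold separating relative joining over $\nu$, the measure $\lambda'$ qualifies in particular as a $d_\pi$-fold ergodic relative joining over $\nu$. I would therefore apply Theorem~\ref{thm:degree-joining-universal} with $\lambda$ in the role of the degree joining and $\lambda'$ in the role of the arbitrary relative joining, so that the index $n$ equals $d_\pi$. This produces a function $f: \{1,\dots,d_\pi\} \to \{1,\dots,d_\pi\}$ with $\lambda' = p_f\lambda$. The entire remaining task is to upgrade this $f$ from an arbitrary function to a permutation.

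The one substantive step is to show that $f$ is injective, and for this I would argue by contradiction using the separating property of $\lambda'$. Suppose $f(i) = f(j)$ for some $i \ne j$. Then for \emph{every} point $(x_1,\dots,x_{d_\pi})$, its image $p_f(x_1,\dots,x_{d_\pi})$ has equal $i$-th and $j$-th coordinates, both equal to $x_{f(i)}$. Consequently $\lambda' = p_f\lambda$ is concentrated on the set $\{(z_1,\dots,z_{d_\pi}) : z_i = z_j\}$, so the $i$-th and $j$-th coordinates of a $\lambda'$-typical point agree. But $\lambda'$, being a degree joining, is separating, meaning that for $\lambda'$-a.e.\ point the $d_\pi$ coordinates are $d_\pi$ distinct points; in particular the $i$-th and $j$-th coordinates differ almost everywhere. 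This is a contradiction, so $f$ must be injective.

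Finally, since $f$ is an injection of the finite set $\{1,\dots,d_\pi\}$ into itself, it is automatically a bijection, hence a permutation, and the identity $\lambda' = p_f\lambda$ is exactly the desired conclusion. I do not expect any genuine obstacle here: the hard analytic work is entirely absorbed into Theorem~\ref{thm:degree-joining-universal}, and the only additional ingredient needed is the separating property, which is precisely the feature distinguishing a degree joining among all relative joinings over $\nu$ and which is what forces the \emph{a priori} arbitrary function $f$ to be injective.
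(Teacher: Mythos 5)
Your proposal is correct and follows essentially the same route as the paper's proof: invoke Theorem~\ref{thm:degree-joining-universal} to get $f$ with $\lambda' = p_f\lambda$, then use the separating property of $\lambda'$ to rule out $f(i)=f(j)$ for $i\neq j$, and conclude by finiteness that $f$ is a permutation. The only cosmetic difference is that the paper phrases the contradiction via non-surjectivity (which on a finite set forces two indices to collide) while you argue injectivity directly; the substance is identical.
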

\begin{proof}
  There is a function $f: \{1,\dots, d\} \to \{1,\dots,d\}$ such that $\lambda' = p_f \lambda$.
  Suppose that $f$ is not surjective. Without loss of generality, we may assume $f(1) = f(2) = 1$.

  For $\lambda$-a.e. $(x_1, \dots, x_d)$ we have that $p_f(x_1, \dots, x_d)$ is of the form $(x'_1, \dots, x'_d)$ with $x'_1 = x'_2$. Therefore, for $\lambda'$-a.e. $(x'_1, \dots, x'_d)$, we have $x'_1 = x'_2$ but this contradicts the assumption that $\lambda'$ is separating.
\end{proof}

Having established the uniqueness of degree joining, we now show that its margins are precisely the ergodic lifts of $\nu$.
\begin{theorem}
  Let $X$ be an irreducible sofic shift and $\pi: X\to Y$ a finite-to-one factor code. Let $\nu$ be a fully supported ergodic measure on $Y$ and $\lambda$ a degree joining over it. Then $$\{p_i \lambda: 1\le i \le d_\pi\}$$ is the set of all ergodic measures in $\pi^{-1}\nu$.
\end{theorem}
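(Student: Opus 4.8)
The plan is to derive this statement as an essentially immediate consequence of the universality theorem (Theorem~\ref{thm:degree-joining-universal}), establishing the two set inclusions separately. Throughout I write $d = d_\pi$.

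For the inclusion $\{p_i\lambda : 1 \le i \le d\} \subseteq \pi^{-1}\nu$, I would first recall that a degree joining $\lambda$ is by definition an ergodic measure on $X^{d}$, and that each coordinate projection $p_i : X^{d} \to X$ is a factor map; hence each pushforward $p_i\lambda$ is again ergodic. Moreover, since $\lambda$ is a relative joining over $\nu$, the defining condition $\pi p_i\lambda = \nu$ guarantees that each margin $p_i\lambda$ lifts $\nu$. Thus every $p_i\lambda$ is an ergodic measure in $\pi^{-1}\nu$, which gives one inclusion.

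For the reverse inclusion, let $\mu$ be an arbitrary ergodic measure on $X$ with $\pi\mu = \nu$. The key observation is that $\mu$ may be regarded as a $1$-fold ergodic relative joining over $\nu$: taking $n=1$ we have $X^1 = X$, and the constraint set $X^1_\pi = \{x \in X : \pi(x) = \pi(x)\}$ is all of $X$, so the defining full-measure condition holds trivially, while $p_1\mu = \mu$ and $\pi p_1 \mu = \nu$. I would then apply Theorem~\ref{thm:degree-joining-universal} to $\lambda' = \mu$ with $n=1$, obtaining a function $f : \{1\} \to \{1, \dots, d\}$ with $\mu = p_f\lambda = p_{f(1)}\lambda$. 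Hence $\mu$ coincides with one of the margins $p_i\lambda$, which completes the other inclusion and the proof.

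There is no substantial obstacle once universality is in hand; the only points requiring genuine (if routine) care are the verification that a single ergodic lift qualifies as a $1$-fold relative joining, so that the universality theorem applies, and the remark that the pushforward of an ergodic measure under a factor map is ergodic. I would stress that the assertion is an equality of \emph{sets}: the margins $p_i\lambda$ need not be pairwise distinct, and the coincidences among them are precisely what the notion of multiplicity, introduced later in the paper, is designed to record.
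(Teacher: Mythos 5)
Your proposal is correct and follows essentially the same route as the paper: the margins are ergodic lifts because $\lambda$ is an ergodic relative joining over $\nu$, and every ergodic lift, viewed as a $1$-fold ergodic relative joining, is a margin of $\lambda$ by the universality theorem. (In fact your citation is the more accurate one: the paper's proof points to the uniqueness theorem, an apparent typo, since the statement that applies to a $1$-fold ergodic relative joining is Theorem~\ref{thm:degree-joining-universal}.)
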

\begin{proof}
  Each margin $p_i\lambda$ is an ergodic measure on $X$ that maps to $\nu$ because $\lambda$ is an ergodic joining over $\nu$.

  Each ergodic measure in $\pi^{-1}\nu$ is a 1-fold ergodic relative joining over $\nu$ and hence Theorem~\ref{thm:degree-joining-unique} applies to it and therefore is a margin of $\lambda$.
\end{proof}

\section{Multiplicity}

\begin{definition}
  Let $X$ be an irreducible sofic shift and $\pi: X\to Y$ a finite-to-one factor code. Let $\mu$ be a fully supported ergodic measure on $X$. The \emph{multiplicity}, denoted $m_\pi(\mu)$, of $\mu$ with respect to $\pi$ is the number of times it appears as a margin in a degree joining over $\pi\mu$. In other words,
  $$m_\pi(\mu) := \#\{i : 1\le i \le d_\pi,\ p_i \lambda = \mu\}$$
  where $\lambda$ is a degree joining over $\pi\mu$.
\end{definition}
Since degree joining is unique up to permutation, the notion of multiplicity above is well defined. Our original purpose in defining this notion is to establish the following result which is trivial from the way multiplicity is defined.
\begin{theorem}
  Let $X$ be an irreducible sofic shift and $\pi: X\to Y$ a finite-to-one factor code. Let $\nu$ be a fully supported ergodic measure on $Y$. Then
  $$d_\pi = \sum_\mu m_\pi(\mu)$$
  where $\mu$ runs over all ergodic lifts of $\nu$.
\end{theorem}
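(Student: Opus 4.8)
The plan is to read the right-hand sum as nothing more than a count of the index set $\{1, \dots, d_\pi\}$, broken up according to which margin of a single degree joining each index produces. First I would invoke Theorem~\ref{thm:degree-joining-exists} to fix one degree joining $\lambda$ over $\nu$. The point that makes everything line up is that every ergodic lift $\mu$ of $\nu$ satisfies $\pi\mu = \nu$, so a degree joining over $\pi\mu$ is literally a degree joining over $\nu$. By Theorem~\ref{thm:degree-joining-unique} any two such joinings differ only by a permutation of the $d_\pi$ coordinates, and the quantity $\#\{i : p_i\lambda = \mu\}$ is plainly invariant under such a permutation. Consequently the single fixed $\lambda$ simultaneously computes $m_\pi(\mu)$ for \emph{all} ergodic lifts $\mu$ of $\nu$ at once, so there is no ambiguity in using one joining throughout.

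Next I would note that the index set $\{1,\dots,d_\pi\}$ is partitioned by the relation $i \sim j \iff p_i\lambda = p_j\lambda$. The theorem immediately preceding the definition of multiplicity (identifying $\{p_i\lambda : 1 \le i \le d_\pi\}$ with the set of all ergodic measures in $\pi^{-1}\nu$) tells us that the distinct values taken by the margins are exactly the ergodic lifts of $\nu$. Hence the blocks of this partition are indexed precisely by those lifts, and by definition the block attached to a lift $\mu$ has cardinality $m_\pi(\mu)$. Summing the block sizes over all blocks then yields
$$\sum_\mu m_\pi(\mu) = \#\{i : 1 \le i \le d_\pi\} = d_\pi,$$
where $\mu$ ranges over the ergodic lifts, which is the desired identity.

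As the surrounding text already signals, this statement is essentially immediate once the supporting machinery is in hand, so I do not expect a genuine obstacle. If anything requires care it is the well-definedness step in the first paragraph, namely confirming that one degree joining can serve every $\mu$ at the same time; but this rests entirely on the permutation-invariance supplied by Theorem~\ref{thm:degree-joining-unique} together with the identification of the margins with the ergodic lifts, both of which are available. The remaining content is the purely combinatorial observation that a finite set decomposes as the disjoint union of its fibers under a map, and the total count is the sum of the fiber sizes.
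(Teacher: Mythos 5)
Your proposal is correct and follows exactly the argument the paper has in mind: the paper treats this as ``trivial from the way multiplicity is defined,'' and your write-up simply makes that triviality explicit --- one fixed degree joining over $\nu$ computes every $m_\pi(\mu)$ (since $\pi\mu=\nu$ and uniqueness up to permutation makes the count well defined), and the margins-equal-lifts theorem turns the sum into a count of the partition blocks of $\{1,\dots,d_\pi\}$. No discrepancy with the paper's approach.
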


In particular, the degree of $\pi$ is the upper bound on the number of ergodic lifts, which is already known. The new result is that their multiplicities sum to the degree.
% The new result is that there are $d_\pi$ ergodic lifts, counting multiplicity.

\begin{theorem}
  Let $X$ be an irreducible sofic shift and $\pi: X\to Y$ a finite-to-one factor code. Let $\nu$ be a fully supported ergodic measure on $Y$. Then for $\nu$-a.e. $y \in Y$, each point in the fiber $\pi^{-1}(y)$ is a generic point for some ergodic measure in $\pi^{-1}(\nu)$. Furthermore, let $\mu_1, \dots, \mu_k$ be all ergodic lifts of $\nu$ and let $m_1,\dots, m_k$ be their multiplicities. Then for $\nu$-a.e. $y \in Y$, the fiber $\pi^{-1}(y)$ consists of $m_1$ points generic for $\mu_1$, and $m_2$ points generic for $\mu_2$, \dots, and $m_k$ points generic for $\mu_k$.
\end{theorem}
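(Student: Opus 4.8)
The plan is to extract both assertions from a single degree joining $\lambda$ over $\nu$, whose existence is guaranteed by Theorem~\ref{thm:degree-joining-exists}. Since $\lambda$ is ergodic, Birkhoff's theorem supplies a $\lambda$-full-measure set $G$ of points generic for $\lambda$; intersecting with the separating set (also $\lambda$-full) I may assume that every $\bar x = (x^{(1)}, \dots, x^{(d_\pi)}) \in G$ has $d_\pi$ pairwise distinct coordinates. The first elementary ingredient I would record is that projecting a generic point along a factor map yields a point generic for the image measure: since each coordinate projection $p_i \colon X^{d_\pi} \to X$ is continuous and commutes with the shift, for every $\bar x \in G$ the coordinate $x^{(i)}$ is generic for $p_i\lambda$. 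By the theorem identifying the margins of a degree joining with the ergodic lifts of $\nu$, the list $p_1\lambda, \dots, p_{d_\pi}\lambda$ consists exactly of the measures $\mu_1, \dots, \mu_k$, with $\mu_j$ occurring precisely $m_j$ times by the definition of multiplicity.

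Next I would build the bridge from $\lambda$ on $X^{d_\pi}$ to fibers in $Y$. Let $\Phi\colon X^{d_\pi} \to Y$ be given by $\Phi(\bar x) = \pi(x^{(1)})$, so that $\Phi\lambda = \nu$, and disintegrate $\lambda = \int_Y \lambda_y \, d\nu(y)$ over this factor map. Because $\lambda(G) = 1$, we have $\lambda_y(G) = 1$ for $\nu$-a.e.\ $y$; in particular $G \cap \Phi^{-1}(y)$ is nonempty, so there is at least one $\lambda$-generic tuple $\bar x$ lying over $y$. Since $\nu$ has full support, $\nu$-a.e.\ $y$ is also doubly transitive, so by Theorem~\ref{thm:fto-has-degree} the fiber $\pi^{-1}(y)$ has exactly $d_\pi$ points. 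Now comes the crucial observation: the $d_\pi$ coordinates of $\bar x$ are distinct points all lying in $\pi^{-1}(y)$ (as $\bar x$ belongs to the relative joining set $X^{d_\pi}_\pi$), and $d_\pi$ distinct points inside a fiber of cardinality $d_\pi$ must be the whole fiber. Hence $\{x^{(1)}, \dots, x^{(d_\pi)}\} = \pi^{-1}(y)$.

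With this identification the conclusions follow at once. Every point of $\pi^{-1}(y)$ appears as some coordinate $x^{(i)}$ and is therefore generic for $p_i\lambda$, proving the first assertion. Moreover $i \mapsto x^{(i)}$ is a bijection from $\{1, \dots, d_\pi\}$ onto $\pi^{-1}(y)$, and since a point can be generic for at most one measure, the number of fiber points generic for $\mu_j$ equals the number of indices $i$ with $p_i\lambda = \mu_j$, which is $m_j$; this gives the second assertion. The step I expect to require the most care is the passage from the $\lambda$-almost-everywhere statements to an \emph{everywhere-on-the-fiber} statement for $\nu$-a.e.\ $y$, since a priori the exceptional non-generic tuples could project onto a large set of fibers. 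The resolution is exactly the observation above, that a \emph{single} generic tuple over $y$ already exhausts the whole fiber, so one only needs the conditional measure $\lambda_y$ to charge $G$ rather than to be entirely carried by it, which disintegration provides for $\nu$-a.e.\ $y$.
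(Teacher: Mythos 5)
Your proposal is correct and follows exactly the paper's approach: take a degree joining $\lambda$ over $\nu$, note that $\lambda$-a.e.\ tuple has each coordinate generic for the corresponding margin, and transfer this to $Y$. The paper's own proof is only three sentences and leaves the transfer step implicit; your disintegration argument together with the observation that $d_\pi$ distinct points in a fiber of cardinality $d_\pi$ exhaust the fiber is precisely the correct way to fill in that step.
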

\begin{proof}
  Let $\lambda$ be a degree joining over $\nu$. For $\lambda$-a.e. $(x_1, \dots, x_d)$, we have that $x_1$ is generic for $p_1 \lambda$, and $x_2$ is generic for $p_2 \lambda$, and so on. The desired conclusion follows by transferring to $Y$.
\end{proof}

\begin{theorem}
  Let $X$ be an irreducible sofic shift and $\pi: X\to Y$ a finite-to-one factor code. Let $\mu$ be a fully supported ergodic measure on $X$ and let $m$ be its multiplicity. Let $\{\mu_y\}_{y \in Y}$ be the disintegration of $\mu$ over $Y$. Then
  \begin{enumerate}
  \item For $\pi\mu$-a.e. $y$, the measure $\mu_y$ is uniformly distributed on $G_\mu \cap \pi^{-1}(y)$, where $G_\mu$ is the set of points generic for $\mu$, and there are $m$ points in $G_\mu \cap \pi^{-1}(y)$.
  \item $m$ is the maximum number such that there is an $m$-fold separating relative joining of $\mu, \dots, \mu$.
  \item $(\mu \otimes_{\pi\mu} \mu) \{(x, x'): x=x'\} = \frac1m$
  \end{enumerate}
\end{theorem}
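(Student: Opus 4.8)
The plan is to derive all three parts from a single fixed degree joining. Write $\nu = \pi\mu$, which is fully supported by Lemma~\ref{lem:fto-full}, fix a degree joining $\lambda$ over $\nu$, and set $I = \{i : 1\le i\le d_\pi,\ p_i\lambda = \mu\}$, so that $|I| = m$ by the definition of multiplicity. For part~(1) I would first record two facts that need no symmetry: by the preceding theorem describing the fibers, for $\nu$-a.e.\ $y$ the set $F(y) := G_\mu \cap \pi^{-1}(y)$ consists of exactly $m$ points; and since $\mu$ is ergodic we have $\mu(G_\mu)=1$, so the fiber measure $\mu_y$ is supported on $F(y)$ for $\nu$-a.e.\ $y$. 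It then remains only to show that $\mu_y$ is \emph{uniform} on these $m$ points, and this is the main obstacle.

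To get uniformity I would pass to the self-joining $\kappa := p_I\lambda$ (with $p_I$ the projection of $X^{d_\pi}$ onto the coordinates indexed by $I$), an $m$-fold separating relative joining over $\nu$ all of whose margins equal $\mu$. Because $\kappa$ is separating with each margin carried by $G_\mu$, for $\nu$-a.e.\ $y$ the disintegrated measure $\kappa_y$ is carried by those $m$-tuples that list $F(y)$ bijectively. Now symmetrize: $\bar\kappa := \frac{1}{m!}\sum_{\sigma\in S_m}\sigma_*\kappa$ is invariant under the coordinate action of $S_m$ and still has every margin equal to $\mu$. Since the $S_m$-action is fiberwise over $Y$, the disintegration satisfies $\bar\kappa_y = \frac{1}{m!}\sum_{\sigma}\sigma_*\kappa_y$, an $S_m$-invariant probability measure on the $m!$ orderings of $F(y)$; as $S_m$ permutes these orderings simply transitively, $\bar\kappa_y$ must be uniform on them. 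Marginalizing any one coordinate then shows $\mu_y = (p_i)_*\bar\kappa_y$ is uniform on $F(y)$, which proves part~(1). I expect the only delicate points here to be the routine justifications that pushforward commutes with disintegration over $Y$ and that the separating property passes to the disintegrated measures.

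For part~(2), existence is immediate, since $\kappa = p_I\lambda$ is an $m$-fold separating relative joining with all margins $\mu$. For the upper bound, suppose $\eta$ is any $n$-fold separating relative joining over $\nu$ with all margins $\mu$. Passing to an ergodic component---which keeps every margin equal to $\mu$ by ergodicity of $\mu$, keeps the image equal to $\nu$, and remains separating---I may assume $\eta$ is ergodic, so universality (Theorem~\ref{thm:degree-joining-universal}) gives $\eta = p_f\lambda$ for some $f:\{1,\dots,n\}\to\{1,\dots,d_\pi\}$. The requirement that every margin be $\mu$ forces $f$ to take values in $I$, and the separating property forces $f$ to be injective (if $f(i)=f(j)$ with $i\neq j$, then the $i$-th and $j$-th coordinates of $\eta$ coincide a.e.). Hence $n \le |I| = m$, so $m$ is the maximum.

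For part~(3), I would use that the relatively independent self-joining $\mu\otimes_{\nu}\mu$ disintegrates over $Y$ as $\int_Y \mu_y\otimes\mu_y\,d\nu(y)$. Evaluating on the diagonal and invoking the uniformity from part~(1),
$$(\mu\otimes_{\nu}\mu)\{(x,x'):x=x'\} = \int_Y \sum_{w\in F(y)}\mu_y(\{w\})^2\,d\nu(y) = \int_Y m\cdot\frac{1}{m^2}\,d\nu(y) = \frac1m,$$
which is the claim.
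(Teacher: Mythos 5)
Your proposal is correct. For parts (1) and (3) it is essentially the paper's argument: the paper also fixes a degree joining over $\pi\mu$, projects to the coordinates whose margin is $\mu$ to get an $m$-fold separating self-joining $\lambda$, observes that the fiber measures $\lambda_y$ are supported on the set $M_y$ of orderings of $G_\mu\cap\pi^{-1}(y)$, and then identifies $\mu_y$ via uniqueness of disintegration. The only cosmetic difference is how uniformity is extracted: the paper averages the coordinate-evaluation map over a uniformly random index (it pushes $\lambda\otimes U_m$ forward under $(x_1,\dots,x_m,i)\mapsto x_i$), whereas you symmetrize the joining over $S_m$ and use simple transitivity of the $S_m$-action on $M_y$; these are interchangeable devices, and both reduce to the same ``routine'' step you flagged, namely that $y\mapsto F(\lambda_y\otimes U_m)$ (resp.\ $y\mapsto (p_1)_*\bar\kappa_y$) is again a disintegration of $\mu$ over $\pi$, settled by uniqueness of disintegration. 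Where you genuinely diverge is part (2): the paper's upper bound is a direct counting contradiction --- an $(m+1)$-fold separating self-joining of $\mu$ would place, for $\pi\mu$-a.e.\ $y$, at least $m+1$ distinct points of $G_\mu$ in $\pi^{-1}(y)$, contradicting part (1) --- and it needs neither ergodicity of the joining nor the universality theorem. Your route (pass to an ergodic component, apply Theorem~\ref{thm:degree-joining-universal}, and force $f$ to be an injection into $I$) is also valid, and it has the merit of exhibiting every maximal separating self-joining as a projection of the degree joining, but it invokes heavier machinery for a statement the paper gets from genericity alone; note also that your ergodic-decomposition step (margins, image, and the separating property all pass to a.e.\ component) is exactly the kind of verification the paper's shortcut avoids.
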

\begin{proof}
(1)
The previous theorem ensures that there is a Borel subset $Y_0 \subset Y$ such that $\pi\mu(Y_0)=1$ and that the size of $G_\mu \cap \pi^{-1}(y)$ is $m$ for each $y \in Y_0$.
For each $y \in Y_0$, let $M_y \subset X^m$ be the set of all $m!$ orderings of the $m$ distinct points in $G_\mu \cap \pi^{-1}(y)$.

We can obtain an $m$-fold separating relative joining $\lambda$ of $\mu, \dots, \mu$ by projecting a degree joining over $\pi\mu$ to the appropriate $m$ coordinates.
Then for $\lambda$-a.e. $(x_1,\dots,x_m)$, the point $\pi\circ p_1(x_1)$ is in $Y_0$ and the sequence $(x_1,\dots,x_m)$ is an element of $M_{\pi\circ p_1(x_1)}$.

Consider the disintegration of $\lambda$ via $\pi\circ p_1$ over $Y$ and write $\lambda = \int_Y \lambda_y d\pi\mu(y)$
Then for $\pi\mu$-a.e. $y$ we have
\begin{equation}
  \label{eq:My}
  \lambda_y(M_y) = 1
\end{equation}
and so $\lambda_y$ is an atomic measure supported on $M_y$.

Consider the map $F: (X^m, \lambda) \otimes (\{1,2,\dots,m\}, U_m) \to X$, where $U_m$ is the uniform probability distribution on $m$ digits, defined by $F(x_1,\dots, x_m, i) = x_i$. Then we have $$F(\lambda \otimes U_m) = \frac{\mu+ \dots + \mu}{m} = \mu$$
By \eqref{eq:My}, it follows that for $\pi\mu$-a.e. $y \in Y$, the measure $F(\lambda_y \otimes U_m)$ is the uniform probability distribution on the $m$ points in $G_\mu \cap \pi^{-1}(y)$.
So it is sufficient to show that $y \mapsto F(\lambda_y \otimes U_m)$ is another disintegration of $\mu$ via $\pi$.
Indeed, we have
\begin{align*}
  \mu &= F(\lambda \otimes U_m)\\
  &= \int_Y F(\lambda_y \otimes U_m) d\pi\mu(y)
\end{align*}
and we have already shown that $F(\lambda_y \otimes U_m)$ is supported on $\pi^{-1}(y)$.

(2) We have already shown the existence of an $m$-fold joining with the specified property. Suppose $\lambda'$ is an $m+1$-fold such joining. Then for $\lambda'$-a.e. $(x_1,\dots, x_{m+1})$, the points $x_1,\dots, x_{m+1}$ are $m+1$ distinct points and they are all in $G_\mu \cap \pi^{-1}(\pi\circ p_1 (x_1))$. Therefore, for $\pi\mu$-a.e. $y$, the size of $G_\mu \cap \pi^{-1}(y)$ is at least $m+1$. This contradicts the previous theorem and so there can be no such $m+1$-fold joining $\lambda'$.

(3) For $\pi\mu$-a.e. $y$, we have $(\mu_y \otimes \mu_y) \{(x, x'): x=x'\} = \frac1m$ because $\mu_y$ is the uniform distribution on $m$ points. Integrating over $Y$ gives the desired result.

\end{proof}

Each of the three properties shown in the above theorem can be taken to be an alternative characterization of the notion of multiplicity.

\section{Multiplicity and degree under factor maps between general systems}

In this section, we show that the results in the previous two sections generalize to arbitrary factor maps between topological dynamical systems as long as we have the condition that $\pi^{-1}(y)$ is a finite set for $\nu$-a.e. $y \in Y$ where $\nu$ is an ergodic measure on $Y$ which may or may not have full support. First, we define the notion of degree for an arbitrary ergodic measure on $Y$. For that, we need the following lemma.

\begin{lemma}
  Let $(X,T)$ and $(Y,S)$ be topological dynamical systems and $\pi: X\to Y$ a factor map. Then the map $F: Y \to \{1,2,\dots\} \cup \{\infty\}$ defined by $y \mapsto |\pi^{-1}(y)|$ is measurable and is constant a.e. with respect to each ergodic measure on $Y$.
\end{lemma}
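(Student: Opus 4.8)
The plan is to handle the two assertions separately: first establish Borel measurability of $F$, and then deduce a.e.\ constancy from $S$-invariance and ergodicity.

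For measurability I would avoid counting \emph{distinct} points directly, since the set of $n$-tuples of distinct points with common image is the intersection of the closed fiber product with an open set (a locally closed, hence Borel, set), and its continuous image in $Y$ is therefore only guaranteed to be analytic. Instead, fix a metric $d$ on $X$ and, for $\epsilon > 0$ and $y \in Y$, let $N_\epsilon(y)$ denote the largest number of pairwise $\epsilon$-separated points in $\pi^{-1}(y)$. The key point is that
$$A_{n,\epsilon} = \{(x_1,\dots,x_n) \in X^n : \pi(x_1) = \dots = \pi(x_n),\ d(x_i,x_j) \ge \epsilon \text{ for } i \ne j\}$$
is a closed subset of the compact space $X^n$, hence compact; since $(x_1,\dots,x_n) \mapsto \pi(x_1)$ is continuous, the image $\{y : N_\epsilon(y) \ge n\}$ is compact, hence closed. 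Thus each $N_\epsilon$ is Borel (in fact upper semicontinuous).

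I would then observe that $F(y) = \sup_{k \ge 1} N_{1/k}(y)$. For a finite fiber this holds because once $1/k$ is below the minimal pairwise distance, $N_{1/k}(y)$ equals the cardinality; for an infinite (compact) fiber one picks, for each $n$, any $n$ distinct points of $\pi^{-1}(y)$ and takes $\epsilon$ below their minimal pairwise distance to get $N_\epsilon(y) \ge n$, so the supremum is $\infty$, again matching $|\pi^{-1}(y)|$. Being a countable supremum of Borel functions, $F$ is Borel measurable. For the constancy, I would use that $\pi$ intertwines the dynamics, $\pi \circ T = S \circ \pi$, so that $\pi^{-1}(Sy) = T(\pi^{-1}(y))$; since $T$ is a bijection this forces $F(Sy) = F(y)$ for every $y$. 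Hence $F$ is an $S$-invariant Borel function, and any ergodic measure $\nu$ on $Y$ makes it $\nu$-a.e.\ constant.

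The step I expect to be the main obstacle is the measurability argument, and specifically the choice to count $\epsilon$-separated points rather than distinct points: this replaces a merely analytic level set by a genuinely closed one, which yields full Borel measurability rather than only universal measurability. Everything after that — the identity $F = \sup_k N_{1/k}$ and the invariance argument — is routine.
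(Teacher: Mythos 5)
Your proof is correct, and it takes a genuinely different --- and in fact stronger --- route than the paper's. The paper does not attempt Borel measurability at all: it writes the level set $\{y : |\pi^{-1}(y)| \ge k\}$ as the projection to $Y$ of the Borel set of tuples $(x_1,\dots,x_k,y)$ of $k$ distinct preimages of $y$, concludes that this set is analytic and hence universally measurable, and explicitly remarks that it does not know whether $F$ is Borel. Your $\epsilon$-separation device settles that question affirmatively in this setting: since $X$ is compact and $\pi$ continuous, $A_{n,\epsilon}$ is compact, its continuous image $\{y : N_\epsilon(y) \ge n\}$ is closed rather than merely analytic, each $N_\epsilon$ is upper semicontinuous, and $F = \sup_k N_{1/k}$ is genuinely Borel (the identity holds because distinct points of a fiber are $\epsilon$-separated for small enough $\epsilon$, while $\epsilon$-separated points are distinct; compactness also keeps each $N_\epsilon$ finite). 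The invariance-plus-ergodicity step is the same in both arguments. What the paper's weaker route buys is robustness: it uses neither compactness of $X$ nor continuity of $\pi$, and it is the argument that survives in the paper's subsequent lemma, where one counts $F_A(y) = |\pi^{-1}(y)\cap A|$ for an arbitrary Borel set $A$ and a Borel map $\pi$ between Polish spaces --- there your trick breaks down, since $\pi^{-1}(y)\cap A$ need not be closed, so the separated tuples form only a Borel set and its projection is again only analytic. What your route buys, in the compact continuous setting of this particular lemma, is the sharper conclusion (Borel, indeed a countable supremum of upper semicontinuous functions), which also lets one invoke ergodicity for the Borel $\sigma$-algebra directly, with no appeal to completions of measures.
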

\begin{proof}
  We do not know if $F$ is Borel-measurable, but we can show that it is universally measurable. For each $k \in \mathbb N$, the set $ \{y\in Y: |\pi^{-1}y| \ge k\}$ is the projection of a Borel subset in $X^k \times Y$, namely, the subset consisting of all $(x_1,x_2,\dots,x_k,y) \in X^k \times Y$ for which $\pi(x_i)=y$ for all $1\le i \le k$ and $x_i \neq x_j$ for all $1\le i < j \le k$, and therefore this set is an analytic subset of $Y$, and hence a universally measurable set. It follows that the map $F$ is universally measurable.
  Since the map $F$ is invariant with respect to the action $S$, it must be constant a.e. with respect to each ergodic measure on $Y$.
\end{proof}

For each ergodic $\nu$ on $Y$, we define the \emph{degree} of $\nu$ relative to $\pi$ to be the number $d \in \{1,2,\dots\} \cup \{\infty\}$ such that for $\nu$-a.e. $y \in Y$, there are precisely $d$ points in the fiber $\pi^{-1}(y)$. We will denote this number by $d_{\pi,\nu}$, and if $\pi$ is understood, by $d_\nu$.

If a factor map $\pi: X \to Y$ is such that $d_{\pi,\nu} = d_{\pi,\nu'}$ whenever $\nu$ and $\nu'$ are fully supported ergodic measures on $Y$, it makes sense to define the \emph{degree of the factor map} to be $d_{\pi,\nu}$ and denote it by $d_\pi$. If $\pi: X \to Y$ is a finite-to-one factor code on an irreducible sofic shift, then its degree defined in this way is equivalent to the degree defined by using doubly transitive points on $Y$, since the set of doubly transitive points has full measure with respect to each fully supported ergodic $\nu$ on $Y$.
% next draft? the case when SFT or sofic, more general? degree and multiplication?

% next draft? assumption of invertible is necessary.

To establish the existence of a degree joining for the general case, we need another measurability lemma:

\begin{lemma}
  Let $\pi: X \to Y$ be a Borel-measurable map between Polish spaces. Let $A\subset Y$ be a Borel subset. Then the map $F_A: Y \to \{0,1,2,\dots\} \cup \{\infty\}$ defined by $y \mapsto |\pi^{-1}(y) \cap A|$ is universally measurable.
\end{lemma}
\begin{proof}
  For each $k \in \mathbb N$, the set $ \{y\in Y: |\pi^{-1}y \cap A| \ge k\}$ is the projection of a Borel subset in $X^k \times Y$, namely, the subset consisting of all $(x_1,x_2,\dots,x_k,y) \in X^k \times Y$ for which $\pi(x_i)=y$ and $x_i \in A$ for all $1\le i \le k$ and $x_i \neq x_j$ for all $1\le i < j \le k$, and therefore this set is an analytic subset of $Y$, and hence a universally measurable set. It follows that the map $F_A$ is universally measurable.
\end{proof}

\begin{theorem}
  Let $(X,T)$ and $(Y,S)$ be topological dynamical systems and $\pi: X\to Y$ a factor map. Let $\nu$ be an ergodic measure on $Y$ with finite degree $d := d_{\pi,\nu} < \infty$.
  Then there is an invariant measure $\mu$ on $X$ such that $\pi\mu=\nu$ and that $\mu_y$ (its disintegration over Y) is a uniform distribution on $\pi^{-1}(y)$ for $\nu$-a.e. $y \in Y$. This measure is unique and we will call it the \emph{canonical lift} of $\nu$ and denote it by $\ell_\pi(\nu)$.
\end{theorem}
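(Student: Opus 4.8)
The plan is to construct $\mu$ by hand through a fiberwise counting formula and then verify, in turn, that it is a well-defined Borel probability measure, that its image is $\nu$, that its disintegration is a.e.\ uniform on fibers, and that it is $T$-invariant, closing with an easy uniqueness argument. Concretely, writing $F_B(y) := |\pi^{-1}(y) \cap B|$ for a Borel set $B \subseteq X$, I would set
$$\mu(B) := \frac1d \int_Y F_B(y)\, d\nu(y).$$
The preceding measurability lemma guarantees that each $F_B$ is universally measurable, and since $0 \le F_B \le d$ off a $\nu$-null set (by the constancy lemma and the definition of $d = d_{\pi,\nu}$), the integral makes sense. Nonnegativity is clear, countable additivity follows from $F_{\bigsqcup_n B_n} = \sum_n F_{B_n}$ together with monotone convergence, and $\mu(X) = \frac1d \int_Y |\pi^{-1}(y)|\, d\nu(y) = 1$; hence $\mu$ is a countably additive Borel probability measure on the Polish (compact metric) space $X$, and is therefore a genuine regular measure.

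Next I would read off the image and the disintegration directly from the formula. For Borel $A \subseteq Y$ one has $\pi^{-1}(y) \cap \pi^{-1}(A) = \pi^{-1}(y)$ when $y \in A$ and $=\varnothing$ otherwise, so $F_{\pi^{-1}(A)}(y) = d\cdot \mathbf{1}_A(y)$ for $\nu$-a.e.\ $y$, whence $\pi\mu(A) = \nu(A)$ and so $\pi\mu = \nu$. Moreover the formula exhibits $\mu = \int_Y \mu_y\, d\nu(y)$ with $\mu_y := \frac1d \sum_{x \in \pi^{-1}(y)} \delta_x$; since each $\mu_y$ is concentrated on $\pi^{-1}(y)$ and $y \mapsto \mu_y(B) = \frac1d F_B(y)$ is universally measurable, uniqueness of the disintegration identifies this family with the disintegration of $\mu$ over $\nu$, and by construction it is the uniform distribution on the fiber.

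For invariance, the key observation is that $T$, being a homeomorphism with $\pi T = S\pi$, carries $\pi^{-1}(y) \cap T^{-1}B$ bijectively onto $\pi^{-1}(Sy) \cap B$, so that $F_{T^{-1}B}(y) = F_B(Sy)$. Consequently
$$T\mu(B) = \mu(T^{-1}B) = \frac1d \int_Y F_B(Sy)\, d\nu(y) = \frac1d \int_Y F_B(y)\, d\nu(y) = \mu(B),$$
where the middle step is the change of variables supplied by $S$-invariance of $\nu$ (valid for the universally measurable, $\nu$-integrable integrand $F_B$). Finally, uniqueness is immediate: any invariant lift $\mu'$ of $\nu$ whose disintegration is a.e.\ uniform on fibers has the same image $\nu$ and the same disintegration as $\mu$, and a measure is recovered from its image and disintegration by integration, so $\mu' = \mu$.

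I expect the only genuine subtlety to be the measurability bookkeeping: fiber-counting is in general only universally, not Borel, measurable, so I must work throughout with the $\nu$-completion and invoke the preceding lemma rather than assume Borel measurability. Once that is in place, every remaining step is a routine unwinding of the definitions, and no appeal to explicit measurable selections of the fibers is needed.
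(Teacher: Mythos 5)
Your proposal is correct and follows essentially the same route as the paper: the same defining formula $\mu(B) = \frac{1}{d}\int_Y F_B(y)\,d\nu(y)$, invariance via the identity $F_{T^{-1}B}(y) = F_B(Sy)$ together with $S$-invariance of $\nu$, identification of the fiberwise uniform measures as the disintegration, and uniqueness by recovering any such measure from its disintegration. Your write-up is somewhat more explicit than the paper's (e.g., justifying countable additivity by monotone convergence and spelling out the bijection $\pi^{-1}(y)\cap T^{-1}B \to \pi^{-1}(Sy)\cap B$), but there is no substantive difference in approach.
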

\begin{proof}
  (Existence)
  For each Borel $A\subset Y$, we define $$\mu(A) = \frac{\int_Y F_A(y) d\nu(y)}{d}$$
  This is well defined because of the previous lemma and it is easy to verify that $\mu$ is countably additive and $\mu(\emptyset)=0$ and $\mu(X)=1$.

  $\mu$ is $T$-invariant because
  \begin{align*}
    \mu(T^{-1}A) &= \frac{\int_Y F_{T^{-1}A}(y) d\nu(y)}{d}\\
    &= \frac{\int_Y F_{A}(Sy) d\nu(y)}{d}\\
    &= \frac{\int_Y F_{A}(y) d\nu(y)}{d}
  \end{align*}
  where the last equality holds because $\nu$ is $S$-invariant. It is also easy to verify $\pi\mu = \nu$.

  Let $Y_0$ be a Borel subset of $Y$ such that $\nu(Y_0)=1$ and $F_X(y) = d$ for all $y \in Y_0$. Then the map $U: Y_0 \to M(X)$ defined by requiring that $U_y$ be the uniform distribution on the $d$ points in $\pi^{-1}(y)$ is a measurable map by the previous lemma again. This map $U$ is a disintegration of $\mu$ over Y, since
  \begin{align*}
    \mu(A) &= \frac{\int_Y F_A(y) d\nu(y)}{d}\\
    &=\int_{Y_0} U_y(A) d\nu(y)
  \end{align*}

  (Uniqueness)
  If $\mu'$ is another such measure, then
  \begin{align*}
    \mu' &= \int_Y U_y d\nu(y)\\
    &= \mu
  \end{align*}
\end{proof}

\begin{theorem}
  Let $(X,T)$ and $(Y,S)$ be topological dynamical systems and $\pi: X\to Y$ a factor map. Let $\nu$ be an ergodic measure on $Y$ with finite degree $d := d_{\pi,\nu} < \infty$. Then there exists an ergodic $d$-fold separating relative joining over $\nu$. We will call such a joining a \emph{degree joining} over $\nu$ with respect to $\pi$.
\end{theorem}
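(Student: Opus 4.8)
The plan is to construct the desired joining as an ergodic component of a single invariant measure that is ``uniform over the orderings of the fibers,'' reusing the canonical lift $\ell := \ell_\pi(\nu)$ established above together with the standard relatively independent joining, and then to extract ergodicity via ergodic decomposition exactly as in Lemma~\ref{lem:ergodic-rel-join-exists}.

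First I would form the $d$-fold relatively independent self-joining $\Lambda := \ell \otimes_\nu \cdots \otimes_\nu \ell$ of the canonical lift over the common factor $(Y,S,\nu)$. Its disintegration $\Lambda_y$ over $\nu$-a.e.\ $y$ is the $d$-fold product of the uniform distribution on the $d$-point fiber $\pi^{-1}(y)$. Let $D \subset X^d$ be the invariant Borel set of $d$-tuples with pairwise distinct coordinates. Since the fiber has exactly $d$ points for $\nu$-a.e.\ $y$, the conditional mass $\Lambda_y(D) = d!/d^d$ is a positive constant, so $\Lambda(D) = d!/d^d > 0$, and I set $\lambda := \Lambda(\,\cdot \cap D)/\Lambda(D)$. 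Because $D$ is invariant and $\Lambda$ is invariant, $\lambda$ is an invariant probability measure, and its disintegration $\lambda_y$ is the uniform distribution on the $d!$ orderings of $\pi^{-1}(y)$. I would then verify that $\lambda$ is a $d$-fold separating relative joining over $\nu$: it is concentrated on the invariant set $X^d_\pi$ and on $D$ (hence separating), and a short computation using the constancy of $\Lambda_y(D)$ gives $\Pi\lambda = \nu$, where $\Pi := \pi\circ p_1$.

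The remaining task is to make it ergodic. I would pass to the ergodic decomposition $\lambda = \int \lambda^{(\omega)}\, dP(\omega)$. Since $D$ and $X^d_\pi$ are invariant and $\lambda$-conull, each component $\lambda^{(\omega)}$ is concentrated on $D \cap X^d_\pi$ for $P$-a.e.\ $\omega$, so each is separating and concentrated on the relative product. For the image, note that $\Pi$ is a continuous equivariant map, so each $\Pi\lambda^{(\omega)}$ is an ergodic measure on $Y$; integrating gives $\nu = \Pi\lambda = \int \Pi\lambda^{(\omega)}\, dP(\omega)$, and since $\nu$ is ergodic, uniqueness of the ergodic decomposition forces $\Pi\lambda^{(\omega)} = \nu$ for $P$-a.e.\ $\omega$. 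Thus $P$-a.e.\ $\lambda^{(\omega)}$ is an ergodic $d$-fold separating relative joining over $\nu$, and choosing any such component completes the proof.

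I expect the main obstacle to be the bookkeeping in the first step: rigorously producing $\lambda$ with its fiberwise-uniform-on-orderings disintegration and verifying it is genuinely a separating relative joining over $\nu$; once $\lambda$ is in hand, the extraction of an ergodic component that remains a relative joining over $\nu$ is routine and mirrors Lemma~\ref{lem:ergodic-rel-join-exists}. A secondary point to handle with care is that ``separating'' (distinct coordinates) automatically upgrades to ``the $d$ coordinates are exactly the fiber,'' because $\nu$-a.e.\ fiber has exactly $d$ points; this is what ties the joining to the degree $d$.
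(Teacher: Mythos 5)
Your proposal is correct and follows essentially the same route as the paper: form the $d$-fold relatively independent self-joining of the canonical lift $\ell_\pi(\nu)$ over $\nu$, observe that the invariant set of pairwise-distinct tuples has positive (conditional) mass $d!/d^d$, and extract an ergodic component that is then automatically separating and still a relative joining over $\nu$. The only cosmetic difference is that you condition on the distinct-tuple set before taking the ergodic decomposition, whereas the paper decomposes directly and uses invariance of that set to upgrade positive measure to full measure in an ergodic component.
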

\begin{proof}
  Let $\mu := \ell_\pi(\nu)$ be the canonical lift of $\nu$.
  Let $\lambda$ be the $d$-fold relatively independent joining of $\mu$ over $\nu$. In other words,
  $$\lambda_y = \mu_y \otimes \mu_y \otimes \dots \otimes \mu_y$$

  Since $\mu_y$ is a uniform distribution on $d$ points, we have
  \begin{align*}
    \lambda_y(Z) &= \frac{d-1}{d} \cdot \frac{d-2}{d} \cdots \frac{1}{d}\\
    &> 0
  \end{align*}
  where $Z$ is the set of all $(x_1,\dots,x_d) \in X^d$ such that $x_i \neq x_j$ for all $1 \le i < j \le d$. In particular, we have $\lambda(Z)>0$.

  Let $\lambda = \int \lambda' d\rho(\lambda')$ be the ergodic decomposition of $\lambda$. Then, since
  $$ 0 < \lambda(Z) = \int \lambda'(Z) d\rho(\lambda')$$
  we have that for each $\lambda'$ in some $\rho$-positive set, $\lambda'(Z) > 0$, but since $\lambda'$ is ergodic and $Z$ is invariant, we get $\lambda'(Z) = 1$.
\end{proof}

All theorems in the previous two sections hold for the general case of factor maps between topological dynamical systems, as long as $\nu$ is ergodic and $d_{\pi,\nu}$ is finite.
% The role of doubly transitive points is replace by.

\section{Canonical lift}

Using the notion of canonical lift, we can obtain yet another characterization of the notion of multiplicity, as weights in the ergodic decomposition of the canonical lift.
\begin{theorem}
 Let $(X,T)$ and $(Y,S)$ be topological dynamical systems and $\pi: X\to Y$ a factor map. Let $\nu$ be an ergodic measure on $Y$ with finite degree $d := d_{\pi,\nu} < \infty$. Let $\mu_1, \dots, \mu_k$ be all ergodic lifts of $\nu$ and let $m_1,\dots, m_k$ be their multiplicities. Then the ergodic decomposition of the canonical lift of $\nu$ is given by
 $$\ell_\pi(\nu) = \sum_{i=1}^{k} \frac{m_i}{d} \cdot \mu_i$$
\end{theorem}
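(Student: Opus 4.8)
The plan is to compute the ergodic decomposition of the canonical lift $\ell_\pi(\nu)$ by relating it to a degree joining $\lambda$ over $\nu$. The key observation is that the canonical lift and the margins of a degree joining are two different ways of packaging the same fiberwise data: over $\nu$-a.e. $y$, the fiber $\pi^{-1}(y)$ consists of exactly $d$ points, and the theorem from the Multiplicity section tells us these points are generic for the various $\mu_i$, with exactly $m_i$ of them generic for $\mu_i$. The canonical lift puts uniform mass $\tfrac1d$ on each of these $d$ points, so morally the $\mu_i$ should appear with weight $m_i/d$.

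First I would fix a degree joining $\lambda$ over $\nu$ and recall that its margins $p_1\lambda,\dots,p_d\lambda$ enumerate the ergodic lifts $\mu_1,\dots,\mu_k$ with multiplicities, so that among the $d$ margins, exactly $m_i$ of them equal $\mu_i$. Next I would recover the canonical lift from $\lambda$ via the averaging map used in the Multiplicity section: consider $F:(X^d,\lambda)\otimes(\{1,\dots,d\},U_d)\to X$ with $F(x_1,\dots,x_d,i)=x_i$, and observe that
$$
F(\lambda\otimes U_d) = \frac{p_1\lambda+\dots+p_d\lambda}{d}.
$$
I would then argue that the left-hand side is precisely $\ell_\pi(\nu)$: since $\lambda$ is separating and a relative joining over $\nu$, for $\lambda$-a.e. $(x_1,\dots,x_d)$ the coordinates are the $d$ distinct points of $\pi^{-1}(\pi(x_1))$, so disintegrating $F(\lambda\otimes U_d)$ over $Y$ yields the uniform distribution on $\pi^{-1}(y)$ for $\nu$-a.e. $y$. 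By the uniqueness clause in the canonical lift theorem, this forces $F(\lambda\otimes U_d)=\ell_\pi(\nu)$.

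Combining the two displayed facts gives
$$
\ell_\pi(\nu) = \frac{p_1\lambda+\dots+p_d\lambda}{d} = \sum_{i=1}^{k}\frac{m_i}{d}\,\mu_i,
$$
where the last equality groups the $d$ margins according to which $\mu_i$ they equal, each $\mu_i$ occurring $m_i$ times by the definition of multiplicity. Since the $\mu_i$ are distinct ergodic measures and the coefficients $m_i/d$ are positive and sum to $1$ (as $\sum_i m_i = d$ by the summation theorem), this is genuinely the ergodic decomposition rather than merely a convex combination.

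The step I expect to require the most care is the identification $F(\lambda\otimes U_d)=\ell_\pi(\nu)$ through disintegration, specifically verifying that $y\mapsto F(\lambda_y\otimes U_d)$ is the uniform distribution on $\pi^{-1}(y)$ for $\nu$-a.e. $y$. This is where separatedness and the a.e.\ finiteness of fibers are essential, and it parallels the disintegration argument already carried out in part (1) of the multiplicity theorem; I would lean on that computation rather than redo it. Everything else is bookkeeping: the decomposition into a sum over distinct $\mu_i$ and the fact that the weights are a probability vector.
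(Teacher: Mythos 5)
Your proposal is correct and follows essentially the same route as the paper: take a degree joining $\lambda$ over $\nu$, apply the averaging map $F(\lambda\otimes U_d)=\frac{1}{d}\sum_i p_i\lambda$, identify this with $\ell_\pi(\nu)$ by checking that the fiberwise disintegration $F(\lambda_y\otimes U_d)$ is uniform on $\pi^{-1}(y)$ and invoking uniqueness of the canonical lift, then group the margins by multiplicity. Your added remark that distinctness and ergodicity of the $\mu_i$ make this the genuine ergodic decomposition is a point the paper leaves implicit, but the argument is the same.
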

\begin{proof}
 Let $\lambda$ be a degree joining over $\nu$. Consider the map $$F: (X^d, \lambda) \otimes (\{1,2,\dots,d\}, U_d) \to X$$ where $U_d$ is the uniform probability distribution on $d$ digits, defined by $$F(x_1,\dots, x_d, i) = x_i$$ Then we have $$F(\lambda \otimes U_d) = \frac{p_1\lambda + \dots + p_d\lambda}{d} = \sum_{i=1}^{k} \frac{m_i}{d} \cdot \mu_i$$

 It remains to show that $F(\lambda \otimes U_d)$ is the canonical lift of $\nu$. We already know that it is an invariant lift of $\nu$. For $\nu$-a.e. $y \in Y$, $\lambda_y$ is an atomic measure supported on $M_y \subset X^d$ where $M_y$ is the set of all $d!$ orderings of the $d$ distinct points in $\pi^{-1}(y)$. It is then easy to see that $F(\lambda_y \otimes U_d)$ is $U_y$, the uniform distribution on the $d$ points in $\pi^{-1}(y)$. From this, and by uniqueness of the canonical lift, we only need to show that $y \mapsto F(\lambda_y \otimes U_d)$ is a disintegration of $F(\lambda \otimes U_d)$ over $\pi$, but we already have
\begin{align*}
  F(\lambda \otimes U_d) = \int_Y F(\lambda_y \otimes U_d) d\nu(y)
\end{align*}
and we have already shown that $F(\lambda_y \otimes U_d)$ is supported on $\pi^{-1}(y)$.
\end{proof}

In the above sense, the canonical lift contains all possible ergodic lifts of $\nu$. In the above proof, we have shown that the canonical lift can be obtained from a degree joining. Similarly, a degree joining can be obtained from the canonical lift and that is indeed how the existence of a degree joining was proved.

\begin{corollary}
  Let $(X,T)$ and $(Y,S)$ be topological dynamical systems and $\pi: X\to Y$ a factor map. Let $\nu$ be an ergodic measure on $Y$ with finite degree $d := d_{\pi,\nu} < \infty$. The canonical lift $\ell_\pi(\nu)$ is ergodic if and only if there is only one (invariant) pre-image of $\nu$, in which case the canonical lift is the unique (invariant) pre-image of $\nu$.
\end{corollary}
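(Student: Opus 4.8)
The plan is to read everything off the ergodic decomposition formula $\ell_\pi(\nu) = \sum_{i=1}^{k} \frac{m_i}{d}\mu_i$ established in the preceding theorem. The measures $\mu_1,\dots,\mu_k$ are distinct and ergodic, the coefficients $m_i/d$ are strictly positive, and they sum to $1$ (either because $\sum_i m_i = d$, or simply because $\ell_\pi(\nu)$ is a probability measure and each $\mu_i$ is). Hence this formula \emph{is} the ergodic decomposition of $\ell_\pi(\nu)$, and so $\ell_\pi(\nu)$ is ergodic if and only if the sum collapses to a single term, that is, if and only if $k=1$. This reduces the whole statement to the equivalence of ``$k=1$'' with ``$\nu$ has a unique invariant pre-image''.

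The key intermediate step I would record is the following observation: every invariant pre-image of $\nu$ is an average of the ergodic pre-images $\mu_1,\dots,\mu_k$. Indeed, if $\mu'$ is an invariant pre-image of $\nu$ with ergodic decomposition $\mu' = \int \mu''\, d\rho(\mu'')$, then pushing forward gives $\nu = \pi\mu' = \int \pi\mu''\, d\rho(\mu'')$; each $\pi\mu''$ is ergodic on $Y$, so by uniqueness of the ergodic decomposition of the already-ergodic measure $\nu$ we must have $\pi\mu'' = \nu$ for $\rho$-a.e.\ $\mu''$. Thus $\rho$-a.e.\ ergodic component of $\mu'$ is an ergodic lift of $\nu$, hence equals one of $\mu_1,\dots,\mu_k$, and $\mu'$ is a convex combination of them.

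With this in hand, both directions fall out. If $k=1$, the only ergodic pre-image is $\mu_1$, so by the observation every invariant pre-image equals $\mu_1$; therefore $\nu$ has a unique invariant pre-image, it is ergodic, and $\ell_\pi(\nu) = \frac{m_1}{d}\mu_1 = \mu_1$ coincides with it. Conversely, if $\nu$ has only one invariant pre-image, then since each of the $k\ge 1$ ergodic lifts $\mu_i$ is in particular an invariant pre-image, they must all coincide, forcing $k=1$; by the first paragraph $\ell_\pi(\nu)$ is then ergodic and equal to that unique pre-image.

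The only step requiring care is the observation of the second paragraph, and the hard part there is nothing more than the standard fact that a factor map sends ergodic decompositions to ergodic decompositions, together with the hypothesis that $\nu$ is ergodic; everything else is a direct reading of the decomposition formula. (One should also note that $k\ge 1$, which is automatic since $\ell_\pi(\nu)$ exists as a probability measure and its ergodic components are, by the same observation, ergodic lifts of $\nu$.)
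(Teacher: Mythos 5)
Your proposal is correct and follows exactly the route the paper intends: the corollary is stated without proof as an immediate consequence of the preceding theorem's formula $\ell_\pi(\nu) = \sum_{i=1}^{k} \frac{m_i}{d}\mu_i$, and your argument is precisely that reading-off, with the one genuinely needed supporting fact (every invariant pre-image of an ergodic $\nu$ is a convex combination of its ergodic pre-images) filled in correctly via pushing forward the ergodic decomposition. Note that this supporting fact is essentially the first half of a lemma the paper proves later in its thermodynamic formalism section, so your write-up is consistent with the paper's own toolkit.
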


% act well with composition of factor? after extending to non-ergodic?

\section{Examples}

% all examples

\begin{example}
Let $N \in \mathbb N$.  Let $X = Y$ be the full $N$ shift. Then the factor code $\pi:X \to Y$ defined by
$$x = (x_i)_i \mapsto (x_{i+1}-x_{i})_i \pmod{N}$$
is a $N$-to-1 map. Indeed, if the map $s: X\to X$ is defined by
$$x = (x_i)_i \mapsto (x_i+1)_i \pmod{N}$$
then we have
$$\pi^{-1}\pi x = \{x, s(x),\dots, s^{N-1}(x) \} = \{s^k(x) : k \in \mathbb Z \}$$
for all $x \in X$. For any ergodic $\mu$ on $X$, its image $\lambda$ under the map
$$x \mapsto (x, s(x),\dots, s^{N-1}(x))$$
is a degree joining over $\pi\mu$. ($\lambda$ is ergodic because it is an image of $\mu$ under a shift-commuting map.)
Therefore, in particular, all ergodic lifts of $\pi\mu$ are in the list $\mu, s(\mu), \dots, s^{N-1}(\mu)$ and the multiplicity of $\mu$ is the number of times it appears in the list and is therefore always a divisor of $N$. The number of ergodic lifts of $\pi\mu$ also divides $N$ and $N$ is the product of that number and the multiplicity of $\mu$.

If $\mu$ is the Bernoulli measure on $X$ given by a probability vector $(\alpha_1,\dots, \alpha_N)$, then its multiplicity is $\frac{N}{L}$ where $L$ is the least period of the sequence $(\alpha_1,\dots, \alpha_N)$.
\end{example}

\begin{lemma}
  Let  $(X, T, \mu)$ be an ergodic system. Denote by $(2, S, \nu)$ the unique ergodic system consisting of two atoms. Then the following are equivalent.
  \begin{enumerate}
  \item The system $(2,S,\nu)$ is a factor of $(X,T,\mu)$.
  \item The product $\mu \otimes \nu$ is not ergodic.
  \end{enumerate}
  If these conditions hold, we will say that 2 is a factor of $\mu$.
\end{lemma}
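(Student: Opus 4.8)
The plan is to identify the concrete structure of $(2,S,\nu)$ and reduce both conditions to a single statement about eigenfunctions of $T$ for the eigenvalue $-1$. First I note that the two-atom ergodic system must have $S$ swap its two points $\{0,1\}$ (if $S$ were the identity the only ergodic measures would be point masses, i.e.\ one atom), and invariance then forces $\nu=(\tfrac12,\tfrac12)$. A factor map onto this system is a measurable $\phi:X\to\{0,1\}$ with $\phi\circ T=S\circ\phi$ and $\phi_*\mu=\nu$; writing $A=\phi^{-1}(1)$, condition (1) is equivalent to the existence of a measurable set $A$ with $T^{-1}A=X\setminus A$ modulo $\mu$-null sets (and then $\mu(A)=\tfrac12$ is automatic, since $\mu(A)=\mu(T^{-1}A)=1-\mu(A)$). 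Equivalently, setting $g=2\,\mathbf{1}_A-1$, condition (1) says $T$ admits a $\{\pm1\}$-valued measurable eigenfunction of eigenvalue $-1$.

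For (1)$\Rightarrow$(2) I would exhibit an explicit nontrivial invariant set for the product. Given a factor map $\phi$ as above, consider $B=\{(x,i)\in X\times\{0,1\}:\phi(x)=i\}$. Using $\phi\circ T=S\circ\phi$ and that $S$ is a bijection one checks $(T\times S)^{-1}B=B$, while $(\mu\otimes\nu)(B)=\tfrac12$, so $\mu\otimes\nu$ has a nontrivial invariant set and is not ergodic.

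For (2)$\Rightarrow$(1) I would decompose $L^2(\mu\otimes\nu)\cong L^2(\mu)\otimes L^2(\nu)$ using the orthonormal basis $\{1,\chi\}$ of $L^2(\nu)$, where $\chi(0)=1,\ \chi(1)=-1$ satisfies $\chi\circ S=-\chi$. Every $F$ writes uniquely as $F(x,i)=f_0(x)+f_1(x)\chi(i)$, and the Koopman operator acts by $F\mapsto (U_Tf_0)(x)-(U_Tf_1)(x)\chi(i)$. Thus $F$ is invariant iff $U_Tf_0=f_0$ and $U_Tf_1=-f_1$; ergodicity of $\mu$ makes $f_0$ constant, so non-ergodicity of the product is equivalent to the existence of a nonzero $f_1\in L^2(\mu)$ with $f_1\circ T=-f_1$, i.e.\ an $L^2$ eigenfunction of eigenvalue $-1$.

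The remaining step, and the main (if standard) obstacle, is upgrading such a possibly complex eigenfunction to the $\{\pm1\}$-valued one demanded by (1). From $f_1\circ T=-f_1$ one gets $|f_1|\circ T=|f_1|$, so $|f_1|$ is invariant and hence constant by ergodicity; normalizing, I may assume $|f_1|\equiv1$. Then $f_1^2\circ T=f_1^2$, so $f_1^2$ equals a constant $c$ on the unit circle; choosing a square root, $h:=f_1/\sqrt{c}$ takes values in $\{\pm1\}$ and still satisfies $h\circ T=-h$. Setting $A=\{h=1\}$ gives $T^{-1}A=X\setminus A$, which is condition (1). The closing sentence of the statement merely names this situation. (One could instead invoke the classical fact that a product of ergodic systems is ergodic iff they share no common nontrivial eigenvalue, the only such candidate for $\nu$ being $-1$, but the direct computation above keeps the argument self-contained.)
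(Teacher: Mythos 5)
Your proof is correct and follows essentially the same route as the paper: your decomposition $F(x,i)=f_0(x)+f_1(x)\chi(i)$ is exactly the paper's splitting of an invariant function into the symmetric part $f(x,0)+f(x,1)$ and antisymmetric part $f(x,0)-f(x,1)$, and your normalization of an eigenvalue-$(-1)$ eigenfunction to a $\{\pm1\}$-valued one plays precisely the role of the paper's sign argument. The remaining differences are cosmetic: you prove $(2)\Rightarrow(1)$ directly in complex $L^2$ where the paper proves the contrapositive $\neg(1)\Rightarrow\neg(2)$ with real-valued measurable functions, and your explicit invariant set $B=\{(x,i):\phi(x)=i\}$ of measure $\tfrac12$ replaces the paper's observation that the product system factors onto the non-ergodic $(2,S,\nu)\times(2,S,\nu)$.
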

\begin{proof}
  If $(2, S, \nu)$ is a factor of $(X, T ,\mu)$, then $(X, T, \mu) \times (2, S, \nu)$ has a factor $(2, S, \nu) \times (2, S, \nu)$ which is not ergodic and therefore the product system is not ergodic.

  It remains to show $\neg (1) \implies \neg (2)$. Let $f: X \times 2 \to \mathbb R$ be a $\mu \otimes \nu$-a.e. $T\times S$-invariant measurable function. We want to show that this function is a.e. constant. Since $f$ is invariant, $f(x, 0) = f(Tx, 1)$ and $f(x, 1) = f(Tx, 0)$ hold for a.e. $x$. So $f(x, 0) + f(x, 1)$ is $T$-invariant and hence, by the ergodicity of $T$, a.e. constant. So for some $r \in \mathbb R$ we have $f(x, 0) + f(x, 1) = r$ a.e.
On the other hand, we have $f(x, 0) - f(x, 1) = - (f(Tx, 0) - f(Tx, 1))$. So $f(x, 0) - f(x, 1)$ is a.e. zero, because otherwise it would be a.e. nonzero by the ergodicity of $T$ and then the sign of $f(x, 0) - f(x, 1)$ can be used to form a factor map to $(2, S, \nu)$ which would contradict our starting assumption. So $f(x, 0) = f(x, 1)$ holds a.e. and therefore $f(x, 0) = f(x, 1) = \frac{r}2$.
\end{proof}
\begin{example}
  Let $X=Y$ be the full 5 shift. Then the factor code $\pi:X \to Y$ defined by
$$x = (x_i)_i \mapsto (x_{i+1}+x_{i})_i \pmod{5}$$
is a 5-to-1 map. Note that unlike the previous example, we are taking the sum of two consecutive numbers instead of taking the difference. Let $\mu$ be an ergodic measure on $X$ such that 2 is not its factor. To form a degree joining over $\pi\mu$, we need some auxiliary measure. Let $\eta$ be the unique ergodic measure on the shift space $Z$ consisting of two points $((-1)^{i})_i$ and $((-1)^{i+1})_i$. The image $\lambda$ of $\mu \otimes \eta$ under the map
$$(x, z) \mapsto (x, x+z, x+2z, x+3z, x+4z) \mod5$$
is a degree joining over $\pi\mu$. ($\lambda$ is ergodic because $\mu\otimes\eta$ is, by the previous lemma.) By conditioning on $x$, we can verify that the second margin and the last margin are the same and we denote it by $\mu'$. Also, the third margin and the fourth margin are the same and we denote it by $\mu''$. The measures $\mu, \mu', \mu''$ are all ergodic lifts of $\pi\mu$.
Typically but not always, $\mu, \mu', \mu''$ will be three distinct measures and their multiplicities will be 1, 2, 2.
Whenever $\nu$ is an ergodic measure on $Y$ such that 2 is not its factor, the number of its ergodic lifts is at most 3.
\end{example}
% full mention?
\begin{example}
  Let $\pi: X\to Y$ and $(Z, \eta)$ be from the previous example, but this time we suppose $\mu$ is an ergodic measure on $X$ such that 2 is its factor. Then there is a shift-commuting measurable function $F: X \to Z$ such that $F\mu = \eta$. The image of $\mu$ under the map
$$x \mapsto (x, x+ F(x), x+ 2F(x), x+ 3F(x), x+ 4F(x)) \mod5$$
is a degree joining over $\pi\mu$. Typically but not always, the margins will be all different and their multiplicities will be 1.
\end{example}
% \begin{example}
%   Let $\pi: X\to Y$ be similar to that from the previous example, except this time we let $X=Y$ be the full 3 shift and work modulo 3. Whenever $\nu$ is 
% \end{example}
% % 3 then unique lift? maybe not this example.

% by group

\section{Class degree}

Given a 1-block factor code $\pi:X\to Y$ on a 1-step SFT $X$, the notion of class degree and transition classes were introduced in \cite{all2013classdegrelmaxent}. Given a point $y \in Y$, the fiber $\pi^{-1}(y)$ is divided into finitely many equivalence classes $[x]$ called transition classes over $y$. The equivalence relation is as follows: the two points $x, x' \in \pi^{-1}(y)$ are equivalent if there is a transition from $x$ to $x'$ via finite $X$-words while keeping the same image $y$ infinitely many times to the right and vice versa from $x'$ to $x$. The equivalence relation is denoted by $x \sim x'$ and the equivalence class is denoted by $[x]$.

% Since we want to work with measures that may not have full support, given an ergodic $\nu$ on $Y$, we define the notion of $\nu$-minimal transition blocks as follows. % A transition block $(W,n,M)$ is a $\nu$-minimal transition block if $\nu(W) > 0$ and if it has the smallest depth among all such transition blocks. If $\nu$ has full support, then the $\nu$-minimal transition blocks are exactly the minimal transition blocks.

\begin{theorem}[Consequence of Theorem 4.22 in \cite{all2013classdegrelmaxent}]
  Let $\pi:X \to Y$ be a factor code on an SFT $X$ and $\nu$ an ergodic measure on $Y$. Then $\nu$-almost every point of $Y$ (in fact, every recurrent point that visits every neighborhood of every point in the topological support of $\nu$) has the same finite number of transition classes over it. % If $\pi$ is 1-block and $X$ is 1-step, then this number is the same as the depth of any $\nu$-minimal transition block.
\end{theorem}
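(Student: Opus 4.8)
The plan is to prove the stronger parenthetical assertion directly and deduce the $\nu$-a.e.\ statement from it. Write $N(y)$ for the number of transition classes over $y$, and let $G \subset Y$ be the set of recurrent points whose forward orbit enters every neighborhood of every point of $\supp(\nu)$. First I would note that $\nu(G)=1$: Poincar\'e recurrence handles recurrence, and since $\nu$-a.e.\ point is generic for $\nu$, its empirical averages charge every open set meeting $\supp(\nu)$, so its orbit is dense in $\supp(\nu)$. Because $\nu(G)=1$, it suffices to show that $N$ is a single finite constant on all of $G$; the $\nu$-a.e.\ conclusion is then automatic, and in particular I never need to establish Borel measurability of $N$ on all of $Y$.

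Next I would record that $N$ is $S$-invariant. The shift $T$ restricts to a bijection $\pi^{-1}(y) \to \pi^{-1}(Sy)$, and the defining relation $x \sim x'$ --- existence of transitions through finite $X$-words that fix the image $y$, infinitely often in each direction --- is shift-equivariant, so $T$ carries transition classes over $y$ bijectively onto those over $Sy$. Hence $N(Sy)=N(y)$, and $G$ is itself $S$-invariant, so that $N|_G$ is an invariant function whose constancy we aim to prove.

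The core step is to evaluate $N$ on $G$ by means of Theorem 4.22. Here I would use the finite-window description of the class relation: the number of right-transition classes determined by a block $y_{[i,j]}$ is non-increasing in $j$, and a merger of two classes is witnessed by the occurrence in $y$ of a specific finite configuration, a transition block. A point $y \in G$ sees every neighborhood of every point of $\supp(\nu)$, hence exhibits every transition block occurring in the subsystem supported by $\supp(\nu)$; this forces every realizable merger, driving the count down to the minimal value $c$ furnished by Theorem 4.22, while recurrence supplies the return visits that make $\sim$ symmetric so that the minimum is attained in both directions. This yields $N(y)=c$ for all $y \in G$, with $c<\infty$ by finiteness of the class degree, completing the argument.

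The step I expect to be the main obstacle is reconciling Theorem 4.22, whose cleanest form concerns transitive points of an ambient irreducible system, with the weaker transitivity at hand: here $y$ is transitive only relative to $\supp(\nu)$, which need be neither all of $Y$ nor an SFT, and $X$ itself need not be irreducible. The delicate point is to verify that the minimal transition-class count relevant to $\nu$ is the one computed inside the subsystem carried by $\supp(\nu)$, and that the visiting-plus-recurrence condition defining $G$ is precisely what is needed to realize that minimum. Once this identification is secured, constancy of $N$ on $G$ and its finiteness are exactly what Theorem 4.22 delivers in the relative picture.
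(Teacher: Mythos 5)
The paper offers no argument for this statement at all: it is quoted as a black-box consequence of Theorem 4.22 of \cite{all2013classdegrelmaxent}, so the only ``proof'' to compare against is that citation, and your proposal is attempting something strictly harder. Your first two steps are fine but routine: $\nu(G)=1$ follows from Poincar\'e recurrence plus genericity, and $N(Sy)=N(y)$ from shift-equivariance of $\sim$. The content of the theorem lives entirely in your third step, and there the proposal has a genuine gap which you yourself flag and then defer (``once this identification is secured\dots''). Reconciling the absolute Allahbakhshi--Quas statement with the relative setting is not a technical formality to be postponed; it \emph{is} the theorem. Concretely, two things are missing: (i) your merger mechanism only pushes the count \emph{down} (more visible words force more identifications), but you never address the matching lower bound, i.e.\ why $N(y)$ cannot drop strictly below the putative constant $c$ for some $y \in G$; and (ii) the ``minimal value computed inside the subsystem carried by $\supp(\nu)$'' cannot be obtained by applying Theorem 4.22 to the restricted code $\pi^{-1}(\supp(\nu)) \to \supp(\nu)$, because $\supp(\nu)$ of a general ergodic $\nu$ need not be sofic and $\pi^{-1}(\supp(\nu))$ need not be an irreducible SFT, so the cited theorem simply does not apply there.

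Worse, the statement is false for the set $G$ exactly as you defined it (recurrent points visiting every neighborhood of every point of $\supp(\nu)$, with no requirement that the point lie in $\supp(\nu)$). Take $X \subset \{A,B,1\}^{\mathbb Z}$ the $1$-step SFT forbidding $AB$ and $BA$, let $\pi$ be the $1$-block code $A, B \mapsto 0$, $1 \mapsto 1$, so $Y$ is the full $2$-shift, and let $\nu = \delta_{0^\infty}$. Then $\pi^{-1}(0^\infty) = \{A^\infty, B^\infty\}$ splits into two transition classes (no preimage of $0^\infty$ contains both symbols, so no transition exists), while any recurrent transitive point $y'$ of $Y$ --- which belongs to your $G$, since its orbit enters every neighborhood of $0^\infty$ --- has exactly one transition class over it: preimages choose $A$-blocks or $B$-blocks independently between occurrences of $1$, and one can switch at any of the infinitely many $1$'s to the right. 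So $N$ takes both values $2$ and $1$ on $G$, and your central claim that seeing all blocks of $\supp(\nu)$ drives the count down \emph{to} the value attained over $\supp(\nu)$ has the inequality backwards: words occurring in $y'$ but not in $\supp(\nu)$ can merge classes strictly below that value. The parenthetical must be read as quantifying over recurrent points \emph{of} $\supp(\nu)$ with dense orbit in $\supp(\nu)$ (which suffices for the $\nu$-a.e.\ conclusion, since $\nu$-a.e.\ point lies in $\supp(\nu)$), and any correct argument must use that membership; your mechanism never does.
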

We will call this number the class degree of $\nu$ and denote it by $c_{\pi,\nu}$ or $c_{\nu}$.
If $\nu$ has full support, then the number $c_{\pi,\nu}$ does not depend on $\nu$ and is the class degree of $\pi$, the number of transition classes over any doubly transitive point in $Y$ as defined in \cite{all2013classdegrelmaxent}.
% We want to emphasize that the inverse of the topological support of $\nu$ is in general not an SFT. So we can't easily reduce to the case of...

Given two invariant measures $\mu, \mu'$ on $X$, we have $\pi\mu = \pi\mu'$ if and only if there is a relative joining of the two. We will denote by $C_\pi$ the set of $(x,x') \in X^2$ such that $\pi(x)=\pi(x')$ and $x \sim x'$.

Given two \emph{ergodic} measures $\mu, \mu'$ on $X$, we will say they are \emph{class parallel} if there is a relative joining $\lambda$ of $\mu$ and $\mu'$ such that $\lambda(C_\pi) =1$.
\begin{theorem}
  Let $\pi:X \to Y$ be a factor code on an SFT $X$ and $\nu$ an ergodic measure on $Y$. Let $\mu, \mu'$ be two ergodic lifts of $\nu$. Then the following are equivalent.
  \begin{enumerate}
  \item $(\mu \otimes_\nu \mu')(C_\pi) > 0$.
  \item There is a relative joining $\lambda$ of the two such that $\lambda(C_\pi) > 0$. 
  \item There is an ergodic relative joining $\lambda$ of the two such that $\lambda(C_\pi)=1$.
  \item For $\nu$-a.e. $y \in Y$, the transition classes $[x]$ over $y$ for which $\mu_y([x]) >0$ are precisely the transition classes for which $\mu'_y([x])>0$.
  \item For $\nu$-a.e. $y \in Y$, there is a transition class $[x]$ over $y$ such that $\mu_y([x]) > 0$ and $\mu'_y([x]) >0$.
  \end{enumerate}
\end{theorem}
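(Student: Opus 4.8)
The plan is to pass to the disintegrations $\mu=\int_Y\mu_y\,d\nu(y)$ and $\mu'=\int_Y\mu'_y\,d\nu(y)$ and to prove the cycle $(1)\Rightarrow(3)\Rightarrow(4)\Rightarrow(5)\Rightarrow(1)$ together with $(1)\Leftrightarrow(2)$. Most of these become transparent once everything is written over a single fiber. Since $(\mu\otimes_\nu\mu')_y=\mu_y\otimes\mu'_y$ and $C_\pi$ restricted to $\pi^{-1}(y)^2$ is the union $\bigcup_{[x]}[x]\times[x]$ over the finitely many transition classes over $y$, one has
\begin{equation*}
(\mu\otimes_\nu\mu')(C_\pi)=\int_Y\ \sum_{[x]\text{ over }y}\mu_y([x])\,\mu'_y([x])\,d\nu(y).
\end{equation*}
From this, $(5)\Rightarrow(1)$ is immediate, since a common class on $\nu$-a.e.\ fiber makes the integrand positive a.e.; and $(1)\Rightarrow(2)$ is trivial because $\mu\otimes_\nu\mu'$ is itself a relative joining. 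For $(2)\Rightarrow(1)$ I would observe that $\lambda(C_\pi)>0$ forces $\lambda_y([x]\times[x])>0$ for some class on a positive-measure set of fibers, and since $\lambda_y$ has margins $\mu_y,\mu'_y$ this gives $\mu_y([x]),\mu'_y([x])>0$ there, making the displayed integral positive. This establishes $(1)\Leftrightarrow(2)$.

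The heart of the matter is $(1)\Rightarrow(3)$, which I would prove by ergodic decomposition of the relatively independent joining rather than by constructing a joining on $C_\pi$ by hand. The crucial point is that $C_\pi$ is $T\times T$-invariant: $\pi$ commutes with the shift and the transition-class relation $\sim$ is shift-equivariant, so $(x,x')\in C_\pi$ implies $(Tx,Tx')\in C_\pi$. Writing $\mu\otimes_\nu\mu'=\int\lambda\,d\rho(\lambda)$ for the ergodic decomposition, condition $(1)$ gives $\int\lambda(C_\pi)\,d\rho(\lambda)=(\mu\otimes_\nu\mu')(C_\pi)>0$, so $\lambda(C_\pi)>0$ on a $\rho$-positive set; by ergodicity of each component and invariance of $C_\pi$ this upgrades to $\lambda(C_\pi)=1$ on that set. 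By the argument in Lemma~\ref{lem:ergodic-rel-join-exists}, and because $\mu,\mu',\nu$ are ergodic, $\rho$-a.e.\ component $\lambda$ is again a relative joining of $\mu$ and $\mu'$ over $\nu$. Intersecting the two $\rho$-positive sets yields an ergodic relative joining $\lambda$ of $\mu,\mu'$ over $\nu$ with $\lambda(C_\pi)=1$, which is exactly $(3)$.

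For $(3)\Rightarrow(4)$ I would disintegrate such a $\lambda=\int_Y\lambda_y\,d\nu(y)$; then for $\nu$-a.e.\ $y$ the measure $\lambda_y$ has margins $\mu_y,\mu'_y$ and, by $\lambda(C_\pi)=1$, is supported on $\bigcup_{[x]}[x]\times[x]$. Pushing $\lambda_y$ forward to the finite set of transition classes produces a measure concentrated on the class-diagonal whose two margins are the class distributions of $\mu_y$ and of $\mu'_y$; diagonal support forces these to coincide, so in particular $\mu_y([x])>0\Leftrightarrow\mu'_y([x])>0$ for every class, which is $(4)$ (and in fact gives the stronger equality of class weights). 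Finally $(4)\Rightarrow(5)$ is trivial, since $\mu_y$ is a probability measure and hence charges some class, which by $(4)$ is also charged by $\mu'_y$.

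The main obstacle is the step $(1)\Rightarrow(3)$: one must simultaneously keep the margins equal to $\mu$ and $\mu'$ \emph{and} land inside $C_\pi$. The clean route is to exploit the invariance of $C_\pi$ under the ergodic decomposition together with the margin-preservation of Lemma~\ref{lem:ergodic-rel-join-exists}; attempting instead to build the joining directly by coupling conditional measures within classes would first require the equality of class weights, which at that stage is not yet available (it only emerges afterward, in $(3)\Rightarrow(4)$). A secondary point to check is the measurability of the fiberwise class data $y\mapsto\mu_y([x])$ and of $C_\pi$ itself, which is standard within the Allahbakhshi--Quas framework.
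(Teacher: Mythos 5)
Your proposal is correct and follows essentially the same route as the paper's proof: the crucial step is handled by ergodic decomposition together with the shift-invariance of $C_\pi$ (with margins preserved as in Lemma~\ref{lem:ergodic-rel-join-exists}), and the remaining implications are fiberwise disintegration arguments over $Y$. The only differences are organizational: the paper runs the single cycle $(1)\Rightarrow(2)\Rightarrow(3)\Rightarrow(4)\Rightarrow(5)\Rightarrow(1)$, applying the ergodic-decomposition argument to an arbitrary joining with $\lambda(C_\pi)>0$ rather than to $\mu\otimes_\nu\mu'$ specifically, whereas you prove $(1)\Leftrightarrow(2)$ directly and obtain along the way a slightly stronger form of $(4)$ (equality of the class weights, not just of their supports).
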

\begin{proof}
  The implication $(1) \implies (2)$ is trivial. To see $(2) \implies (3)$, use ergodic decomposition and the invariance of $C_\pi$.

  To see $(3)\implies(4)$, first verify that $\lambda_y$ (disintegration of $\lambda$ by $\pi\circ p_1$) is a coupling of $\mu_y$ and $\mu'_y$ and that $\lambda_y(C_\pi)=1$ for $\nu$-a.e. $y$. Fix $y \in Y$ to be one of such points and let $[x_1],[x_2],\dots,[x_k]$ be all transition classes over $y$. Then $\lambda_y$ is supported on $\cup_{i=1}^k ([x_i]\times[x_i])$. Let $J$ be the set of $i$ for which $\lambda_y([x_i]\times[x_i]) > 0$. Then the transition classes corresponding to $J$ are precisely the transition classes to which the measure $p_1\lambda_y = \mu_y$ gives positive measure. Similarly, this also holds for $p_2\lambda_y = \mu'_y$.

  The implication $(4)\implies(5)$ is trivial.

  To see $(5)\implies(1)$, first observe that $(\mu_y \otimes \mu'_y)(C_\pi) > 0$ holds a.e. and then integrate over $Y$.
\end{proof}
We have listed equivalent conditions for being class parallel. For convenience, we will also list equivalent conditions for its negation.
\begin{theorem}
  Let $\pi:X \to Y$ be a factor code on an SFT $X$ and $\nu$ an ergodic measure on $Y$. Let $\mu, \mu'$ be two ergodic lifts of $\nu$. Then the following are equivalent.
  \begin{enumerate}
  \item $(\mu \otimes_\nu \mu')(C_\pi) = 0$.
  \item For any relative joining $\lambda$ of $\mu$ and $\mu'$ we have $\lambda(C_\pi)=0$.
  \item For $\nu$-a.e. $y \in Y$, there is no transition class $[x]$ over $y$ such that $\mu_y([x]) > 0$ and $\mu'_y([x]) >0$.
  \end{enumerate}
\end{theorem}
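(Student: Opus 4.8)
The plan is to obtain all three equivalences from the preceding theorem on class-parallel measures by passing to contrapositives; the only genuinely new ingredient is an ergodicity dichotomy that upgrades condition (3) from a positive-measure statement to an almost-everywhere one.

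First I would note that conditions (1) and (2) are word-for-word the negations of conditions (1) and (2) of the preceding theorem: $(\mu \otimes_\nu \mu')(C_\pi) = 0$ negates $(\mu \otimes_\nu \mu')(C_\pi) > 0$, and ``$\lambda(C_\pi) = 0$ for every relative joining $\lambda$'' negates ``$\lambda(C_\pi) > 0$ for some relative joining $\lambda$''. Since those two conditions are equivalent, the contrapositive gives $(1) \Leftrightarrow (2)$ at once.

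For $(1) \Leftrightarrow (3)$ I would introduce
$$ B = \{\, y \in Y : \text{no transition class $[x]$ over $y$ has $\mu_y([x]) > 0$ and $\mu'_y([x]) > 0$} \,\}, $$
so that condition (3) reads $\nu(B) = 1$, while condition (5) of the preceding theorem reads $\nu(B) = 0$. By that theorem its negation $\nu(B) > 0$ is equivalent to condition (1) here, so it suffices to prove $\nu(B) > 0 \Leftrightarrow \nu(B) = 1$, which I would deduce from ergodicity once I show that $B$ is $S$-invariant modulo $\nu$-null sets. The fiber structure is shift-equivariant: $\pi^{-1}(Sy) = T\pi^{-1}(y)$, and $C_\pi$ is $T \times T$-invariant (its invariance was already used above), so the transition classes over $Sy$ are exactly the sets $T[x]$ with $[x]$ a class over $y$; moreover the disintegrations satisfy $\mu_{Sy} = T\mu_y$ and $\mu'_{Sy} = T\mu'_y$ for $\nu$-a.e. $y$. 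Hence $\mu_{Sy}(T[x]) = \mu_y([x])$ and likewise for $\mu'$, so $y$ carries a common positively-weighted class iff $Sy$ does, i.e. $S^{-1}B = B$ mod $\nu$. Ergodicity of $\nu$ then forces $\nu(B) \in \{0,1\}$, completing the chain $(1) \Leftrightarrow (3)$.

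The main obstacle I anticipate is the bookkeeping in the equivariance step: one must choose the disintegrations so that $\mu_{Sy} = T\mu_y$ holds $\nu$-a.e. (routine for factor maps of measure-preserving systems) and confirm that $B$ is genuinely measurable. The latter is a statement about the finitely many transition classes over $y$ together with their $\mu_y$- and $\mu'_y$-masses; its measurability is of the same kind already invoked implicitly in the preceding theorem, and can be justified by the analytic/universal measurability of fiber counting developed earlier in the paper, applied one transition class at a time. Everything else is a formal consequence of the preceding theorem.
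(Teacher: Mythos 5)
Your proposal is correct, but the route you take for $(1)\Leftrightarrow(3)$ is genuinely different from the paper's. The paper dispatches $(1)\Leftrightarrow(3)$ as ``trivial'' by a direct disintegration computation: restricted to a fiber product $\pi^{-1}(y)\times\pi^{-1}(y)$, the set $C_\pi$ is the union of $[x]\times[x]$ over the finitely many transition classes $[x]$ over $y$, so
$(\mu\otimes_\nu\mu')(C_\pi)=\int_Y \sum_{[x]\text{ over }y}\mu_y([x])\,\mu'_y([x])\,d\nu(y)$,
and a nonnegative integrand has zero integral if and only if it vanishes $\nu$-a.e.; this yields the almost-everywhere statement (3) directly from (1) with no ergodicity argument at all. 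You instead pass through the contrapositive of condition (5) of the preceding theorem, which only gives $\nu(B)>0$, and then upgrade to $\nu(B)=1$ via a zero--one law: $S$-equivariance of the fibers, $T\times T$-invariance of $C_\pi$, equivariance of disintegrations ($\mu_{Sy}=T\mu_y$ a.e.), and ergodicity of $\nu$. That argument is sound --- each equivariance claim you invoke is standard and consistent with how the paper itself uses the invariance of $C_\pi$ --- but it spends machinery the direct computation makes unnecessary; in particular, the paper's proof of this equivalence does not need $\nu$ to be ergodic, whereas yours does. What your route buys in exchange is an explicit statement of the invariance of the set $B$ and the resulting dichotomy $\nu(B)\in\{0,1\}$, which is conceptually clarifying, and your handling of $(1)\Leftrightarrow(2)$ by negating conditions (1) and (2) of the preceding theorem coincides with the paper's. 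The measurability caveat you raise for $B$ is legitimate but is equally implicit in the paper's conditions (3)--(5), so it is not a gap specific to your argument.
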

\begin{proof}
  The implications $(1)\implies(2)\implies(1)$ follows from the previous theorem. The implications $(1)\implies(3)\implies(1)$ are trivial.
\end{proof}

Being class parallel is an equivalence relation % prove for non-ergodic?
and the third property in the previous theorem above ensures that this equivalence relation partitions the set of ergodic lifts of $\nu$ into at most $c_{\pi,\nu}$ equivalence classes.

We will say an $n$-fold relative joining $\lambda$ is \emph{class separating} if for $\lambda$-a.e. $(x^{(1)}, x^{(2)}, \dots, x^{(n)})$, the points $x^{(1)}, x^{(2)}, \dots, x^{(n)}$ are in $n$ different transition classes over $\pi x^{(1)} = \cdots = \pi x^{(n)}$.
\begin{theorem}\label{thm:class-degree-joining-exists}
  Let $\pi:X \to Y$ be a factor code on an SFT $X$ and $\nu$ an ergodic measure on $Y$. Then there exists an ergodic $c_{\pi,\nu}$-fold class separating relative joining over $\nu$. We will call such a joining a \emph{class degree joining} over $\nu$ with respect to $\pi$.
\end{theorem}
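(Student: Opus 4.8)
The plan is to reduce the infinite-to-one statement to the finite-to-one degree joining already constructed above, by factoring $\pi$ through the \emph{class cover} of $Y$. As usual I would first recode so that $\pi$ is $1$-block and $X$ is a $1$-step SFT, and abbreviate $c = c_{\pi,\nu}$. The obstruction to copying the proof of Theorem~\ref{thm:degree-joining-exists} verbatim is that ``being in distinct transition classes'' is a tail condition rather than a local one: the set of class-separated $c$-tuples in $X^c$ is shift-invariant and Borel but is not closed, so I cannot simply exhibit it as a subshift factoring onto $Y$ and lift $\nu$, the way the mutually separated tuples were handled in the finite-to-one case. The remedy is to move the separation to a place where it becomes genuine separation of points.

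Concretely, I would use the structure theory of transition classes (\cite{all2014structureclass, JungYoo2015decomp}) to produce a cover $q\colon \hat Y \to Y$ together with a factor code $\beta\colon X \to \hat Y$ satisfying $q\circ\beta = \pi$, such that $q$ is finite-to-one of degree $c$ over $\nu$ and the fibers of $\beta$ over a $\nu$-typical point are exactly the transition classes. In particular, for $\nu$-a.e.\ $y$ the $c$ points of $q^{-1}(y)$ are in bijection with the $c$ transition classes over $y$, and two points $x,x'\in\pi^{-1}(y)$ satisfy $x\sim x'$ precisely when $\beta x = \beta x'$. Granting this cover, transition-class separation in $X$ is converted into ordinary separation of points in $\hat Y$.

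With the cover in hand the argument parallels the sofic case of Theorem~\ref{thm:degree-joining-exists}. Applying the general finite-to-one degree joining existence established above (for factor maps between topological dynamical systems, which applies since $d_{q,\nu}=c<\infty$) to $q\colon\hat Y\to Y$ and $\nu$, I obtain an ergodic $c$-fold \emph{separating} relative joining $\hat\lambda$ on $\hat Y^{\,c}$ over $\nu$; its $c$ margins are, for $\hat\lambda$-a.e.\ tuple, $c$ distinct points of $\hat Y$ lying over a common point $y$, that is, one point from each of the $c$ transition classes. I would then lift $\hat\lambda$ through the factor code $\beta^{c}\colon X^{c}\to\hat Y^{\,c}$ to an invariant measure on $X^{c}$ and pass to an ergodic component $\lambda$ that still projects to $\hat\lambda$ (possible because $\hat\lambda$ is ergodic, so almost every ergodic component projects to it). Since $q\circ\beta=\pi$, every tuple charged by $\lambda$ has common $\pi$-image, so $\lambda$ is a relative joining over $\nu$; and since $(\beta^{c})_*\lambda=\hat\lambda$ is separating while the $\beta$-fibers are transition classes, the coordinates of a $\lambda$-typical tuple lie in $c$ distinct transition classes. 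Thus $\lambda$ is the desired ergodic $c$-fold class separating relative joining.

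I expect the main obstacle to be the first of these steps: establishing the cover $q\colon\hat Y\to Y$ with $\beta\colon X\to\hat Y$ having the class-fiber property and with $q$ of degree exactly $c_{\pi,\nu}$ over $\nu$, the delicate case being when $\nu$ does not have full support, so that the ambient structure of \cite{all2014structureclass} must be localized to the topological support of $\nu$. An alternative that avoids the cover would be to form a $c$-fold relatively independent self-joining of a single invariant lift and extract, by ergodic decomposition, a component living on the (invariant, Borel) class-separated set; but this requires an invariant lift whose fiber measures charge all $c$ transition classes simultaneously, and producing one appears to presuppose exactly the class-maximal existence results (Question B$'$) that the present joining is meant to establish. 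I therefore prefer the cover route, where the combinatorial heavy lifting is quarantined in the construction of $\hat Y$ and everything downstream reduces to the already-proven finite-to-one case.
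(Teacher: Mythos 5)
Your reduction is sound from the cover onward: granted a factor code $\beta\colon X\to\hat Y$ and a finite-to-one $q\colon\hat Y\to Y$ with $q\circ\beta=\pi$, $d_{q,\nu}=c_{\pi,\nu}$, and with the $\beta$-fibers over $\nu$-a.e.\ $y$ equal to the transition classes over $y$, then applying the general degree-joining existence theorem to $q$, lifting through $\beta^c$, and passing to an ergodic component does yield a class degree joining. The genuine gap is the cover itself: it is not a technical preliminary to be cited, it is the whole content of the theorem, and it is not available in the generality required. The decomposition of \cite{JungYoo2015decomp} (a factor code on an irreducible SFT factors as a class-degree-one code followed by a finite-to-one code whose degree is the class degree $c_\pi$) identifies $q$-fibers with transition classes only over \emph{doubly transitive} points of $Y$, i.e.\ it serves exactly the case of fully supported $\nu$. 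The theorem you are proving allows an arbitrary ergodic $\nu$, where $c_{\pi,\nu}$ need not equal $c_\pi$ and $\nu$-typical points need not be doubly transitive; over $\supp(\nu)$ the fibers of $q$ need not have $c_{\pi,\nu}$ elements and the fibers of $\beta$ need not be transition classes. Producing a cover ``localized to $\supp(\nu)$'' would mean redoing the structure theory of \cite{all2014structureclass,JungYoo2015decomp} relative to an arbitrary subshift of $Y$, which is at least as hard as the statement at hand --- indeed the paper's introduction singles out the non-fully-supported case as precisely the case that the known reduction to finite-to-one codes does not reach.

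The paper handles the obstruction you correctly diagnosed (class separation is a tail condition, hence not closed) by a direct, self-contained device rather than a cover: the $c$-fold \emph{bi-transition forbidding product} $\overline X^c_\pi$, the set of tuples with common image such that no two coordinates admit a bi-transition over any finite window. Forbidding bi-transitions on every finite window is a closed, shift-invariant condition, so $\overline X^c_\pi$ is a subshift, and every tuple in it is automatically class separated. A pigeonhole argument combined with recurrence and compactness shows that every recurrent point with $c$ transition classes over it lies in the image of $\overline X^c_\pi\to Y$, hence this image contains $\supp(\nu)$, and one concludes by lifting $\nu$ to an ergodic measure on $\overline X^c_\pi$ --- mirroring the proof of Theorem~\ref{thm:degree-joining-exists} but valid for every ergodic $\nu$. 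If you wish to salvage your route, either restrict the claim to fully supported $\nu$ (where it is a legitimate alternative that trades an elementary construction for the machinery of \cite{JungYoo2015decomp}), or construct your cover by hand over $\supp(\nu)$ --- at which point the bi-transition forbidding product is essentially the object you would be forced to write down.
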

For $\nu$-a.e. $y \in Y$, we can give an ordering on the $c$ transition classes over $y$ or more simply, select one transition class over $y$, but in general we cannot do it in a measurable, invariant way. A class degree joining is a generalization and substitute for such selection functions. In order to prove its existence, we first build an auxiliary shift space which is an infinite-to-one analogue of the shift space of mutually separated points used in the proof of existence of a degree joining for finite-to-one factor codes.

Given a 1-block factor code $\pi:X\to Y$ on a 1-step SFT $X$, a \emph{bi-transition} between two $X$-words $U, U'$ (of length $L$) with same image $\pi(U) = \pi(U') = V$ is a pair of $X$-words $W, W'$ such that
\begin{align*}
  \pi(W) = \pi(W') = V \,, \\
  W_0 = U_0 \,,\quad W_{L-1} = U'_{L-1} \,, \\
  W'_0 = U'_0 \,,\quad W'_{L-1} = U_{L-1} \,.
\end{align*}

Let $\overline X^n_\pi$ be the set of points $(x^{(1)}, \dots, x^{(n)}) \in X^n$ such that $\pi(x^{(1)}) = \dots = \pi(x^{(n)})$ and that for each $-\infty < i < j < \infty$ and each $1 \le k < l \le n$, there is no bi-transition between the words $x^{(k)}_{[i,j]}$ and $x^{(l)}_{[i,j]}$.
This set is either empty or a shift space and we will call this the $n$-fold \emph{bi-transition forbidding product} of $X$ with respect to $\pi$.
Note that for each $(x^{(1)}, \dots, x^{(n)}) \in \overline X^n_\pi$, the $n$ points $x^{(1)}, \dots, x^{(n)}$ are in $n$ different transition classes over $\pi(x^{(1)}) = \dots = \pi(x^{(n)})$. % also mutually separated? also forbidding any single transition?

\begin{lemma}
  Let $\pi:X \to Y$ be a 1-block factor code on a 1-block SFT $X$. Given a recurrent point $y \in Y$ with at least $c$ transition classes over it, there is a point in $\overline X^c_\pi$ that maps to it.
\end{lemma}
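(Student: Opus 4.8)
\emph{Plan.} I would establish the existence of a point of $\overline X^c_\pi$ over $y$ by a compactness argument, after first reducing the infinitely many window conditions to a single finite-window condition at each scale. Throughout, write $S$ for the shift on $Y$ and use that $X$ is $1$-step and $\pi$ is $1$-block, so that for a fixed window the existence of a connecting $X$-word over $y_{[i,j]}$ between two prescribed endpoint symbols depends only on those symbols and on the block $y_{[i,j]}$.

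First I would record an \emph{extension lemma}: if $[i,j]\subseteq[-N,N]$ and there is a bi-transition between $x^{(k)}_{[i,j]}$ and $x^{(l)}_{[i,j]}$, then there is one between $x^{(k)}_{[-N,N]}$ and $x^{(l)}_{[-N,N]}$. Indeed, one prepends $x^{(k)}_{[-N,i]}$ and appends $x^{(l)}_{[j,N]}$ to the witnessing word $W$ (and symmetrically for $W'$); the concatenations are legal because $X$ is $1$-step and the symbols match at the junctions. Consequently, being bi-transition-free on $[-N,N]$ is equivalent to being bi-transition-free on every subwindow of $[-N,N]$, and the sets
$$K_N := \{(x^{(1)},\dots,x^{(c)})\in X^c : \pi(x^{(k)})=y \text{ for all } k,\ \text{pairwise bi-transition-free on } [-N,N]\}$$
form a decreasing sequence of closed subsets of the compact space $X^c$. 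A point of $\bigcap_N K_N$ is precisely a point of $\overline X^c_\pi$ over $y$, so it suffices to show each $K_N$ is nonempty.

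The heart of the matter is thus the finite-window claim: \emph{for every $N$ there are $c$ preimages of $y$ that are pairwise bi-transition-free on $[-N,N]$}. I would prove this by contradiction using recurrence. Fix $N$ and let $c'$ be the maximal size of a pairwise bi-transition-free set of preimages of $y$ on $[-N,N]$; this is finite and well defined, since the relation depends only on the finitely many realizable endpoint-symbol pairs, and two preimages sharing an endpoint pair are automatically bi-transition-related (use the common word for both $W$ and $W'$). Suppose $c'<c$ and pick representatives $z^{(1)},\dots,z^{(c)}$ of $c$ distinct transition classes over $y$. Since $y$ is recurrent, $S^{n_j}y\to y$ for some $n_j\to\infty$, so there are positions $R_m\to\infty$ with $y_{[R_m-N,\,R_m+N]}=y_{[-N,N]}$. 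On each window $W_m=[R_m-N,R_m+N]$ the bi-transition relation is identical to that on $[-N,N]$, so the $c>c'$ points cannot be pairwise bi-transition-free on $W_m$; hence some pair $(k_m,l_m)$ is bi-transition-related on $W_m$. By pigeonhole a single pair $(k,l)$ recurs for infinitely many $m$. A bi-transition on $W_m$ supplies both a transition from $z^{(k)}$ to $z^{(l)}$ and one from $z^{(l)}$ to $z^{(k)}$, each confined to $W_m$; as $R_m\to\infty$ these occur infinitely often to the right in both directions, so $z^{(k)}\sim z^{(l)}$, contradicting distinctness of classes. Therefore $c'\ge c$, proving the claim and populating $K_N$ with any $c$ members of the witnessing family.

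The main obstacle is this finite-window claim, and inside it the essential use of recurrence: without it, a single accidental bi-transition near the origin between inequivalent points could persist at every scale, and it is exactly the returns $R_m\to\infty$ that upgrade a window-by-window bi-transition into the genuinely asymptotic (infinitely often to the right, in both directions) phenomenon that defines a common transition class. A secondary point to treat carefully is that $c'$ must be counted among \emph{actual} preimages of $y$, not abstract endpoint pairs, so that the extremal pairwise bi-transition-free family of size $c'\ge c$ consists of genuine points of the fiber.
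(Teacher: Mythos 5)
Your scaffolding is sound: the extension lemma is correct (in fact sharing just the right endpoint symbol already forces a bi-transition, via $W=U$, $W'=U'$), the sets $K_N$ are closed and nested, and $\bigcap_N K_N$ is exactly the set of points of $\overline X^c_\pi$ over $y$; the recurrence-plus-pigeonhole idea is also the right one. The gap is in the deduction ``the $c>c'$ points cannot be pairwise bi-transition-free on $W_m$.'' Your $c'$ is, by your own (necessary) insistence, the maximal size of a pairwise bi-transition-free family of restrictions to $[-N,N]$ of points of $\pi^{-1}(y)$. But the words $z^{(1)}_{W_m},\dots,z^{(c)}_{W_m}$ are restrictions to $W_m$ of points of $\pi^{-1}(y)$; equivalently, after shifting, they are restrictions to $[-N,N]$ of preimages of $\sigma^{R_m}y$, which is a different point of $Y$. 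It is true that $y_{W_m}=y_{[-N,N]}$ makes the bi-transition relation on words over the two windows identical, but that relation is being applied to two different sets of words: $\{x_{W_m}\colon x\in\pi^{-1}(y)\}$ need not be contained in $\{x_{[-N,N]}\colon x\in\pi^{-1}(y)\}$, since the fiber of a merely recurrent $y$ has no shift-invariance. So maximality of $c'$ over the centered window places no constraint on the window $W_m$. This is not a removable technicality: the class representatives can in general only be guaranteed bi-transition-free on windows inside some half-line $[m,\infty)$ with $m$ large; near the origin they may genuinely admit bi-transitions, so an argument forcing freeness at $[-N,N]$ itself is proving something false in that generality. If instead you define $c'$ over abstract words with image $y_{[-N,N]}$, the contradiction step goes through, but then the extremal family no longer consists of fiber points and $K_N$ is not populated --- the same mismatch, moved to the other end of the argument.

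What your recurrence argument honestly proves is this: for infinitely many $m$, the tuple $(z^{(1)},\dots,z^{(c)})$ is pairwise bi-transition-free on $W_m$ (otherwise pigeonhole gives one pair bi-transition-related on windows marching to $+\infty$, hence in the same transition class). To convert such a far-right configuration into an element of $K_N$ you must shift by $R_m$, which destroys the property of mapping to $y$, and then repair it by a limit: choose the $R_m$ among recurrence times so that $\sigma^{R_m}y\to y$, pass to a subsequence along which $\sigma^{R_m}(z^{(1)},\dots,z^{(c)})$ converges, and observe that the limit tuple consists of preimages of $y$ (continuity of $\pi$) and inherits bi-transition-freeness on $[-N,N]$, a finite-window condition. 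With this repair your proof is correct --- but it has then become the paper's proof: the paper obtains freeness on all windows in a half-line $[m,\infty)$ by a single pigeonhole argument, shifts along recurrence times, and takes one compactness limit of the shifted tuples, which renders the $K_N$ scaffolding unnecessary.
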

\begin{proof}
  Let $y \in Y$ be a recurrent point and fix $c$ points $x^{(1)}, \dots, x^{(c)}$ from $c$ different transition classes over $y$. There is $m$ such that for each $n \in \mathbb N$, there is no bi-transition between any two of the $c$ words $x^{(1)}_{[m,m+n]}, \dots, x^{(c)}_{[m,m+n]}$, because otherwise by the pigeon hole's principle, we would be able to pick two points $x^{(i)}, x^{(j)}$ among the $c$ points such that bi-transitions occur infinitely many times to the right between the two points, which contradicts how we chose the $c$ points. We may assume $m=0$ without loss of generality, so that for each $n \in \mathbb N$, there is no bi-transition between any two of the $c$ words $x^{(1)}_{[0,n]}, \dots, x^{(c)}_{[0,n]}$.

Since $y$ is recurrent, there is a sequence $n_k \nearrow \infty$ such that $\sigma^{n_k}(y) \to y$ as $k \to \infty$. Since $X^c$ is compact, we may assume, by passing to a subsequence, that $\sigma^{n_k}(x^{(1)}, \dots, x^{(c)})$ converges in $X^c$. Let $(x'^{(1)}, \dots, x'^{(c)})$ be the limit. It is then easy to verify that for each $1 \le k \le c$, $\pi(x'^{(k)}) = y$ and that $(x'^{(1)}, \dots, x'^{(c)}) \in \overline X^c_\pi$.
\end{proof}

% theory of clean representatives? each recurrent point lift to a recurrent point in each class? and no bi-transition between these points?
% definition of bi-transition conflicts with?

\begin{lemma}
  Let $\pi:X \to Y$ be a 1-block factor code on a 1-block SFT $X$ and $\nu$ an ergodic measure on $Y$ with $c = c_{\pi,\nu}$. Then for each $y \in \supp(\nu)$, there is a point in $\overline X^c_\pi$ that maps to it. In other words, the image of $\overline X^c_\pi$ under the obvious map $\overline X^c_\pi \to Y$ contains the topological support of $\nu$.
\end{lemma}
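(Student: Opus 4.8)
The goal is to upgrade the previous lemma, which guarantees a point of $\overline X^c_\pi$ mapping to any \emph{recurrent} point with at least $c$ transition classes, to a statement covering \emph{every} point of $\supp(\nu)$. The plan is to reduce an arbitrary point $y \in \supp(\nu)$ to a limit of images of recurrent points to which the previous lemma applies, and then to use compactness of $\overline X^c_\pi$ to extract a preimage.

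First I would invoke the class degree theorem (the consequence of Theorem 4.22 in \cite{all2013classdegrelmaxent}) to fix a Borel set of full $\nu$-measure on which every point is recurrent, visits every neighborhood of every point of $\supp(\nu)$, and has exactly $c = c_{\pi,\nu}$ transition classes over it. Such points are in particular recurrent with at least $c$ transition classes, so the previous lemma yields, for each such $y'$, a point $\bar x(y') \in \overline X^c_\pi$ with image $y'$. Since $\nu$ has this full-measure set inside $\supp(\nu)$ and $\supp(\nu)$ is the closure of that set (by definition of topological support, any neighborhood of any $y \in \supp(\nu)$ has positive measure, hence meets the full-measure set), every $y \in \supp(\nu)$ is a limit $y' _k \to y$ of such recurrent points $y'_k$.

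Next I would take the associated sequence $\bar x(y'_k) \in \overline X^c_\pi \subset X^c$. Because $X^c$ is compact, I can pass to a convergent subsequence and let $\bar x = (x^{(1)}, \dots, x^{(c)})$ be its limit. The obvious factor map $\overline X^c_\pi \to Y$ is continuous (it is a restriction of a sliding block code), so the image of $\bar x$ is $\lim_k y'_k = y$. It remains to check that $\bar x$ itself lies in $\overline X^c_\pi$, i.e.\ that the limit still satisfies $\pi(x^{(1)}) = \dots = \pi(x^{(c)}) = y$ and forbids all bi-transitions. The equalities of images are immediate from continuity. For the bi-transition condition, the key point is that the defining property of $\overline X^c_\pi$ is \emph{closed}: the existence of a bi-transition between $x^{(k)}_{[i,j]}$ and $x^{(l)}_{[i,j]}$ depends only on finitely many coordinates, so its absence for every finite window $[i,j]$ and every pair $k<l$ is an intersection of clopen (hence closed) conditions and therefore passes to limits. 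Since each $\bar x(y'_k)$ satisfies all these conditions and they are closed, $\bar x$ does too, so $\bar x \in \overline X^c_\pi$.

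The main obstacle is really just making sure the set $\overline X^c_\pi$ is closed in $X^c$ so that the compactness argument closes; this is exactly the earlier remark that $\overline X^c_\pi$ ``is either empty or a shift space,'' which I would cite directly, since a shift space is by definition closed and shift-invariant. A secondary point worth stating carefully is that $\overline X^c_\pi$ is nonempty to begin with — but this follows as soon as $\supp(\nu)$ is nonempty, which it is. The second sentence of the lemma (about the image containing $\supp(\nu)$) is then a trivial restatement: we have exhibited, for every $y \in \supp(\nu)$, a point of $\overline X^c_\pi$ mapping to it, which is precisely the assertion that $\supp(\nu)$ is contained in the image of $\overline X^c_\pi$ under the obvious map.
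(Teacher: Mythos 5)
Your proof is correct, and it rests on the same two pillars as the paper's: the previous lemma applied to $\nu$-a.e.\ point (which is recurrent with $c = c_{\pi,\nu}$ transition classes, by the class-degree theorem plus Poincar\'e recurrence), and the fact that $\overline X^c_\pi$ is a shift space. Where you genuinely differ is the finishing move. The paper stays downstairs in $Y$: it sets $Y_0$ to be the image of $\overline X^c_\pi$, observes that $Y_0$ is itself a shift space, and argues by contradiction --- if some $y \in \supp(\nu)$ lay outside $Y_0$, then by closedness and shift-invariance of $Y_0$ the word $W = y_{[-n,n]}$ would be forbidden in $Y_0$ for some $n$, yet $\nu(W) > 0$, so by ergodicity $\nu$-a.e.\ point contains $W$ somewhere while also lying in $Y_0$ by the previous lemma, a contradiction. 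You instead work upstairs: you approximate $y$ by good points $y'_k$ (using that a full-measure set meets every neighborhood of every point of $\supp(\nu)$), lift each $y'_k$ into $\overline X^c_\pi$ via the previous lemma, extract a convergent subsequence of the lifts by compactness of $X^c$, and use closedness of $\overline X^c_\pi$ together with continuity of the sliding block code to conclude that the limit is a lift of $y$. The trade-off is mild: your route needs no ergodic-theorem step (the a.e.\ occurrence of a positive-measure word is replaced by the purely measure-theoretic density of full-measure sets in the support) and no shift-invariance of the image, and your explicit check that the bi-transition-forbidding conditions are finite-window, hence closed, is exactly the justification behind the paper's earlier remark that $\overline X^c_\pi$ is a shift space; the paper's route, in exchange, avoids subsequence bookkeeping by packaging the same compactness into the single statement that $Y_0$ is a shift space. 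Both arguments are sound and of comparable depth.
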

\begin{proof}
Let $Y_0$ be the image of $\overline X^c_\pi$ under the obvious map $\overline X^c_\pi \to Y$. Then $Y_0$ is a shift space.
Suppose $y \in \supp(\nu) \setminus Y_0$.
Since $Y_0$ is closed, there is $n$ such that the cylinder $[y_{[-n,n]}]_{-n}$ and $Y_0$ are disjoint.
Since $Y_0$ is shift-invariant, it follows that the word $W:= y_{[-n,n]}$ can never occur in points of $Y_0$.
But since $y \in \supp(\nu)$, we have $\nu(W) > 0$. Therefore, for $\nu$-a.e. $y' \in Y$, $W$ occurs in $y'$ and the point $y'$ is recurrent and there are $c$ transition classes over $y'$. Fix $y'$ to be one of such points. By the previous lemma, $y' \in Y_0$, but this contradicts that $W$ can never occur in points of $Y_0$.
\end{proof}

\begin{proof}[Proof of Theorem~\ref{thm:class-degree-joining-exists}]
  We may assume $\pi:X\to Y$ is a 1-block factor code on a 1-block SFT $X$.

  Consider the factor code $\overline\pi: \overline X^c_\pi \to Y$ induced by $\pi$. Since the image contains $\supp(\nu)$, it can be lifted to an ergodic measure $\lambda$ on $\overline X^c_\pi$. It is then easy to verify that $\lambda$ is a class degree joining over $\nu$ with respect to $\pi$.
\end{proof}

% class parallel?
Given two $n$-fold relative joinings $\lambda, \lambda'$ over $\nu$, we will say they are class parallel if there is an $2n$-fold relative joining $\lambda''$ over $\nu$ such that $p_{[1,n]}\lambda'' = \lambda$ and $p_{[n+1,2n]}\lambda'' = \lambda'$ and that for $\lambda''$-a.e. $(x_1,\dots,x_n,x'_1,\dots,x'_n)$ in $X^n \times X^n$, for each $1\le i \le n$, the two points $x_i, x'_i$ are in the same transition class. Such $\lambda''$ will be called a joining that realizes the class parallel relation between $\lambda$ and $\lambda'$. % name it.
% This is an equivalence relation? yes from relatively independent product and then project it.
% compatible with the relation for ergodic measures on $X$? yes.

\begin{theorem}\label{thm:class-degree-joining-universal}
  Let $\pi:X \to Y$ be a factor code on an SFT $X$ and $\nu$ an ergodic measure on $Y$ with $c = c_{\pi,\nu}$. Let $\lambda$ be a class degree joining over $\nu$. Let $\lambda'$ be an $n$-fold ergodic relative joining over $\nu$. Then there is a function $f: \{1,\dots, n\} \to \{1,\dots,c\}$ such that $p_f\lambda$ is class parallel to $\lambda'$ where $p_f: X^c \to X^n$ is the obvious map induced by $f$. Furthermore, if $\lambda'$ is class separating, we can choose $f$ to be injective.
\end{theorem}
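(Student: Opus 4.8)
The plan is to imitate the proof of Theorem~\ref{thm:degree-joining-universal}, systematically replacing equality of points by transition-class equivalence $\sim$. First I would invoke Lemma~\ref{lem:ergodic-rel-join-exists} to obtain an ergodic relative joining $\lambda''$ of $\lambda$ and $\lambda'$ over $\nu$; this is a measure on $X^c \times X^n$ whose projection to the first $c$ coordinates is $\lambda$ and whose projection to the last $n$ coordinates is $\lambda'$. All the work then takes place on a full-measure subset of $\lambda''$.

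The key structural step is the following. For $\lambda''$-a.e.\ point $(x_1,\dots,x_c,x'_1,\dots,x'_n)$, every coordinate is a pre-image of one common point $y \in Y$, and since $\nu$-a.e.\ $y$ has exactly $c = c_{\pi,\nu}$ transition classes, we may assume $y$ is such a point. Because $\lambda$ is class separating, $x_1,\dots,x_c$ lie in $c$ \emph{distinct} transition classes over $y$, so they exhaust all transition classes over $y$. Consequently each $x'_j$, being itself a pre-image of $y$, is class-equivalent to exactly one of $x_1,\dots,x_c$. This lets me define, for each $j$, a map $g_j$ sending the point to the unique index $i$ with $(x'_j, x_i) \in C_\pi$; measurability of $g_j$ follows from that of $C_\pi$ (the fiber over $i$ is the preimage of $C_\pi$ under the coordinate map $(\,\cdot\,)\mapsto (x'_j,x_i)$). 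Since $C_\pi$ is shift-invariant, each $g_j$ is invariant, hence $\lambda''$-a.e.\ constant by ergodicity, and I set $f(j)$ to that constant value. This gives $x'_j \sim x_{f(j)}$ for $\lambda''$-a.e.\ point and all $j$.

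To conclude that $p_f\lambda$ is class parallel to $\lambda'$, I would push $\lambda''$ forward under the map $(x_1,\dots,x_c,x'_1,\dots,x'_n)\mapsto(x_{f(1)},\dots,x_{f(n)},x'_1,\dots,x'_n)$, obtaining a $2n$-fold relative joining $\lambda'''$ over $\nu$. Its first block of $n$ margins is the pushforward of $\lambda$ under $p_f$, i.e.\ $p_f\lambda$; its second block is $\lambda'$; and for $\lambda'''$-a.e.\ point the $j$-th coordinate of the first block is class-equivalent to the $j$-th coordinate of the second block, by the previous paragraph. Thus $\lambda'''$ realizes the class parallel relation between $p_f\lambda$ and $\lambda'$. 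For the injectivity claim under the extra hypothesis that $\lambda'$ is class separating, suppose $f(j)=f(j')$ for some $j\neq j'$; then $x'_j \sim x_{f(j)} = x_{f(j')} \sim x'_{j'}$, and transitivity of $\sim$ forces $x'_j \sim x'_{j'}$ for $\lambda''$-a.e.\ point. Since the $x'$-margin of $\lambda''$ is $\lambda'$, this contradicts the class separating property of $\lambda'$, so $f$ must be injective.

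I expect the main obstacle to be the structural step of the second paragraph: the passage from ``$\lambda$ is class separating'' to ``$x_1,\dots,x_c$ exhaust the transition classes over $y$'' relies essentially on the fact that $\nu$-a.e.\ $y$ has \emph{exactly} $c$ transition classes, so that $c$ distinct representatives must account for all of them; this is what guarantees that each $x'_j$ is equivalent to one (and, in the finite-to-one analogue, to only one) of the $x_i$. The remaining verifications — measurability and invariance of $g_j$, and the identification of the margins of the pushforward $\lambda'''$ — are routine given the earlier lemmas and the shift-invariance of $C_\pi$.
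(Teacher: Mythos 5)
Your proposal is correct and follows essentially the same route as the paper's own proof: an ergodic relative joining of $\lambda$ and $\lambda'$ via Lemma~\ref{lem:ergodic-rel-join-exists}, the observation that the class-separating coordinates $x_1,\dots,x_c$ exhaust the $c$ transition classes over a.e.\ $y$ so each $x'_j$ is equivalent to a unique $x_i$, the invariance-plus-ergodicity argument to make the index functions constant, the pushforward realizing the class parallel relation, and the same transitivity argument for injectivity when $\lambda'$ is class separating. No gaps.
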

\begin{proof}
    There is an ergodic relative joining, say $\lambda''$, of $\lambda$ and $\lambda'$ over $\nu$ in the sense of Lemma~\ref{lem:ergodic-rel-join-exists}. $\lambda''$ is a measure on $X^{c} \times X^{n}$. For $\lambda''$-a.e. $(x_1, \dots, x_c, x'_1, \dots, x'_n)$, we have that the points $x_1, \dots, x_c$ are in $c$ distinct transition classes over $\pi(x_1)$ and that $\pi(x_1)$ only has $c$ transition classes and that $x'_1,\dots,x'_n$ are pre-images of $\pi(x_1)$.  Therefore in particular, the point $x'_1$ belongs to one and only transition class among $[x_1], \dots, [x_c]$. Therefore there is a measurable function $g: X^{c} \times X^{n} \to \{1,\dots, c\}$ such that $x'_1$ and $x_{g(x_1, \dots, x_c, x'_1, \dots, x'_n)}$ are in same transition class for $\lambda''$-almost all $(x_1, \dots, x_c, x'_1, \dots, x'_n)$.

Since $g$ is invariant and $\lambda''$ is ergodic, $g$ must be constant $\lambda''$-a.e.. Define $f(1)$ to be this constant. Define $f(2), \dots, f(n)$ similarly. Then we have
\begin{align*}
  x'_1 &\sim x_{f(1)}\\
  x'_2 &\sim x_{f(2)}\\
  &\dots\\
  x'_n &\sim x_{f(n)}
\end{align*}
This means that the image of $\lambda''$ under the map
$$(x_1, \dots, x_c, x'_1, \dots, x'_n) \mapsto (x'_1,\dots,x'_n, x_{f(1)},\cdots,x_{f(n)})$$
realizes the class parallel relation between $\lambda'$ and $p_f \lambda$.

Furthermore, if $\lambda'$ is class separating, then in particular, $x'_1$ and $x'_2$ are in different transition classes, and hence $f(1) \neq f(2)$ and the same can be said for any two indexes $1\le i < j \le n$.
\end{proof}

\begin{theorem}
  Let $\pi:X \to Y$ be a factor code on an SFT $X$ and $\nu$ an ergodic measure on $Y$ with $c = c_{\pi,\nu}$. Let $\lambda, \lambda'$ be class degree joinings over $\nu$. Then there is a permutation $f: \{1,\dots, c\} \to \{1,\dots,c\}$ such that $p_f\lambda$ is class parallel to $\lambda'$.
\end{theorem}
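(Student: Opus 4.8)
The plan is to reduce everything to the universality theorem just proved, exactly as the uniqueness of degree joinings (Theorem~\ref{thm:degree-joining-unique}) was reduced to its universality counterpart in the finite-to-one setting. The key observation is that a class degree joining is, by its very definition, an ergodic $c$-fold relative joining over $\nu$ that is moreover class separating; this is precisely the hypothesis under which the \emph{furthermore} clause of Theorem~\ref{thm:class-degree-joining-universal} yields an \emph{injective} comparison function rather than a merely arbitrary one.

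Concretely, I would apply Theorem~\ref{thm:class-degree-joining-universal} with the given class degree joining $\lambda$ playing the role of the reference class degree joining and with $\lambda'$ playing the role of the $n$-fold ergodic relative joining, taking $n = c$. Since $\lambda'$ is itself a class degree joining it is in particular ergodic and class separating, so the theorem supplies a function $f: \{1,\dots,c\} \to \{1,\dots,c\}$ that is injective and for which $p_f\lambda$ is class parallel to $\lambda'$. It then remains only to observe that an injective self-map of the finite set $\{1,\dots,c\}$ is automatically a bijection, so $f$ is a permutation, which is exactly the conclusion sought.

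In contrast to the finite-to-one case, where the surjectivity of $f$ had to be argued separately from the separatedness of $\lambda'$ (the contradiction argument inside Theorem~\ref{thm:degree-joining-unique}, producing two equal coordinates), here that step has already been absorbed into the universality statement, so no additional argument by contradiction is needed. I therefore do not anticipate any genuine obstacle: the only point requiring care is to confirm that the definition of class degree joining really does make $\lambda'$ ergodic, $c$-fold, and class separating, so that the strengthened form of Theorem~\ref{thm:class-degree-joining-universal} applies verbatim and the injective $f$ it produces can be read off as a permutation.
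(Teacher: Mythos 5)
Your proposal is correct and is essentially identical to the paper's own proof: both invoke the \emph{furthermore} clause of Theorem~\ref{thm:class-degree-joining-universal} with $\lambda'$ (ergodic and class separating by definition of class degree joining) as the $c$-fold relative joining to obtain an injective $f$, and then conclude $f$ is a permutation since an injective self-map of a finite set is a bijection. No gap; nothing further is needed.
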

\begin{proof}
    There is a function $f: \{1,\dots, c\} \to \{1,\dots,c\}$ such that $\lambda'$ is class parallel to $p_f \lambda$ and we can choose $f$ to be injective. Since the domain and the codomain of $f$ have the same size, $f$ is also surjective.
\end{proof}
Therefore class degree joining over $\nu$ is unique up to permutation and class parallel change.

\begin{theorem}
  Let $\pi:X \to Y$ be a factor code on an SFT $X$ and $\nu$ an ergodic measure on $Y$ with $c = c_{\pi,\nu}$. Let $\lambda$ be a class degree joining over $\nu$. Given any ergodic lift $\mu$ of $\nu$, it is class parallel to at least one of the margins of $\lambda$.
\end{theorem}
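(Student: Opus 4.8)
The plan is to apply the universality theorem (Theorem~\ref{thm:class-degree-joining-universal}) in its simplest nontrivial case, namely $n=1$. First I would observe that a single ergodic lift $\mu$ of $\nu$ is nothing other than a $1$-fold ergodic relative joining over $\nu$: the set $X^1_\pi$ is all of $X$, the margin condition $p_1\mu=\mu$ is automatic, and $\pi p_1\mu=\pi\mu=\nu$ holds by hypothesis, while the ergodicity of $\mu$ supplies the required ergodicity of the joining. Thus $\lambda':=\mu$ satisfies the hypotheses placed on the second joining in Theorem~\ref{thm:class-degree-joining-universal}.

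Next I would feed the class degree joining $\lambda$ together with $\lambda'=\mu$ into that theorem to obtain a function $f\colon\{1\}\to\{1,\dots,c\}$ such that $p_f\lambda$ is class parallel to $\mu$. Since $n=1$, the induced map $p_f\colon X^c\to X^1=X$ is simply $(x_1,\dots,x_c)\mapsto x_{f(1)}$, so $p_f\lambda=p_{f(1)}\lambda$ is exactly the $f(1)$-th margin of $\lambda$. Hence $\mu$ is class parallel to a margin of $\lambda$, which is the assertion to be proved. (The optional injectivity clause of Theorem~\ref{thm:class-degree-joining-universal} is vacuous here, as any $f$ on a one-element domain is injective.)

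The only point requiring a small check is that the notion of class parallel for $n$-fold relative joinings specializes correctly, when $n=1$, to the notion of class parallel for two ergodic measures. I would verify this by unwinding the definitions: a $2$-fold relative joining $\lambda''$ over $\nu$ realizing the class parallel relation between $\mu$ and $p_{f(1)}\lambda$ has $p_1\lambda''=\mu$ and $p_2\lambda''=p_{f(1)}\lambda$, and places full measure on pairs $(x,x')$ lying in a common transition class over $\pi x=\pi x'$; that is precisely the condition $\lambda''(C_\pi)=1$, which defines class parallelism for two ergodic measures. Here $p_{f(1)}\lambda$ is indeed ergodic, being a margin of the ergodic joining $\lambda$, so both measures in question are ergodic lifts of $\nu$ and the specialized definition applies.

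I expect essentially no obstacle: all of the genuine work — constructing an ergodic relative joining of $\lambda$ with $\lambda'$ in the sense of Lemma~\ref{lem:ergodic-rel-join-exists}, the measurable index-selection function $g$, and the argument that its invariance under the ergodic joining forces it to be constant — is already carried out inside Theorem~\ref{thm:class-degree-joining-universal}. The present statement is therefore an immediate corollary of that theorem evaluated at $n=1$, and the proof is little more than the bookkeeping needed to identify $p_f\lambda$ with a single margin.
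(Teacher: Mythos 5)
Your proposal is correct and follows exactly the paper's own argument: the paper likewise observes that $\mu$ is a $1$-fold ergodic relative joining over $\nu$ and applies Theorem~\ref{thm:class-degree-joining-universal} to conclude that $\mu$ is class parallel to a margin of $\lambda$. Your additional bookkeeping (identifying $p_f\lambda$ with the $f(1)$-th margin and checking that the class parallel notion for $1$-fold joinings agrees with the one for ergodic measures) is just a more explicit rendering of the same one-line proof.
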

\begin{proof}
  $\mu$ is a 1-fold ergodic relative joining over $\nu$ and so Theorem~\ref{thm:class-degree-joining-universal} applies to it and therefore $\mu$ is class parallel to one of the margins of $\lambda$.
\end{proof}
Therefore if $[\mu_1], \dots, [\mu_k]$ are all the class parallel equivalence classes for the ergodic lifts of $\nu$, then we have
$$\{[p_i\lambda]: 1\le i \le c\} = \{[\mu_i]: 1\le i \le k\}$$
\begin{theorem}\label{thm:margins}
  Let $\pi:X \to Y$ be a factor code on an SFT $X$ and $\nu$ an ergodic measure on $Y$ with $c = c_{\pi,\nu}$. Let $\lambda$ and $\lambda'$ be class degree joinings over $\nu$. Then after applying a permutation to $\lambda'$, we have that the $i$'th margin of $\lambda$ is class parallel to the $i$'th margin of $\lambda'$ for each $1 \le i \le c$. Also, if $\mu_1, \dots, \mu_c$ are ergodic lifts of $\nu$ such that $\mu_i$ is class parallel to the $i$'th margin of $\lambda$ for each $i$, then there is a class degree joining over $\nu$ whose $i$'th margin is $\mu_i$ for each $i$.
\end{theorem}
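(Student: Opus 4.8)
The statement has two halves. The plan is to derive the first from the permutation theorem just proved and to establish the second by an explicit kernel-gluing construction followed by an ergodic decomposition.

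For the first half, the immediately preceding theorem supplies a permutation $f$ of $\{1,\dots,c\}$ for which the reordered joining $p_f\lambda$ is class parallel to $\lambda'$. The plan is to descend this relation to the individual margins. By definition there is a $2c$-fold relative joining $\Gamma$ over $\nu$ realizing the class parallel relation, so that $p_{[1,c]}\Gamma = p_f\lambda$, $p_{[c+1,2c]}\Gamma = \lambda'$, and for $\Gamma$-a.e. $(z_1,\dots,z_c,z'_1,\dots,z'_c)$ the points $z_i$ and $z'_i$ lie in a common transition class for every $i$. Projecting $\Gamma$ to the coordinate pair $(i,c+i)$ produces a relative joining of the $i$'th margin $p_{f(i)}\lambda$ of $p_f\lambda$ with the $i$'th margin $p_i\lambda'$ of $\lambda'$; since all points share a common image and $z_i\sim z'_i$, this joining is supported on $C_\pi$, so $p_{f(i)}\lambda$ is class parallel to $p_i\lambda'$. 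Both are ergodic, being margins of ergodic joinings, so the notion applies. Relabeling, after permuting the coordinates of $\lambda'$ by $f^{-1}$ the $i$'th margin of $\lambda$ becomes class parallel to the $i$'th margin of $\lambda'$ for every $i$, which is the first claim.

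For the second half, fix ergodic lifts $\mu_1,\dots,\mu_c$ of $\nu$ with $\mu_i$ class parallel to $p_i\lambda$ for each $i$. Since $p_i\lambda$ and $\mu_i$ are ergodic lifts of $\nu$ that are class parallel, the characterization of the class parallel relation yields an ergodic relative joining $\rho_i$ of $p_i\lambda$ and $\mu_i$ over $\nu$ with $\rho_i(C_\pi)=1$. Disintegrating $\rho_i$ over its first coordinate gives a probability kernel $\kappa_i$, defined $p_i\lambda$-a.e., with $\kappa_i(x,\cdot)$ supported on the transition class of $x$ over $\pi(x)$; because $\rho_i$ is invariant, this kernel is equivariant, in the sense that the pushforward of $\kappa_i(x,\cdot)$ under $T$ equals $\kappa_i(Tx,\cdot)$ for a.e. $x$. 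I then glue these kernels onto $\lambda$ by setting, on $X^c\times X^c$,
\[
\Lambda(dx_1\cdots dx_c\, dx'_1\cdots dx'_c)=\lambda(dx_1\cdots dx_c)\prod_{i=1}^{c}\kappa_i(x_i,dx'_i).
\]
A short computation on box functions, using the equivariance of each $\kappa_i$ and the invariance of $\lambda$, shows that $\Lambda$ is invariant. By construction $p_{[1,c]}\Lambda=\lambda$, the $(c+i)$'th margin of $\Lambda$ is $\int\kappa_i(x,\cdot)\,d(p_i\lambda)(x)=\mu_i$, all $2c$ coordinates share the common image coming from $\lambda$, and $x_i\sim x'_i$ a.e. Setting $\lambda^{\mathrm{new}}:=p_{[c+1,2c]}\Lambda$ therefore produces a $c$-fold relative joining over $\nu$ whose $i$'th margin is $\mu_i$; moreover it is class separating, since the $x_i$ occupy $c$ distinct transition classes (as $\lambda$ is class separating) and $x'_i\sim x_i$.

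The remaining obstacle, and the main one, is that $\lambda^{\mathrm{new}}$ need not be ergodic, so it is not yet a class degree joining. The plan is to pass to an ergodic component $\eta$ of $\lambda^{\mathrm{new}}$ and verify that it still has the prescribed margins. Class separatedness and being a relative joining over $\nu$ are invariant full-measure properties and so survive for a.e. ergodic component; the delicate point is margin preservation. Here I would use that each $\mu_i$ is ergodic: writing the ergodic decomposition $\lambda^{\mathrm{new}}=\int\eta\,dP(\eta)$ gives $\mu_i=\int p_i\eta\,dP(\eta)$ with every $p_i\eta$ ergodic, being a pushforward of an ergodic measure, so this exhibits a representation of $\mu_i$ by a measure on ergodic measures of $X$; uniqueness of the ergodic decomposition forces $p_i\eta=\mu_i$ for $P$-a.e. $\eta$. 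Intersecting the finitely many null sets over $i$, a generic ergodic component $\eta$ is an ergodic, class separating, $c$-fold relative joining over $\nu$ whose $i$'th margin is $\mu_i$ for every $i$, hence a class degree joining with the required margins. I expect the equivariance of the glued kernels and this margin-preservation step to be the two points requiring the most care.
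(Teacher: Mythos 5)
Your proof is correct. The first half coincides with the paper's argument: both invoke the preceding permutation theorem and then project the realizing $2c$-fold joining onto coordinate pairs $(i,c+i)$ to descend class parallelism to individual margins. The second half, however, takes a genuinely different route. The paper proceeds by induction, swapping out one margin at a time: it takes an ergodic joining $\rho$ realizing the class parallel relation between $\mu_1$ and $p_1\lambda$, applies Lemma~\ref{lem:ergodic-rel-join-exists} to produce an \emph{ergodic} relative joining $\rho'$ of $\rho$ and $\lambda$ over their common factor $(X,p_1\lambda)$ (so that the duplicated coordinates agree a.e.), and then projects away the duplicated coordinate; ergodicity is automatic at every step because factors of ergodic measures are ergodic, so no ergodic decomposition is ever needed. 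You instead replace all $c$ margins simultaneously, gluing the equivariant disintegration kernels $\kappa_i$ of the realizing joinings $\rho_i$ onto $\lambda$; this produces an invariant, class separating relative joining with the right margins that may fail to be ergodic, and you then recover ergodicity by passing to an ergodic component, using uniqueness of the ergodic decomposition (equivalently, extremality of the ergodic measures $\mu_i$) to show that a.e. component retains the margins $\mu_i$. Your extremality step and your equivariance computation are both valid, and your observation that class separation and the relative-joining property are invariant Borel full-measure conditions, hence inherited by a.e. ergodic component, is exactly right. What each approach buys: the paper's induction reuses machinery already established (Lemma~\ref{lem:ergodic-rel-join-exists}) and never leaves the ergodic category, making the verification shorter; your simultaneous construction is symmetric in the coordinates, avoids the induction, and isolates a reusable general principle (ergodic margins survive passage to ergodic components), at the cost of invoking the somewhat more technical apparatus of equivariant disintegration kernels. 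The two constructions are in fact cousins: your glued measure $\Lambda$ is a relatively independent product of the $\rho_i$ over $\lambda$, while the paper's $\rho'$ at each induction step is an ergodic component of the analogous two-factor relatively independent joining.
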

\begin{proof}
  Apply a permutation to $\lambda'$ to make it class parallel to $\lambda$. Then let $\lambda''$ be a joining that realizes this class parallel relation. The image of $\lambda''$ under the map
  $$(x_1,\dots,x_c, x'_1,\dots,x'_c) \mapsto (x_i,x'_i)$$
  is a joining that realizes the class parallel relation between $p_i \lambda$ and $p_i \lambda'$.

  For the latter part, by induction, it is sufficient to construct a class degree joining $\overline \lambda$ over $\nu$ such that $\mu_1 = p_1 \overline\lambda$ and $p_i \lambda = p_i \overline\lambda$ for the rest $2 \le i \le c$. Let $\rho$ be an ergodic joining that realizes the class parallel relation between $\mu_1$ and $p_1 \lambda$. Let $\rho'$ be an ergodic relative joining of $\rho$ and $\lambda$ in the sense of Theorem~\ref{lem:ergodic-rel-join-exists}. % specify, over what.
For $\rho'$-a.e. $(x',x'', x_1,\dots,x_c)$, we have that $x' \sim x'' = x_1$ and that $x_1, \dots, x_c$ are in different transition classes over $\pi(x_1) = \dots = \pi(x_c)$. This means that the image of $\rho'$ under the map
$$(x',x'', x_1,\dots,x_c) \mapsto (x', x_2,\dots,x_c)$$
is class separating. This image is the desired class degree joining with the right margins.
\end{proof}

\section{Class multiplicity}

\begin{definition}
  Let $\pi:X\to Y$ be a factor code on an SFT $X$. Let $\mu$ be an ergodic measure on $X$. Then the \emph{class multiplicity}, denoted $m_\pi(\mu)$, of $\mu$ with respect to $\pi$ is defined by
  $$m_\pi(\mu) := \#\{i : 1\le i \le c,\ p_i \lambda \sim \mu\}$$
  where $\lambda$ is a class degree joining over $\pi\mu$ and $\sim$ denotes the class parallel relation.
\end{definition}
Since class degree joining is unique up to permutation and class parallel relation, the notion of class multiplicity is well defined.
Also, if $\mu$ and $\mu'$ are two ergodic measures on $X$ and if they are class parallel, then $m_\pi(\mu) = m_\pi(\mu')$.

\begin{theorem}
  Let $\pi:X \to Y$ be a factor code on an SFT $X$. If $[\mu_1], \dots, [\mu_k]$ are all the equivalence classes for the ergodic lifts of an ergodic measure $\nu$ on $Y$, then
  $$c_{\pi,\nu}= \sum_{i=1}^k m_\pi([\mu_i])$$
\end{theorem}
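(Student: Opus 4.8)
The plan is to fix a single class degree joining $\lambda$ over $\nu$ and use it simultaneously to read off every class multiplicity $m_\pi([\mu_j])$, so that the asserted identity reduces to counting how the $c := c_{\pi,\nu}$ margins of $\lambda$ are distributed among the $k$ class parallel equivalence classes.

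First I would note that since each $\mu_j$ is a lift of $\nu$ we have $\pi\mu_j=\nu$, so a class degree joining over $\pi\mu_j$ is the same thing as a class degree joining over $\nu$. Because class degree joining over $\nu$ is unique up to permutation and class parallel change, the quantity $\#\{i : 1\le i\le c,\ p_i\lambda\sim\mu_j\}$ is independent of which class degree joining is used: permuting $\lambda$ leaves the count unchanged, and replacing a margin $p_i\lambda$ by a class parallel measure preserves the relation $p_i\lambda\sim\mu_j$ because class parallelism is an equivalence relation. Hence, for the one fixed $\lambda$ and for every $j$, we have $m_\pi([\mu_j])=\#\{i : 1\le i\le c,\ p_i\lambda\sim\mu_j\}$.

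Next I would verify that the sets $S_j:=\{i : 1\le i\le c,\ p_i\lambda\sim\mu_j\}$, for $1\le j\le k$, form a partition of $\{1,\dots,c\}$. Each margin $p_i\lambda$ is ergodic, being the image of the ergodic measure $\lambda$ under the factor map $p_i$, and it satisfies $\pi p_i\lambda=\nu$, so it is an ergodic lift of $\nu$; by hypothesis the classes $[\mu_1],\dots,[\mu_k]$ exhaust all such equivalence classes, so $p_i\lambda$ lies in exactly one of them. This shows the $S_j$ cover $\{1,\dots,c\}$, while disjointness is immediate since $p_i\lambda\sim\mu_j$ and $p_i\lambda\sim\mu_{j'}$ would give $\mu_j\sim\mu_{j'}$ by transitivity, forcing $j=j'$.

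Summing over $j$ then gives
$$\sum_{j=1}^k m_\pi([\mu_j])=\sum_{j=1}^k |S_j|=\bigl|\{1,\dots,c\}\bigr|=c=c_{\pi,\nu},$$
which is the desired equality. The one genuinely delicate point is the well-definedness step that lets us evaluate all multiplicities against a single joining $\lambda$; this is exactly where the uniqueness of the class degree joining up to permutation and class parallel change, together with the fact that class parallelism is an equivalence relation, is needed. Everything else is elementary counting over a partition.
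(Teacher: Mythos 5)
Your proof is correct and is exactly the argument the paper has in mind: the paper states this theorem without proof, treating it as immediate from the definition of class multiplicity once one knows (from the preceding section) that the class degree joining is unique up to permutation and class parallel change and that each ergodic margin $p_i\lambda$ is an ergodic lift of $\nu$. Your write-up simply fills in that counting argument---evaluating all multiplicities against one fixed joining $\lambda$ and partitioning $\{1,\dots,c_{\pi,\nu}\}$ by the equivalence classes---so there is nothing to add.
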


\begin{lemma}
  Let $Q$ be a measure on $\{1,\dots,c\}^c$ such that
  $$Q(\{(x_1, \dots,x_c): x_i \neq x_j \text{ for all } 1\le i< j\le c \})=1$$
  and that for each $1\le i<j \le c$, the two margins $p_iQ$ and $p_jQ$ are either equal or mutually singular.
  Let $P_1,\dots,P_k$ be all distinct margins of $Q$. ($k \le c$) Define
  $$I_1 = \{i: P_1(i) > 0\,, 1\le i \le c \}$$
  $$J_1 = \{i: p_iQ = P_1\,, 1\le i\le c\}$$
  Then $I_1, \dots, I_k$ form a partition of $\{1,\dots,c\}$ and the size of $I_i$ is the same as that of $J_i$ and each $P_i$ is the uniform distribution on $I_i$. For each atom $(x_1, \dots, x_c)$ of $Q$, we have $\{x_i: i \in J_1\} = I_1$ and likewise for $J_2, \dots, J_k$.
\end{lemma}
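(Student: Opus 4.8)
The plan is to exploit the fact that, because the $c$ coordinates take values in $\{1,\dots,c\}$ and are $Q$-a.e. pairwise distinct, every atom of $Q$ is necessarily a permutation of $\{1,\dots,c\}$; that is, for $Q$-a.e. point $(x_1,\dots,x_c)$ the assignment $i \mapsto x_i$ is a bijection of $\{1,\dots,c\}$ onto itself. This single observation drives the whole argument, and every subsequent claim reduces to elementary counting once this permutation structure is made explicit.

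First I would check that both families partition $\{1,\dots,c\}$. Since distinct margins are mutually singular and $\{1,\dots,c\}$ is finite, mutual singularity is exactly disjointness of supports, so $I_1,\dots,I_k$ are pairwise disjoint. Because $Q$-a.e. point is a permutation, every value $v \in \{1,\dots,c\}$ occurs in some coordinate $i$, i.e. $v \in \supp(p_i Q)$, which is one of the $I_j$; hence $\bigcup_j I_j = \{1,\dots,c\}$ and the $I_j$ partition it. The $J_j$ partition trivially, since each coordinate has exactly one margin, and that margin is one of $P_1,\dots,P_k$.

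Next I would pin down the cardinalities. For a $Q$-typical permutation, the values $\{x_i : i \in J_1\}$ are distinct (as it is a permutation) and each lies in $\supp(P_1) = I_1$, so $|J_1| \le |I_1|$, and likewise for every class. Summing these inequalities and using $\sum_j |J_j| = c = \sum_j |I_j|$ forces $|J_j| = |I_j|$ for all $j$ simultaneously. Equality of cardinalities then upgrades the inclusion $\{x_i : i \in J_1\} \subseteq I_1$ to the equality $\{x_i : i \in J_1\} = I_1$ asserted in the statement (and similarly for $J_2,\dots,J_k$), since $|I_1|$ distinct elements of the $|I_1|$-element set $I_1$ must exhaust it. For uniformity, I would fix $v \in I_1$ and count: for each $Q$-typical permutation the map $i \mapsto x_i$ restricts to a bijection $J_1 \to I_1$, so exactly one $i \in J_1$ satisfies $x_i = v$; integrating the indicator $\mathbf{1}[x_i = v]$ summed over $i \in J_1$ against $Q$ gives $\sum_{i \in J_1} P_i(v) = 1$. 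Since $P_i = P_1$ for every $i \in J_1$, this reads $|J_1|\, P_1(v) = 1$, whence $P_1(v) = 1/|I_1|$ for every $v \in I_1$, i.e. $P_1$ is uniform on $I_1$; the analogous computation applies to each $P_j$.

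I expect the main difficulty to be bookkeeping rather than conceptual. The two points requiring care are that mutual singularity on a finite set is precisely disjointness of supports (so distinct margins cannot overlap), and that the local inequalities $|J_j| \le |I_j|$ must be combined with \emph{both} partition identities to force the sizes to match for every class at once, rather than class by class. Everything else is a direct consequence of the permutation structure of the atoms.
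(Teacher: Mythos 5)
Your proof is correct and follows essentially the same route as the paper's: the key counting inequality $\#J_j \le \#I_j$ (obtained because an atom's coordinates over $J_j$ are distinct atoms of $P_j$), summed against $\sum_j \#J_j = c$ and the disjointness of supports, followed by a double-counting argument for uniformity. The only cosmetic differences are that you establish the partition property of the $I_j$ before the cardinality equalities (the paper deduces it afterward from $\sum_j \#I_j = c$), and your integration of $\sum_{i\in J_1}\mathbf{1}[x_i=v]$ against $Q$ is the unfolded form of the paper's computation of $F(Q\otimes U)$ by conditioning on each factor.
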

\begin{proof}
  Since distinct margins are mutually singular, we have
  $$\#I_1 + \dots + \#I_k \le c$$
  Let $(x_1, \dots, x_c)$ be an atom of $Q$. Then $x_i$, for $i \in J_1$, are $\#J_1$ distinct atoms of $P_1$. Therefore $\#I_1 \ge \#J_1$. Similarly, $\#I_i \ge \#J_i$ for each $i$, but since
  $$\#J_1 + \dots + \#J_k = c$$
  we must have $\#I_i = \#J_i$ and
  $$\#I_1 + \dots + \#I_k = c$$
  And therefore $I_1, \dots, I_k$ form a partition of $\{1,\dots,c\}$.
  Since $\#I_i = \#J_i$, the points $x_i$, for $i \in J_1$, are all the atoms of $P_1$.

  Let $U$ be the uniform distribution on $J_1$. Define $F: \{1,\dots,c\}^c \times J_1 \to \{1,\dots,c\}$ by
  $$(x_1,\dots,x_c, i)\mapsto x_i$$
  Then the measure $F(Q \otimes U)$ is equal to $P_1$ by conditioning on $U$, but $F(Q \otimes U)$ is also equal to the uniform distribution on $I_1$ by conditioning on $Q$.
\end{proof}

\begin{theorem}
Let $\pi:X \to Y$ be a factor code on an SFT $X$. Let $[\mu_1],\dots,[\mu_k]$ be all the class parallel equivalence classes for the ergodic lifts of an ergodic measure $\nu$ on $Y$. Let $m_1,\dots, m_k$ be their class multiplicities and let $\mu_{1,y}, \dots \mu_{c,y}$ be their disintegrations over $Y$. Then for $\nu$-a.e. $y\in Y$, the fiber $\pi^{-1}(y)$ consists of $c$ transition classes $C_{i,j}(y)$, for $1\le i \le k$ and $1\le j \le m_i$, such that
\[
\mu_{1,y}(C_{i,j}(y)) = 
\begin{cases}
  \frac1{m_1} & \text{if } i=1\\
  0 & \text{if } i \neq 1
\end{cases}
\]
and likewise for $\mu_2,\dots,\mu_k$.
\end{theorem}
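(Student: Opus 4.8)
The plan is to reduce the statement to the combinatorial lemma preceding this theorem by pushing a carefully chosen class degree joining, fiber by fiber, onto the finite set of transition classes. The first step is to fix a good class degree joining. Starting from any class degree joining $\lambda_0$ over $\nu$ (which exists by Theorem~\ref{thm:class-degree-joining-exists}), each margin $p_i\lambda_0$ is an ergodic lift of $\nu$ and hence lies in exactly one class $[\mu_t]$; by the definition of class multiplicity the number of indices $i$ with $[p_i\lambda_0]=[\mu_t]$ equals $m_t$. Applying the second part of Theorem~\ref{thm:margins} with target margins $\mu_t$ (which is class parallel to $p_i\lambda_0$ whenever $[p_i\lambda_0]=[\mu_t]$), I obtain a class degree joining $\lambda$ whose $i$-th margin is literally equal to the appropriate $\mu_t$. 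Writing $J_t=\{i : p_i\lambda=\mu_t\}$, we then have $\#J_t=m_t$ and $\sum_t m_t=c$, so $J_1,\dots,J_k$ partition $\{1,\dots,c\}$. The reason for replacing class-parallel margins by genuinely equal ones is that disintegrations of equal measures agree on the nose, which is exactly what the combinatorial lemma will require.

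Next, for $\nu$-a.e.\ $y$ I fix an arbitrary enumeration $T_1(y),\dots,T_c(y)$ of the $c$ transition classes over $y$ and let $\ell_y\colon\pi^{-1}(y)\to\{1,\dots,c\}$ record the class label. Let $\lambda_y$ be the disintegration of $\lambda$ over $Y$ via $\pi\circ p_1$, let $\Phi_y\colon X^c\to\{1,\dots,c\}^c$ send $(x_1,\dots,x_c)\mapsto(\ell_y(x_1),\dots,\ell_y(x_c))$, and set $Q_y:=(\Phi_y)_*\lambda_y$. Since $\lambda$ is class separating, $\lambda_y$ is supported on tuples whose $c$ coordinates lie in $c$ distinct transition classes, so $Q_y$ is supported on tuples with distinct coordinates. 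Its $i$-th margin is $p_iQ_y=(\ell_y)_*(p_i\lambda)_y$, and for $i\in J_t$ this equals $(\ell_y)_*\mu_{t,y}$, the distribution of $\mu_{t,y}$ over the transition classes over $y$.

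I then verify the two hypotheses of the preceding lemma for $\nu$-a.e.\ $y$. For $i,j$ in the same $J_t$ the margins $p_iQ_y$ and $p_jQ_y$ coincide, because $p_i\lambda=p_j\lambda=\mu_t$ forces $(p_i\lambda)_y=(p_j\lambda)_y=\mu_{t,y}$ for $\nu$-a.e.\ $y$. For $i\in J_s$, $j\in J_t$ with $s\neq t$, the lifts $\mu_s,\mu_t$ are not class parallel, so by the equivalent conditions for the negation of class parallelism (in particular condition~(3) there) no transition class over $y$ is charged by both $\mu_{s,y}$ and $\mu_{t,y}$ for $\nu$-a.e.\ $y$; hence $p_iQ_y$ and $p_jQ_y$ have disjoint supports and are mutually singular. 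Since there are only finitely many groups and pairs, all these a.e.\ statements hold simultaneously on a full-measure set of $y$. Applying the combinatorial lemma to $Q_y$, its distinct margins are precisely $P_t=(\ell_y)_*\mu_{t,y}$ for $t=1,\dots,k$, its groups of equal-margin coordinates are precisely $J_1,\dots,J_k$, and the lemma yields that the supports $I_t=\{t' : \mu_{t,y}(T_{t'}(y))>0\}$ partition $\{1,\dots,c\}$, that $\#I_t=\#J_t=m_t$, and that each $P_t$ is uniform on $I_t$. Translating back, $\mu_{t,y}$ assigns mass $1/m_t$ to each of the $m_t$ transition classes in $I_t$ and mass $0$ to every transition class in some $I_s$ with $s\neq t$; declaring $C_{t,1}(y),\dots,C_{t,m_t}(y)$ to be the transition classes in $I_t$ gives exactly the asserted description of the fiber.

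I expect the main obstacle to be conceptual rather than computational: recognizing that, once a class degree joining is pushed down, the entire problem lives on the finite set of transition classes, and—crucially—arranging via Theorem~\ref{thm:margins} that margins lying in a common class are \emph{equal} rather than merely class parallel. Without this the dichotomy demanded by the combinatorial lemma (margins equal or mutually singular) would fail, since class-parallel disintegrations agree only on their supports and not on the actual masses. The measurability concerns are comparatively mild, because the combinatorial lemma is applied separately on each fiber and the resulting grouping of transition classes into the $I_t$ is intrinsic, independent of the auxiliary enumeration $\ell_y$.
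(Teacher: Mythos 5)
Your proposal is correct and follows essentially the same route as the paper's own proof: fix a class degree joining whose margins are arranged (via Theorem~\ref{thm:margins}) to literally equal the representatives $\mu_t$ with multiplicities $m_t$, disintegrate over $Y$, push each fiber measure onto $\{1,\dots,c\}^c$ via a labeling of the transition classes, and apply the preceding combinatorial lemma. Your write-up is in fact somewhat more careful than the paper's, since you explicitly verify the lemma's equal-or-mutually-singular hypothesis (using the negation-of-class-parallel criterion) where the paper only asserts it.
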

\begin{proof}
  Let $\{\lambda_y\}_{y\in Y}$ be a disintegration of a class degree joining over $\nu$. We may assume that the first $m_1$ margins of $\lambda$ are $\mu_1$ and the next $m_2$ margins are $\mu_2$ and so on. For $\nu$-a.e. $y\in Y$, the fiber $\pi^{-1}(y)$ consists of $c$ transition classes and the first $m_1$ margins of $\lambda_y$ are $\mu_{1,y}$ and the next $m_2$ margins of $\lambda_y$ are $\mu_{2,y}$ and so on, and the measures $\mu_{1,y}, \dots, \mu_{c,y}$ are all supported on $\pi^{-1}(y)$ and each transition class over $y$ has positive measure for at most one of the measures $\mu_{1,y}, \dots, \mu_{c,y}$, and $\lambda_y$ is class separating. Fix $y$ to be such a point.

  Give an ordering to the $c$ transition classes over $y$ and let $F: \pi^{-1}(y) \to \{1,\dots,c\}$ be the measurable map induced by the ordering. Let $F': (\pi^{-1}(y))^c \to \{1,\dots,c\}^c$ be the map induced by $F$. Let $Q = F'(\lambda_y)$. Then $Q$ satisfies the conditions of the previous lemma and the first $m_1$ margins of $Q$ are $F(\mu_1)$ and the next $m_2$ margins of $Q$ are $F(\mu_2)$ and so on.

By the lemma, there are exactly $m_1$ transition classes that $\mu_1$ gives positive measure to and it gives $\frac1{m_1}$ to each of the $m_1$ transition classes. Let $C_{1,j}(y)$, for $1\le j \le m_1$, be those transition classes. Define the rests $C_{i,j}(y)$ similarly.
\end{proof}

\begin{theorem}
  Let $\pi:X \to Y$ be a factor code on an SFT $X$. Let $\mu$ be an ergodic measure on $X$ and $m$ its class multiplicity. Let $\{\mu_y\}_{y \in Y}$ be the disintegration of $\mu$ over $Y$. Then
  \begin{enumerate}
  \item For $\pi\mu$-a.e. $y$, the measure $\mu_y$ assigns $\frac1m$ to $m$ transition classes over $y$ and zero to the rest.
  \item $m$ is the maximum number such that there is an $m$-fold class separating relative joining of $\mu, \dots, \mu$.
  \item $(\mu \otimes_{\pi\mu} \mu) \{(x, x'): x \sim x'\} = \frac1m$
  \end{enumerate}
\end{theorem}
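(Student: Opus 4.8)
The plan is to mirror the three-part proof of the finite-to-one multiplicity theorem (the one asserting the analogous statements (1)--(3) for ordinary multiplicity), but systematically replace ``point'' by ``transition class'' and ``equality of points'' by the class parallel relation $\sim$. The structural backbone is a class degree joining $\lambda$ over $\nu := \pi\mu$ and its disintegration $\{\lambda_y\}$ over $Y$, together with the preceding theorem describing the transition classes $C_{i,j}(y)$. First I would invoke that preceding theorem: for $\nu$-a.e.\ $y$ it tells us that the fiber $\pi^{-1}(y)$ splits into $c$ transition classes, and that $\mu_y$ assigns positive measure to exactly $m$ of them (namely those $C_{1,j}(y)$ with $1 \le j \le m$ coming from the equivalence class $[\mu]$), giving mass $\frac1m$ to each and zero to the rest. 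This is precisely statement (1), so part (1) is essentially a restatement of the previous theorem and requires only the bookkeeping of identifying $m = m_1$ with the class multiplicity of $\mu$.

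For part (2), I would first obtain an $m$-fold class separating relative joining of $\mu,\dots,\mu$ by projecting a class degree joining over $\nu$ onto the $m$ coordinates whose margins are class parallel to $\mu$; by definition of class multiplicity there are exactly $m$ such coordinates, and class separation is inherited from that of the class degree joining. To rule out an $(m+1)$-fold class separating relative joining $\lambda'$ of $\mu,\dots,\mu$, I would argue by contradiction exactly as in the finite-to-one case: for $\lambda'$-a.e.\ $(x_1,\dots,x_{m+1})$ the points lie in $m+1$ distinct transition classes over $y := \pi x_1$, and each $x_i$ is in the support of $\mu_y$, so $\mu_y$ would charge at least $m+1$ distinct transition classes over $\nu$-a.e.\ $y$. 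This contradicts part (1), which says $\mu_y$ charges exactly $m$ transition classes. The only care needed is to check that ``class separating'' for a joining of copies of $\mu$ genuinely forces the supports into distinct classes $\nu$-a.e., which follows by disintegrating $\lambda'$ over $Y$ and using that $p_i\lambda'_y = \mu_y$.

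For part (3), I would disintegrate the relatively independent self-joining $\mu \otimes_\nu \mu$ over $Y$ as $\int_Y (\mu_y \otimes \mu_y)\, d\nu(y)$, and compute the fiberwise quantity $(\mu_y \otimes \mu_y)\{(x,x') : x \sim x'\}$. By part (1), $\mu_y$ is supported on $m$ transition classes each of mass $\frac1m$; the event $x \sim x'$ over a fixed $y$ is exactly the union $\bigcup_j (C_{1,j}(y) \times C_{1,j}(y))$ of the diagonal blocks of transition classes, so its $\mu_y \otimes \mu_y$-measure is $\sum_{j=1}^m \left(\frac1m\right)^2 = \frac1m$. Integrating the constant $\frac1m$ over $Y$ against $\nu$ yields $\frac1m$, which is statement (3).

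The main obstacle I anticipate is not any single computation but ensuring the equivalence relation $\sim$ is handled measurably throughout: the set $C_\pi = \{(x,x') : \pi x = \pi x',\ x \sim x'\}$ must be used in place of the diagonal, and in part (3) one must know that the fiberwise event $\{x \sim x'\}$ coincides $\nu$-a.e.\ with the union of diagonal transition-class blocks so that its measure is genuinely $\sum_j \mu_y(C_{1,j}(y))^2$. This is guaranteed by the earlier theorem characterizing class parallel measures and by the preceding theorem's explicit description of the $C_{i,j}(y)$, so once those are invoked the argument is a faithful transcription of the finite-to-one proof with $\sim$ replacing $=$.
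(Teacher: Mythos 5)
Your parts (1) and (3) are handled the same way the paper handles them: (1) is read off from the preceding theorem on the classes $C_{i,j}(y)$, and (3) follows by the fiberwise computation $(\mu_y \otimes \mu_y)\{x \sim x'\} = \sum_{j=1}^{m} \mu_y(C_{1,j}(y))^2 = m \cdot \frac{1}{m^2} = \frac1m$ followed by integration over $Y$.

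The genuine gap is in your existence argument for (2). You project a class degree joining over $\nu = \pi\mu$ onto the $m$ coordinates whose margins are class parallel to $\mu$, and claim this yields an $m$-fold class separating relative joining of $\mu,\dots,\mu$. But those margins are only \emph{class parallel} to $\mu$; nothing forces them to equal $\mu$. In the infinite-to-one setting the equivalence class $[\mu]$ typically contains many distinct ergodic measures (this is precisely why class maximal measures are a nontrivial notion), and a class degree joining controls its margins only up to the class parallel relation. So your projection is a class separating joining of $m$ measures lying in $[\mu]$, not necessarily of $\mu,\dots,\mu$, which is what statement (2) demands. The paper closes exactly this hole by invoking Theorem~\ref{thm:margins}: since $\mu$ is an ergodic lift of $\nu$ class parallel to each of those $m$ margins, there exists another class degree joining over $\nu$ whose corresponding margins are literally $\mu$, and projecting \emph{that} joining gives the required object. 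Your write-up never performs this replacement, so existence is not established as stated.

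Your upper-bound argument for (2), on the other hand, is correct but takes a different route from the paper. The paper takes a hypothetical ergodic $(m+1)$-fold class separating joining of $\mu,\dots,\mu$, embeds it into a class degree joining via the universality theorem (Theorem~\ref{thm:class-degree-joining-universal}, with $f$ injective), and contradicts the definition of class multiplicity. You instead disintegrate over $Y$ and combine part (1) with a pigeonhole argument: a.e.\ fiber measure $\lambda'_y$ has all coordinate marginals equal to $\mu_y$, hence a.e.\ tuple lies in the union of the $m$ classes charged by $\mu_y$, so among $m+1$ coordinates two must share a transition class, contradicting class separation. This is more elementary and bypasses universality, at the cost of making (2) depend on (1); the paper's version keeps (2) independent of the fiberwise description.
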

\begin{proof}
  Let $\nu = \pi\mu$. The third property follows from the first property which in turn follows from the previous theorem, so we only need to prove the second property.

Let $\lambda'$ be a class degree joining over $\nu$. We may assume $\mu = p_i \lambda'$ for $1 \le i \le m$, by Theorem~\ref{thm:margins}. Let $\lambda$ be the projection of $\lambda'$ to the first $m$ coordinates. Then it is an $m$-fold class separating relative joining of $\mu, \dots, \mu$.

Suppose $\lambda''$ is an $(m+1)$-fold class separating relative joining of $\mu, \dots,\mu$. We may assume that it is ergodic. Then by Theorem~\ref{thm:class-degree-joining-universal}, we may assume it is a projection of a class degree joining $\lambda'''$ to the first $m+1$ coordinates. Since the first $m+1$ margins of $\lambda'''$ is $\mu$, we can conclude that the class multiplicity of $\mu$ is at least $m+1$, which contradicts our assumption.
\end{proof}

\section{Thermodynamic formalism}

\begin{lemma}
  Let $\lambda$ be a class degree joining over an ergodic measure $\nu$ on $Y$ and let $\mu$ a (possibly non-ergodic) invariant measure that is class parallel to the first margin of $\lambda$. Then there is a $(c_\nu+1)$-fold relative joining $\lambda'$ such that its projection to the first $c$ coordinates is $\lambda$ and its last margin is $\mu$ and that $x_1 \sim x$ holds for $\lambda'$-a.e $(x_1,\dots,x_c, x)$.
\end{lemma}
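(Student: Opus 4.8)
The plan is to extract a two-coordinate joining from the class parallel hypothesis and then glue it onto $\lambda$ along the shared ergodic margin $\eta := p_1\lambda$. Write $c = c_\nu$. Since $\eta$ is an ergodic lift of $\nu$, I read the hypothesis that $\mu$ is class parallel to $\eta$, in this possibly non-ergodic setting, as the existence of a relative joining $\rho$ on $X^2$ whose first margin is $\mu$, whose second margin is $\eta$, and with $\rho(C_\pi)=1$; in particular $\rho$-a.e. pair $(x,w)$ satisfies $\pi(x)=\pi(w)$ and $x\sim w$, and consequently $\pi\mu=\nu$.

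Next I would set up two disintegrations over the common factor $(X,\eta)$. Disintegrating $\rho$ over its second coordinate gives $\rho=\int_X \rho_w\otimes\delta_w\,d\eta(w)$, where, because $\rho(C_\pi)=1$, for $\eta$-a.e. $w$ the conditional $\rho_w$ is carried by $\{x:\pi(x)=\pi(w),\ x\sim w\}$. Disintegrating $\lambda$ over its first coordinate gives $\lambda=\int_X \delta_w\otimes\hat\lambda_w\,d\eta(w)$, where $\hat\lambda_w$ is the conditional law of $(x_2,\dots,x_c)$ given $x_1=w$. I then define the candidate measure on $X^{c+1}$, with coordinates $(x_1,\dots,x_c,x)$, by the relatively independent gluing
$$\lambda' \;=\; \int_X \delta_w\otimes\hat\lambda_w\otimes\rho_w\,d\eta(w),$$
where $\delta_w$ governs $x_1$, $\hat\lambda_w$ governs $(x_2,\dots,x_c)$, and $\rho_w$ governs $x$; thus conditionally on the shared point $w=x_1$ the block $(x_2,\dots,x_c)$ and the new coordinate $x$ are made independent.

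It then remains to verify the four required properties, all routine once the gluing is in place. Invariance of $\lambda'$ follows from the equivariance of the disintegrations together with the invariance of $\eta,\rho,\lambda$. Projecting onto the first $c$ coordinates integrates out $\rho_w$ and returns $\int_X\delta_w\otimes\hat\lambda_w\,d\eta(w)=\lambda$, so $p_{[1,c]}\lambda'=\lambda$; projecting onto the last coordinate integrates out $x_1$ and the $\hat\lambda_w$ block and returns $\int_X\rho_w\,d\eta(w)=p_1\rho=\mu$. Finally, for $\lambda'$-a.e. $(x_1,\dots,x_c,x)$ we have $x_1=w$ and $x$ drawn from $\rho_w$, hence $x\sim w=x_1$ and $\pi(x)=\pi(w)=\pi(x_1)$; this is both the desired relation $x_1\sim x$ and the check that $\lambda'$ is genuinely a relative joining over $\nu$.

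The main obstacle, and the reason this is stated separately from the earlier universality arguments, is precisely that $\mu$ need not be ergodic, so Lemma~\ref{lem:ergodic-rel-join-exists} cannot be invoked to manufacture the joining. The resolution is that the factor along which we glue is the margin $\eta=p_1\lambda$, which \emph{is} ergodic; the relatively independent joining over a common factor exists for arbitrary Borel probability measures, so no ergodicity of $\mu$, nor of the resulting $\lambda'$, is needed. The only points deserving a word of care are the existence of the two disintegrations, which is immediate since all spaces are standard Borel and all measures are Borel probability, and the reading of ``class parallel'' for a non-ergodic $\mu$ as the joining condition $\rho(C_\pi)=1$, the natural extension of the definition given earlier for ergodic measures.
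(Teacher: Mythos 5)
Your proposal is correct and follows essentially the same route as the paper: the paper also takes a joining $\rho$ realizing the class parallel relation between $p_1\lambda$ and $\mu$, forms a relative joining of $\lambda$ and $\rho$ over the common factor given by the first-coordinate projections, and then drops the redundant duplicated coordinate. Your explicit relatively independent gluing via disintegrations over $\eta=p_1\lambda$ is just a concrete instantiation of the paper's ``any relative joining'' step, and your observation that ergodicity of $\mu$ is never needed is exactly the point that makes the paper's construction legitimate.
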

\begin{proof}
  Let $\rho$ be a joining that realizes the class parallel relation between $p_1\lambda$ and $\mu$. Let $\rho'$ be any relative joining of $\lambda$ and $\rho$ under the factor maps $p_1: X^c \to X$ and $p_1: X^2 \to X$ in the sense of Theorem~\ref{lem:ergodic-rel-join-exists}. Then for $\rho'$-a.e. $(x_1,\dots,x_c, x'_1,x'_2)$ we have $x_1 = x'_1 \sim x'_2$. Therefore the image of $\rho'$ under the map $$(x_1,\dots,x_c, x'_1,x'_2) \mapsto (x_1,\dots,x_c, x'_2)$$ is a joining with the desired properties.
\end{proof}

\begin{theorem}
  Given an ergodic measure $\mu$ on $X$, the set $S_\mu$ of all (possibly non-ergodic) invariant measures class parallel to it is non-empty, convex and compact.
\end{theorem}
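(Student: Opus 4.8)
The plan is to treat the three properties in turn; non-emptiness and convexity are immediate, and all the work is in compactness. For non-emptiness, I would observe that the diagonal joining (the image of $\mu$ under $x \mapsto (x,x)$) is a relative joining of $\mu$ with itself that is carried by $C_\pi$, since $\pi(x) = \pi(x)$ and $x \sim x$ trivially; hence $\mu$ is class parallel to itself and $\mu \in S_\mu$. For convexity, if $\mu', \mu'' \in S_\mu$ are witnessed by invariant relative joinings $\lambda', \lambda''$ over $\nu := \pi\mu$ with first margin $\mu$, second margins $\mu', \mu''$, and $\lambda'(C_\pi) = \lambda''(C_\pi) = 1$, then for $t \in [0,1]$ the measure $t\lambda' + (1-t)\lambda''$ is again an invariant relative joining over $\nu$ with first margin $\mu$, second margin $t\mu' + (1-t)\mu''$, and full measure on $C_\pi$; so $t\mu' + (1-t)\mu'' \in S_\mu$.

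For compactness, a naive weak-$*$ limit of two-fold witnessing joinings fails, because mass can escape onto the closed ``different class'' set in the limit. Instead I would realize $S_\mu$ as a continuous image of a compact set of higher-order joinings. Fix a class degree joining $\lambda_0$ over $\nu$ whose first margin is class parallel to $\mu$ (possible by permuting the margins of any class degree joining, using that every ergodic lift of $\nu$ is class parallel to some margin). Let $D \subseteq X^{c+1}$ be the set of $(x_1,\dots,x_{c+1})$ with $(x_j, x_{c+1}) \in \overline X^2_\pi$ for every $2 \le j \le c$; this is a finite intersection of copies of the bi-transition-forbidding set, hence closed and shift-invariant. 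Setting $\mathcal J := \{\lambda' \text{ invariant on } X^{c+1} : p_{[1,c]}\lambda' = \lambda_0 \text{ and } \lambda'(D) = 1\}$, I claim $S_\mu = p_{c+1}(\mathcal J)$. The inclusion $p_{c+1}(\mathcal J) \subseteq S_\mu$ holds because, for $\lambda' \in \mathcal J$ and $\lambda'$-a.e.\ point, the coordinates $x_1,\dots,x_c$ occupy all $c$ transition classes over their common image ($\lambda_0$ being class separating), while $D$ forces $x_{c+1}$ into a class distinct from those of $x_2,\dots,x_c$, so $x_{c+1} \sim x_1$; the $(1,c+1)$-margin then witnesses that $p_{c+1}\lambda'$ is class parallel to the first margin of $\lambda_0$, and hence to $\mu$ by transitivity of the class parallel relation (realized by relative products of witnessing joinings). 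The reverse inclusion uses the preceding lemma: given $\mu' \in S_\mu$, which is class parallel to the first margin of $\lambda_0$, the lemma produces a $(c+1)$-fold relative joining $\lambda'$ with $p_{[1,c]}\lambda' = \lambda_0$, last margin $\mu'$, and $x_1 \sim x_{c+1}$ a.e., whence $x_{c+1} \not\sim x_j$ for $2 \le j \le c$, giving $\lambda'(D) = 1$ by the key observation below.

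The one genuinely non-formal point, and the main obstacle, is that $\mathcal J$ is closed. The constraint $p_{[1,c]}\lambda' = \lambda_0$ is closed by continuity of the projection, and $\{\lambda' : \lambda'(D) = 1\}$ is closed precisely because $D$ is closed: for closed $F$ one has $\limsup_n \lambda'_n(F) \le \lambda'(F)$, so $\lambda'_n(F) = 1$ forces $\lambda'(F) = 1$. The reason the witnessing condition may be phrased through the closed set $D$ rather than the non-closed set $\{x_{c+1} \sim x_1\}$ is a recurrence fact: for an invariant measure on $X^2_\pi$, the set of pairs in the same transition class (equivalently, exhibiting bi-transitions infinitely often) has the same measure as the set of pairs whose orbit meets the open bi-transition set at all, so ``$x_{c+1}\not\sim x_j$ almost everywhere'' is equivalent to ``$\lambda'$ gives full measure to $\overline X^2_\pi$ in the coordinates $(j,c+1)$.'' This is proved by Poincar\'e recurrence: almost every point that ever meets the bi-transition set returns to it infinitely often, so positive-but-finite occurrence is null. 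Granting this, $\mathcal J$ is a closed, hence compact, subset of the compact space of invariant measures on $X^{c+1}$, and $S_\mu = p_{c+1}(\mathcal J)$ is compact as a continuous image. The passage from the pointwise transition-class condition (not closed) to the closed bi-transition-forbidding condition is exactly what forces the detour through the $(c+1)$-fold joining and the fixed class degree joining $\lambda_0$, rather than a direct limit of two-fold witnesses.
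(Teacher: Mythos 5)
Your proof is correct and is essentially the paper's own argument in different packaging: the paper fixes a class degree joining whose first margin \emph{is} $\mu$ (via Theorem~\ref{thm:margins}), lifts a convergent sequence $\mu_i \to \mu_*$ in $S_\mu$ to $(c+1)$-fold joinings using the same preceding lemma, kills the open bi-transition sets $U_\ell$ by the same Poincar\'e-recurrence observation, and passes to a weak-$*$ subsequential limit --- which is precisely your ``$S_\mu = p_{c+1}(\mathcal J)$ with $\mathcal J$ closed'' argument run sequentially, with your closed set $D$ playing the role of the complement of the paper's open set $U$. One small repair: the definition of $\mathcal J$ should also include the closed constraint that $\lambda'$ be a relative joining, i.e.\ $\lambda'(X^{c+1}_\pi)=1$, because when $c_{\pi,\nu}=1$ the set $D$ imposes no condition at all and $p_{c+1}(\mathcal J)$ would then contain measures (e.g.\ second margins of product measures $\lambda_0 \otimes \rho$) that are not even lifts of $\pi\mu$; for $c \ge 2$ this constraint is implied by $\lambda'(D)=1$ together with $p_{[1,c]}\lambda' = \lambda_0$, so nothing else in your argument changes.
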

\begin{proof}
  It is non-empty because $\mu$ is in it.

  Suppose $\mu_1, \mu_2 \in S_\mu$. Let $\mu_3$ be a convex combination of $\mu_1, \mu_2$ and write $\mu_3 = \alpha_1 \mu_1 + \alpha_2 \mu_2$. Let $\lambda_1, \lambda_2$ be joinings that realize class parallel relation from $\mu$ to $\mu_1, \mu_2$. Then $\alpha_1\lambda_1 + \alpha_2\lambda_2$ is a joining that realizes the class parallel relation between $\mu$ and $\mu_3$. Therefore $S_\mu$ is convex.

  Suppose $(\mu_i)_i$ is a sequence in $S_\mu$ that converges to a measure $\mu_*$. Let $\nu = \pi\mu$. Then we also have $\nu = \pi\mu_i = \pi\mu_*$ and $\mu_*$ is invariant.

  Let $c$ be the class degree of $\nu$. Let $\lambda$ be a class degree joining over $\nu$. We may assume that its first margin is $\mu$. By the previous lemma, there is a $(c+1)$-fold relative joining $\lambda_i$ such that its projection to the first $c$ coordinates is $\lambda$ and its last margin is $\mu_i$ and that $x_1 \sim x$ holds for $\lambda_i$-a.e $(x_1,\dots,x_c, x)$.

  Let $U_\ell \subset X^{c+1}$ be the set of $(x_1,\dots,x_c, x)$ for which there is $-\infty < i < j < \infty$ such that the restrictions of $x$ and $x_\ell$ to the region $[i,j]$ have the same image and that there is a bi-transition between the two. Let $U$ be the union of the $c-1$ open sets $U_2, \dots, U_c$.

  If $\lambda_i(U_2) > 0$, then Poincare's recurrence theorem says that for $\lambda_i$-a.e. points in $U_2$ we get $x \sim x_2$, but since $x \sim x_1$, this would imply $x_1 \sim x_2$ which contradicts the fact that $\lambda$ is class separating. Therefore $\lambda_i(U_2) = 0$ and in fact $\lambda_i(U) =0$.

  By passing to a subsequence, we may assume that $\lambda_i$ converges to some measure $\lambda_*$. Then $\lambda_*$ is a relative joining over $\nu$ such that its projection to the first $c$ coordinates is $\lambda$ and its last margin is $\mu_*$.
  Since $U$ is open, we get $\lambda_*(U) = 0$. Since $\lambda$ is class separating, this forces $\lambda_*(x_1 \sim x) = 1$. This means that the image of $\lambda_*$ under the map
  $$(x_1,\dots,x_c, x) \mapsto (x_1,x)$$
  is a joining that realizes the class parallel relation between $\mu$ and $\mu_*$. Therefore $\mu_* \in S_\mu$ and $S_\mu$ is compact.
\end{proof}

\begin{theorem}
  Given an ergodic measure $\mu$ on $X$ and a continuous function $f$ on $X$, let $S_{\mu,f}$ be the set of measures $\mu'' \in S_\mu$ for which $$h(\mu'') + \int f d\mu'' \ge h(\mu') + \int f d\mu'$$ for all $\mu' \in S_\mu$. Then $S_{\mu,f}$ is non-empty, convex and compact.
\end{theorem}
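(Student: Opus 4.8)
The plan is to recognize $S_{\mu,f}$ as the set of maximizers of an affine, upper semicontinuous functional over the compact convex set $S_\mu$, and then invoke the standard properties of such a maximization problem.

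First I would record that, by the previous theorem, $S_\mu$ is non-empty, convex, and weak$^*$-compact. On this set I consider the \emph{pressure functional} $P(\mu') := h(\mu') + \int f\, d\mu'$. Two features of $P$ drive the whole argument. Its \emph{affinity}: the measure-theoretic entropy map $\mu' \mapsto h(\mu')$ is affine on the simplex of invariant measures, and $\mu' \mapsto \int f\, d\mu'$ is linear, so $P$ is affine. Its \emph{upper semicontinuity}: since $X$ is an SFT, and hence expansive, the entropy map is upper semicontinuous in the weak$^*$ topology, while $\mu' \mapsto \int f\, d\mu'$ is continuous because $f$ is continuous on the compact space $X$; a sum of an upper semicontinuous function and a continuous function is upper semicontinuous. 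Finally, $P$ is bounded above on $S_\mu$ because $h(\mu') \le h(X) < \infty$ and $f$ is bounded.

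With these in hand, set $M := \sup_{\mu' \in S_\mu} P(\mu')$, which is finite. Since $P$ is upper semicontinuous and $S_\mu$ is compact, the supremum is attained, so $S_{\mu,f}$ is non-empty and equals $\{\mu'' \in S_\mu : P(\mu'') = M\} = \{P \ge M\} \cap S_\mu$. The set $\{P \ge M\}$ is closed by upper semicontinuity, so its intersection with the compact set $S_\mu$ is compact; this yields compactness of $S_{\mu,f}$. For convexity, if $\mu_1'', \mu_2'' \in S_{\mu,f}$ and $t \in [0,1]$, then $t\mu_1'' + (1-t)\mu_2'' \in S_\mu$ since $S_\mu$ is convex, and by affinity of $P$ we get $P(t\mu_1'' + (1-t)\mu_2'') = tM + (1-t)M = M$, so the convex combination again maximizes $P$ and lies in $S_{\mu,f}$.

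The only step requiring genuine input beyond soft functional analysis is the upper semicontinuity of the entropy map, and the main thing to verify is that it applies on the relevant domain; but this is a standard consequence of the expansiveness of subshifts of finite type, so the proof reduces to the extreme value principle for upper semicontinuous functions on compact sets together with the affinity of the pressure functional.
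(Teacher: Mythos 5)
Your proof is correct and takes essentially the same approach as the paper: the paper's own argument is exactly that the map $\mu'' \mapsto h(\mu'') + \int f\, d\mu''$ is upper semicontinuous and affine on the compact convex set $S_\mu$, giving non-emptiness, compactness, and convexity of the set of maximizers. You have merely spelled out the standard supporting facts (expansiveness of SFTs for upper semicontinuity of entropy, the extreme value principle, closedness of $\{P \ge M\}$) that the paper leaves implicit.
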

\begin{proof}
  Since the map $$\mu'' \mapsto h(\mu'') + \int f d\mu''$$
  is upper semi-continuous and $S_\mu$ compact, the set $S_{\mu,f}$ is non-empty. It is convex because the map is affine. It is compact because the map is upper semi-continuous.
\end{proof}

We suspect that the set $S_{\mu,f}$ consists of just one measure whenever $f$ is regular enough. The result $\#S_{\mu,f} \ge 1$ (positive answer to Question B' for all continuous $f$) is established here by relying on class degree joining. The dual result $\#S_{\mu,f} \le 1$ (positive answer to Question A') for $f$ with summable variation is a consequence of a lemma in \cite{yoo2014releqclass}.

We will say that an ergodic measure $\mu$ on $X$ is \emph{class maximal} if $\mu \in S_{\mu,0}$. An ergodic lift $\mu$ of an ergodic measure $\nu$ on $Y$ is class maximal if and only if it is maximal within its equivalence class $[\mu]$ (the set of all ergodic lifts of $\nu$ parallel to $\mu$), by the following lemma. In general, a class maximal measure is not a relatively maximal measure. 

\begin{lemma}
  Given an ergodic measure $\mu$ on $X$ and $\mu'$ an invariant measure on $X$ with ergodic decomposition $\mu' = \int \mu'' d\rho(\mu'')$. Then $\pi\mu = \pi\mu'$ if and only if $\pi\mu = \pi\mu''$ for $\rho$-a.e. $\mu''$. And $\mu \sim \mu'$ if and only if $\mu \sim \mu''$ for $\rho$-a.e. $\mu''$.
\end{lemma}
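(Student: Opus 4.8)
The plan is to treat the two biconditionals separately, and in each to play the ergodic decomposition $\mu' = \int \mu''\,d\rho(\mu'')$ against the disintegration over $Y$. For the first biconditional the ``if'' direction is immediate from linearity of the pushforward: if $\pi\mu'' = \pi\mu$ for $\rho$-a.e.\ $\mu''$ then $\pi\mu' = \int \pi\mu''\,d\rho(\mu'') = \pi\mu$. For the ``only if'' direction I would use that $\pi$ carries ergodic measures to ergodic measures (the preimage of an $S$-invariant set is $T$-invariant), so each $\pi\mu''$ is ergodic and $\pi\mu = \pi\mu' = \int \pi\mu''\,d\rho(\mu'')$ exhibits the ergodic measure $\pi\mu$ as an average of ergodic measures; by uniqueness of the ergodic decomposition the representing measure of $\pi\mu$ is the point mass at $\pi\mu$, forcing $\pi\mu'' = \pi\mu$ for $\rho$-a.e.\ $\mu''$ (equivalently, integrate the indicator of the set of generic points of $\pi\mu$).

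For the forward direction of the second biconditional, suppose $\mu\sim\mu'$ and let $\lambda$ be a relative joining of $\mu$ and $\mu'$ with $\lambda(C_\pi)=1$. I would take its ergodic decomposition $\lambda = \int \xi\,d\sigma(\xi)$. Since $C_\pi$ is invariant, $\xi(C_\pi)=1$ for $\sigma$-a.e.\ $\xi$; since $p_1\lambda = \mu$ is ergodic and each $p_1\xi$ is ergodic, the argument of the previous paragraph gives $p_1\xi = \mu$ for $\sigma$-a.e.\ $\xi$; and since $p_2\lambda = \mu'$, the family $\{p_2\xi\}$ with law $\sigma$ is an ergodic decomposition of $\mu'$, so by uniqueness the law of $p_2\xi$ under $\sigma$ is exactly $\rho$. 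Hence for $\sigma$-a.e.\ $\xi$ the measure $\xi$ is itself a relative joining of $\mu = p_1\xi$ and $p_2\xi$ with full mass on $C_\pi$, i.e.\ $\mu \sim p_2\xi$; transporting this statement along $\xi \mapsto p_2\xi$ yields $\mu\sim\mu''$ for $\rho$-a.e.\ $\mu''$.

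For the reverse direction I would pass to the disintegration over $Y$. Writing $\nu=\pi\mu$ and $\mu=\int_Y\mu_y\,d\nu(y)$, and similarly for $\mu'$ and each $\mu''$, one checks from the defining property of the disintegration that $\mu'_y = \int \mu''_y\,d\rho(\mu'')$ for $\nu$-a.e.\ $y$. The key elementary fact is that two measures with common image $\nu$ are class parallel if and only if, for $\nu$-a.e.\ $y$, their conditional measures assign equal mass to every transition class over $y$: a relative joining with full mass on $C_\pi$ disintegrates into couplings supported on the diagonal blocks $\bigcup_i([x_i]\times[x_i])$, which forces the masses to agree, and conversely equal masses let one build such a coupling $\lambda_y$ on each fiber and integrate. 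Granting $\mu\sim\mu''$ for $\rho$-a.e.\ $\mu''$, this criterion gives, for $\rho$-a.e.\ $\mu''$ and then (by Fubini) for $\nu$-a.e.\ $y$ and $\rho$-a.e.\ $\mu''$, the equality $\mu''_y([x])=\mu_y([x])$ on every transition class; integrating in $\mu''$ against $\rho$ and using the displayed relation gives $\mu'_y([x])=\mu_y([x])$ for $\nu$-a.e.\ $y$, whence $\mu\sim\mu'$ by the criterion again.

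The main obstacle is not these measure-theoretic identities but the measurability bookkeeping they rest on: one must know that the partition of $\pi^{-1}(y)$ into transition classes, and hence the masses $\mu_y([x])$, depend measurably on $y$ (and, in the reverse direction, on the component $\mu''$), and that the fiberwise couplings $y\mapsto\lambda_y$ can be chosen measurably so that $\int_Y\lambda_y\,d\nu(y)$ is a genuine relative joining. These are exactly the disintegration-and-transition-class facts already used in the preceding sections, so I would invoke them rather than reprove them; with them in hand both constructions go through.
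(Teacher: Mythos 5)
Your proposal is correct, and its first half coincides with the paper's proof: the pushforward biconditional is handled exactly as in the paper (linearity plus uniqueness of the ergodic decomposition of the ergodic measure $\pi\mu$), and your forward direction for $\sim$ is the paper's argument too --- take the ergodic decomposition $\lambda = \int \xi\, d\sigma(\xi)$ of a realizing joining, use invariance of $C_\pi$ to get $\xi(C_\pi)=1$ a.e., note $p_1\xi = \mu$ a.e.\ and that $\sigma$ pushes forward to $\rho$ under $\xi \mapsto p_2\xi$ (you are in fact slightly more careful than the paper, which leaves $p_1\xi=\mu$ implicit). Where you genuinely diverge is the reverse direction for $\sim$. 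The paper does this in one line: choose for $\rho$-a.e.\ $\mu''$ a joining $\lambda''_{\mu''}$ realizing $\mu \sim \mu''$ and integrate, so that $\int \lambda''_{\mu''}\, d\rho(\mu'')$ realizes $\mu \sim \mu'$; this is shorter but silently requires the family $\mu'' \mapsto \lambda''_{\mu''}$ to be chosen measurably in $\mu''$, a selection issue the paper does not address. You instead prove and use a quantitative criterion: for $\mu$ ergodic and $\mu'$ invariant with the same image, $\mu \sim \mu'$ iff the conditional measures assign \emph{equal} mass to each transition class for $\nu$-a.e.\ $y$. This is a genuine sharpening of the paper's conditions (4)/(5), which speak only of which classes carry positive mass; your coupling argument (a joining carried by $\bigcup_i [x_i]\times[x_i]$ forces the block masses of the two margins to agree) is valid without ergodicity of $\mu'$, and the converse via the canonical within-class relatively independent coupling avoids any selection of joinings. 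The price is the fiberwise bookkeeping you flag: measurability of $y \mapsto \mu_y([x])$, the Fubini exchange, and --- the one point not literally recycled from earlier sections --- the check that $\int_Y \lambda_y\, d\nu(y)$ is shift-invariant, which follows from equivariance of disintegrations and of the transition-class partition ($\sigma$ maps classes over $y$ to classes over $\sigma y$). Net effect: the paper's route is shorter but hides a measurable-selection gap; yours is longer but canonical, and the equal-mass characterization it establishes is of independent interest (it is essentially the fiberwise statement proved later via the combinatorial lemma on $\{1,\dots,c\}^c$).
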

\begin{proof}
  $$\pi\mu' = \int \pi\mu'' d\rho(\mu'')$$
  If $\pi\mu = \pi\mu'$, then the above equation is an ergodic decomposition of an ergodic measure, and hence $\pi\mu = \pi\mu' = \pi\mu''$ for $\rho$-a.e. $\mu''$. Its converse follows similarly.

  Suppose $\mu \sim \mu'$. Let $\lambda'$ be a joining that realizes the class parallel relation between $\mu$ and $\mu'$. Since $\lambda'(C_\pi)=1$, if $\lambda' = \int \lambda'' d\tau(\lambda'')$ is an ergodic decomposition of $\lambda'$, then $\lambda''(C_\pi) = 1$ holds for $\tau$-a.e. $\lambda''$. By projecting to the second coordinate, we get $\mu' = \int p_2 \lambda'' d\tau(\lambda'')$ which is an ergodic decomposition of $\mu'$. The joining $\lambda''$ realizes the class parallel relation between $\mu$ and $p_2 \lambda''$ and hence $\mu \sim p_2 \lambda''$ for $\tau$-a.e. $\lambda''$. Therefore $\mu \sim \mu''$ for $\rho$-a.e. $\mu''$.

  Conversely, suppose $\mu \sim \mu''$ for $\rho$-a.e. $\mu''$ and let $\lambda''_{\mu''}$ be a joining that realizes this relation for $\rho$-a.e. $\mu''$. Then $\int \lambda''_{\mu''} d\rho(\mu'')$ realizes the class parallel relation between $\mu$ and $\mu'$.
\end{proof}

\begin{theorem}
  Let $\mu_1,\dots,\mu_k$ be all the class maximal measures among the ergodic lifts of an ergodic measure $\nu$ on $Y$. Let $c = c_{\nu}$ and let $m_1,\dots,m_k$ be class multiplicities of the $k$ measures. Then
  $$c = \sum_{i=1}^k m_i$$
\end{theorem}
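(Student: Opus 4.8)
The plan is to reduce this statement to the identity established earlier, namely that the class degree $c_{\pi,\nu}$ equals the sum of the class multiplicities $m_\pi([\mu_i])$ taken over all class parallel equivalence classes $[\mu_i]$ of ergodic lifts of $\nu$. The bridge between that identity and the present theorem is a bijection between the class maximal measures $\mu_1,\dots,\mu_k$ and these equivalence classes. Since class multiplicity is constant on each class parallel equivalence class (as already noted), once such a bijection is in place we have $m_i = m_\pi([\mu_i])$ after matching indices, and the two sums coincide term by term, giving $c = \sum_{i=1}^k m_i$ at once.

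To build the bijection I would first show that every class parallel equivalence class of ergodic lifts contains at least one class maximal measure. Fix an ergodic lift $\mu$ in such a class. By the theorem asserting that $S_{\mu,f}$ is non-empty (with $f=0$), there is a measure $\mu^*$ maximizing entropy over the compact set $S_\mu$. Writing the ergodic decomposition $\mu^* = \int \mu'' \, d\rho(\mu'')$ and applying the lemma that the class parallel relation passes to ergodic components, $\rho$-a.e. $\mu''$ lies in $S_\mu$. Since entropy is affine and each such $\mu''$ satisfies $h(\mu'') \le h(\mu^*)$, we must have $h(\mu'') = h(\mu^*)$ for $\rho$-a.e. $\mu''$. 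Any such $\mu''$ is then an ergodic lift of $\nu$ maximizing entropy within its equivalence class, hence class maximal by the characterization of class maximality, and it lies in the original equivalence class. This secures existence.

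Next I would prove that each equivalence class contains at most one class maximal measure. Suppose $\mu_a \sim \mu_b$ are both class maximal. Because being class parallel is an equivalence relation, $S_{\mu_a} = S_{\mu_b}$, so $\mu_a$ and $\mu_b$ both maximize entropy over this common compact set and hence both belong to $S_{\mu_a,0}$. Invoking the dual result $\#S_{\mu,f} \le 1$ of Question A' with $f=0$ — legitimate since the zero function has summable variation — forces $\mu_a = \mu_b$. Combined with existence, this shows that the class maximal measures are exactly one per equivalence class, so $k$ coincides with the number of equivalence classes and, after matching indices, $m_i = m_\pi([\mu_i])$; the earlier identity then yields $c = \sum_{i=1}^k m_i$.

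I expect the main obstacle to be the uniqueness step, since it is the one place the argument leaves the self-contained framework developed here and relies on the external bound $\#S_{\mu,0} \le 1$ furnished by Question A'. Everything else — the existence of a class maximal representative via compactness of $S_\mu$ together with affineness of entropy, and the final bookkeeping with the class multiplicity identity — is routine given the machinery already in place.
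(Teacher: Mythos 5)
Your proposal is correct and follows essentially the same route as the paper: the paper's own (very terse) proof asserts precisely that each class parallel equivalence class contains exactly one class maximal measure and then invokes the identity $c_{\pi,\nu}=\sum_i m_\pi([\mu_i])$, which is exactly the bijection-plus-bookkeeping you carry out. Your write-up simply makes explicit the two ingredients the paper leaves implicit — existence via non-emptiness of $S_{\mu,0}$, affineness of entropy, and the ergodic decomposition lemma, and uniqueness via the external bound $\#S_{\mu,0}\le 1$ from the cited summable-variation result — both of which are the intended justifications in the paper's surrounding discussion.
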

\begin{proof}
  Each $\mu_i$ is the unique class maximal measure in each equivalence class. The conclusion follows.
\end{proof}
A similar result holds for measures maximizing $h(\mu) + \int f d\mu$ in their equivalence classes, as long as $f$ is a function for which $\#S_{\mu,f} \le 1$ is proved.

% Let $f$ be such a function. Let $\mu'_1,\dots,\mu'_k$ be ... ordered in such a way to.... then h(\mu_1)
% f-dominating transition classes vs?

% \section{Questions}

% a pointwise result. maybe non-homogeneous product. 
% for almost all y, transition classes are typical in some sense, maybe after discarding transient points?
% what happens as nu is varying

% better way to select transition classes.

% general non-dual result

% different kind of equivalence classes

% \begin{question}
  
% \end{question}

% My question is 
% How to define class degree and class multiplicity for general system? that is, can the theory be repeated without symbolic tricks?

% 

\bibliographystyle{amsplain}
\bibliography{my-reference}
\end{document}